\numberwithin{equation}{section}
\newtheorem{proposition}{Proposition}[section]
\newtheorem{lemma}[proposition]{Lemma}
\newtheorem{theorem}[proposition]{Theorem}
\newtheorem{corollary}[proposition]{Corollary}
\theoremstyle{definition}
\newtheorem{remark}[proposition]{Remark}
\newtheorem{definition}[proposition]{Definition}
\DeclareMathOperator{\Aut}{Aut}
\DeclareMathOperator{\diag}{Diag} 
\newcommand{\R}{\mathbb{R}}
\newcommand{\Z}{\mathbb{Z}}
\newcommand{\Q}{\mathbb{Q}}
\newcommand{\pr}{\mathbb{P}}
\renewcommand{\epsilon}{\varepsilon}
\renewcommand{\phi}{\varphi}
\title{GIT stability of divisors in products of projective spaces}
\author[I. Karagiorgis]{Ioannis Karagiorgis}
\address{School of Mathematics and Statistics, University of Glasgow, University Place, Glasgow, G12 8QQ, United Kingdom}
\email{Ioannis.Karagiorgis@glasgow.ac.uk}
\author[T. A. Ortscheidt]{Theresa A. Ortscheidt}
\address{School of Mathematics and Statistics, University of Glasgow, University Place, Glasgow, G12 8QQ, United Kingdom}
\email{Theresa.Ortscheidt@glasgow.ac.uk}
\author[T. S. Papazachariou]{Theodoros S. Papazachariou}
\address{Isaac Newton Institute, University of Cambridge, 20 Clarkson rd, Cambridge, CB3 0EH, United Kingdom}
\email{tsp35@cam.ac.uk}
\begin{document}

\maketitle

\begin{abstract}
    We study GIT stability of divisors in products of projective spaces. We first construct a finite set of one-parameter subgroups sufficient to determine the stability of the GIT quotient. In addition, we characterise all maximal orbits of not stable and strictly semistable pairs, as well as minimal closed orbits of strictly semistable pairs. This characterisation is applied to classify the GIT quotient of threefolds of bidegree (1,2) and bidegree (4,4) curves in a quadric surface, via singularities, which are in turn used to obtain an explicit description of the K-moduli space of family 2.25 of Fano threefolds, and the K-moduli wall-crossing of log Fano pairs.
\end{abstract}

\section{Introduction} \label{sec:intro}

Geometric invariant theory (GIT) is one of the primary tools in constructing moduli spaces, a problem which has been central in algebraic geometry in the last century. It has seen prominent use in the study of hypersurfaces \cite{Mumford_1977, Allcock_2002, Laza_2008}, complete intersections \cite{Avritzer_Lange_2000, hattori2022git} and divisors in smooth quadric surfaces \cite{Shah_1981, fedorchuk, Laza_OGrady_2018}. More recently, it has found great success in describing K-moduli spaces \cite{odaka_spotti_sun_2016, liu-xu,pap22} and K-moduli wall-crossing \cite{ascher2019wall, ascher2020kmoduli, Gallardo_2020, pap22}. Although GIT is a key tool for describing moduli spaces, explicit descriptions of GIT quotients remain hard, especially in higher dimensions. Recent advances have approached the GIT constructions computationally, developing mathematical algorithms and computer code to describe unstable elements in GIT quotients \cite{Gallardo_2018, bata458, pap22, theodoros_stylianos_papazachariou_2022, gallardo2023computation, gallardo_code1, gallardo_code2}. 

In this paper, we consider GIT quotients parametrising divisors of bidegree $(k,l)$ in products of projective space $\pr^m \times \pr^n$. Our framework can be automatised to perform computations for any pair $(m,n)$ and bidegree $(k,l)$. All the invariants and functions in this article have been implemented in software \cite{our_code}. Furthermore, this setting can be naturally extended to  divisors in arbitrary products of projective spaces.

Let $G \coloneqq \operatorname{SL}(m+1) \times \operatorname{SL}(n+1)$ and let $V_{k,l} \coloneqq |\mathcal{O}_{\pr^m \times \pr^n}(k,l)|$ be the linear system of divisors of bidegree $(k,l)$ in $\pr^m \times \pr^n$, which we will denote by $X_{k,l}$. We will study the GIT quotient $\pr V^* \sslash G$ computationally. Our first main result is that there exists a finite set $S_{m,n}^{k,l}$ of one-parameter subgroups of $G$, which is described combinatorially and can be computed via software, that determines the instability of any divisor. In particular:

\begin{theorem}[see Theorem \ref{thm: fundamental set}]
    A divisor $X_{k,l}$ is not stable if and only if there exist $g \in G, \lambda \in S_{m,n}^{k,l}$ destabilising $X_{k,l}$.
\end{theorem}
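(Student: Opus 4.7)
The plan is to apply the Hilbert–Mumford numerical criterion, reduce to one-parameter subgroups of a maximal torus up to conjugation and Weyl symmetry, and then extract a finite combinatorial set via a polyhedral argument on the destabilizing cones.

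First, by the Hilbert–Mumford criterion, $X_{k,l}$ is not stable if and only if some 1-PS $\lambda$ of $G$ satisfies $\mu(X_{k,l}, \lambda) \geq 0$. Since every 1-PS of $G$ is conjugate into the standard maximal torus $T \subset G$ of diagonal elements, after replacing $X_{k,l}$ by $g \cdot X_{k,l}$ for some $g \in G$ I may assume $\lambda$ takes values in $T$, and is therefore given by a pair of integer weight vectors $r = (r_0,\dots,r_m)$ and $s = (s_0,\dots,s_n)$ with $\sum_i r_i = \sum_j s_j = 0$. Absorbing a further permutation matrix from the Weyl group $S_{m+1}\times S_{n+1}$ into $g$, I may normalize so that $r_0 \geq \dots \geq r_m$ and $s_0 \geq \dots \geq s_n$.

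Second, I would compute the Hilbert–Mumford weight explicitly using the monomial eigenbasis: the monomials $x^a y^b$ with $|a|=k$ and $|b|=l$ diagonalize $T$ with weight $\langle a,r\rangle + \langle b,s\rangle$, so writing a lift of $X_{k,l}$ as $\sum c_{a,b} x^a y^b$ and letting $M(X_{k,l}) = \{(a,b) : c_{a,b}\neq 0\}$ be its support yields
\[
\mu(X_{k,l},\lambda) = -\min_{(a,b)\in M(X_{k,l})}\bigl(\langle a,r\rangle+\langle b,s\rangle\bigr).
\]
Thus a normalized $\lambda$ destabilizes $X_{k,l}$ precisely when every monomial in $M(X_{k,l})$ has $\lambda$-weight $\leq 0$.

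Third, I would carry out a polyhedral reduction. For each subset $M$ of the finite set of bidegree $(k,l)$ exponent pairs, the set
\[
C(M) = \{(r,s) : \langle a,r\rangle+\langle b,s\rangle \leq 0\ \forall (a,b)\in M;\ \textstyle\sum r_i = \sum s_j = 0;\ r_0\geq\dots\geq r_m;\ s_0\geq\dots\geq s_n\}
\]
is a rational polyhedral cone. Any $\lambda \in C(M)$ lies in the non-negative span of primitive generators of the extremal rays of the minimal face of $C(M)$ containing $\lambda$, and each such generator still satisfies the defining inequalities of $C(M)$ and hence destabilizes every divisor with support contained in $M$. Setting
\[
S_{m,n}^{k,l} = \bigcup_{M} \{\text{primitive generators of extremal rays of } C(M)\}
\]
and applying the reduction to $M = M(X_{k,l})$ produces an element of $S_{m,n}^{k,l}$ destabilizing $X_{k,l}$, while finiteness is immediate from finiteness of the exponent set and of the extremal rays of each rational cone.

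The main obstacle is the polyhedral step: one must verify that the extremal rays of $C(M)$ yield genuine integer normalized 1-PSs of $G$ (which follows from rationality of the cone) and, more substantively, that the combinatorial enumeration can be carried out efficiently enough to produce the explicit finite set implemented in \cite{our_code}; in practice one would restrict to inclusion-maximal destabilizing supports $M$ to avoid redundancy. The converse implication, that any $\lambda \in S_{m,n}^{k,l}$ destabilizing $X_{k,l}$ certifies non-stability, is immediate from Hilbert–Mumford.
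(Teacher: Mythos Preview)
Your argument is essentially correct and follows a genuinely different polyhedral route from the paper's. The paper does not index by monomial supports $M$ and does not use extremal rays of the cones $C(M)$. Instead, it slices the normalized Weyl chamber to a compact polytope $\Delta$ (by imposing $\rho_0\le 1$, $\sigma_0\le 1$), observes that $\mu(X_{k,l},-)$ is piecewise linear on $\Delta$, and argues that it suffices to test $\lambda$ at the finitely many vertices of the polyhedral subdivision of $\Delta$ induced by the hyperplane arrangement $\{\langle I-I',\lambda\rangle=0\}$. This directly produces the explicit equations $\operatorname{Eq}_{m,n}^{k,l}$ of Definition~\ref{defn: new fundamental set}, and hence a set immediately amenable to computer enumeration. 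Your approach is cleaner conceptually (standard cone-generator reduction) but yields a set defined by a union over all $M$, which a priori need not coincide with the paper's $S_{m,n}^{k,l}$; since the theorem as stated refers to that specific set, strictly speaking you have proved the existence of \emph{some} finite testing set rather than the one in Definition~\ref{defn: new fundamental set}.

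Two minor points. First, there is a sign slip in your second step: with your formula $\mu=-\min$, the condition $\mu\ge 0$ reads ``some monomial has weight $\le 0$'', not ``every monomial has weight $\le 0$''; in the paper's convention the destabilizing condition is that all monomial weights are $\ge 0$, so your $C(M)$ should use $\ge 0$ (or equivalently you are working with $\lambda^{-1}$). This does not affect the structure of the argument. Second, for the extremal-ray step you implicitly use that $C(M)$ is pointed, which holds because it sits inside the closed Weyl chamber of $T$; it is worth saying this explicitly, since otherwise a cone containing a line has no extremal rays and the reduction would fail.
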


After we fix a coordinate system, each divisor $X_{k,l}$ can be determined by a homogeneous polynomial $F$ of degrees $k$ and $l$ in the coordinates of $\pr^m$ and $\pr^n$, respectively. This defines a set of monomials, namely those which appear with non-zero coefficients in $F$. If a divisor $X_{k,l}$ is not stable, then we can find a set of monomials $N^{\oplus}(\lambda)$ such that, in some coordinate system, the equation of $F$ is given by monomials in $N^{\oplus}(\lambda)$. A similar procedure follows for unstable divisors, where the relevant set of monomials is $N^{+}(\lambda)$. In more detail:

\begin{theorem}[see Theorem \ref{thm: main for divisors}]\label{thm:intro 2}
    A divisor $X_{k,l}$ is not stable (respectively unstable) if and only if there exists $g \in G$ such that the associated set of monomials of $g \cdot X_{k,l}$ is contained in $N^{\oplus}(\lambda)$ (respectively $N^+(\lambda)$) for some $\lambda \in S_{m,n}^{k,l}$.
\end{theorem}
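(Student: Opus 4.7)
The plan is to translate the Hilbert--Mumford numerical criterion into the combinatorial language of monomial supports, using Theorem \ref{thm: fundamental set} to restrict the one-parameter subgroups to the finite set $S_{m,n}^{k,l}$.

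First, I would fix notation. Identifying $V_{k,l}$ with its monomial basis of bidegree $(k,l)$, write $F = \sum_{I,J} a_{I,J}\, x^I y^J$ for the defining polynomial of $X_{k,l}$, so that $[F] \in \pr V_{k,l}^*$. A 1-PS $\lambda$ of the standard maximal torus $T \subset G$ is specified by weight tuples $(r_0, \dots, r_m)$ and $(s_0, \dots, s_n)$ with $\sum_i r_i = \sum_j s_j = 0$, and acts on each monomial with weight $w_\lambda(x^I y^J) = \sum_i r_i I_i + \sum_j s_j J_j$. The sets $N^\oplus(\lambda)$ and $N^+(\lambda)$ are, by definition, the sets of bidegree $(k,l)$ monomials having non-negative and strictly positive $\lambda$-weight, respectively.

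Second, I would compute the Mumford weight of $[F]$ under $\lambda$ for the canonical linearisation on $\pr V_{k,l}^*$. Since $\lambda$ acts diagonally in the monomial basis, $\lim_{t \to 0} \lambda(t)\cdot [F]$ is the projective class of the subsum over monomials achieving the extremal $\lambda$-weight, and $\mu([F], \lambda)$ is precisely this extremal weight, up to the standard sign coming from the dualisation $V \mapsto V^*$. A direct check then shows that $[F]$ fails to be stable (respectively is unstable) with respect to $\lambda$ if and only if every monomial appearing in $F$ lies in $N^\oplus(\lambda)$ (respectively $N^+(\lambda)$).

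Third, I would combine this with the Hilbert--Mumford criterion and Theorem \ref{thm: fundamental set}. By Hilbert--Mumford, $X_{k,l}$ is not stable (resp. unstable) if and only if there exists a 1-PS of $G$ destabilising (resp. strictly destabilising) it; by Theorem \ref{thm: fundamental set}, this 1-PS can be taken, after replacing $F$ by $g\cdot F$ for some $g\in G$, to lie in $S_{m,n}^{k,l}$. Plugging in the second step yields exactly the monomial characterisation claimed, in both the not-stable and the unstable case.

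The main obstacle is not the logical structure but the careful sign bookkeeping: fixing the convention for the linearisation on $\pr V_{k,l}^*$ (as opposed to $\pr V_{k,l}$) so that the sets $N^\oplus$ and $N^+$, which are stated in purely combinatorial monomial terms, line up with the Mumford weight inequalities produced by the numerical criterion. A secondary point, purely combinatorial, is verifying that Theorem \ref{thm: fundamental set} applies in the unstable case with the same finite set $S_{m,n}^{k,l}$; this follows from the general principle that the same fundamental system of 1-PS simultaneously detects failure of stability and of semistability.
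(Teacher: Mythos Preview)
Your proposal is correct and follows essentially the same route as the paper: invoke Theorem~\ref{thm: fundamental set} to reduce to $\lambda\in S_{m,n}^{k,l}$, then observe that $\mu(g\cdot X_{k,l},\lambda)\ge 0$ (resp.\ $>0$) is equivalent, by the definition $\mu=\min_{x^I\in\operatorname{Supp}}\langle I,\lambda\rangle$, to $\operatorname{Supp}(g\cdot X_{k,l})\subset N^{\oplus}(\lambda)$ (resp.\ $N^+(\lambda)$). Your worries are unfounded: the sign convention is already fixed by the paper's explicit definition of $\mu$ in the preliminaries, and Theorem~\ref{thm: fundamental set} is stated for both the not-stable and unstable cases simultaneously, so no extra argument is needed for the strict inequality.
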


Using the above results, we can determine whether a divisor $X_{k,l}$ is unstable or not stable. To determine whether the divisor is strictly semistable or stable, we use the Centroid Criterion. This is a polyhedral criterion that allows us to determine the stability of $X_{k,l}$, relying on the fact that each divisor $X_{k,l}$ defines a convex polytope $\operatorname{Conv}(X_{k,l})$. It stems from \cite[\S 7.2]{Mukai_2003} and \cite{Gallardo_2018}, which  used this criterion to study GIT stability of divisors in a single projective space. The theoretical framework used here extends this criterion to products of projective space. We also define a specific point $\mathcal{O}_{k,l}$, called the \emph{centroid} (for more details see Section \ref{sec:centroid}).

\begin{theorem}[see Theorem \ref{thm:centroid}]\label{thm: intro 3}
    A divisor $X_{k,l}$ is semistable (respectively stable) if and only if $\mathcal{O}_{k,l} \in \operatorname{Conv}(X_{k,l})$ (respectively $\mathcal{O}_{k,l} \in \operatorname{Int}(\operatorname{Conv}(C_{k,l}))$).
\end{theorem}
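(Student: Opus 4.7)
The plan is to combine the Hilbert--Mumford numerical criterion with a separating hyperplane argument in the weight space of the maximal torus of $G$, thereby extending the classical Mukai centroid criterion for a single $\SL$-factor \cite[\S 7.2]{Mukai_2003} to the product group $G = \SL(m+1) \times \SL(n+1)$. I fix the diagonal maximal torus $T \subset G$, whose cocharacter lattice consists of pairs of integer vectors $(r_0,\dots,r_m;\, s_0,\dots,s_n)$ satisfying $\sum r_i = \sum s_j = 0$. Under a one-parameter subgroup $\lambda$ of $T$, the monomial $x^\alpha y^\beta$ with $|\alpha|=k$ and $|\beta|=l$ has weight $\langle \alpha, r\rangle + \langle \beta, s\rangle$. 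The centroid $\mathcal{O}_{k,l}$ is the unique $G$-invariant point, obtained by averaging the weights of all bidegree-$(k,l)$ monomials, and it pairs trivially with every traceless cocharacter of $T$.

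The core of the proof is the convex-geometric translation of numerical instability. In the sign convention of Section \ref{sec:centroid}, the Hilbert--Mumford weight can be written as
\[
\mu(F,\lambda) \;=\; \max_{I \in \operatorname{supp}(F)} \langle I - \mathcal{O}_{k,l},\, \lambda \rangle,
\]
so $F$ is $T$-unstable precisely when some nonzero traceless $\lambda$ satisfies $\langle I - \mathcal{O}_{k,l}, \lambda\rangle > 0$ for every $I \in \operatorname{supp}(F)$. By the separating hyperplane theorem applied inside the traceless hyperplane of characters, this is equivalent to $\mathcal{O}_{k,l} \notin \operatorname{Conv}(X_{k,l})$, and the stable analogue follows by replacing weak separation with strict separation, giving $\mathcal{O}_{k,l} \in \operatorname{Int}(\operatorname{Conv}(X_{k,l}))$.

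To pass from $T$-(semi)stability to $G$-(semi)stability, I would invoke Theorem \ref{thm: fundamental set} and Theorem \ref{thm: main for divisors}: any destabilising one-parameter subgroup of $G$ can, after conjugation by some $g \in G$, be taken to lie in the finite combinatorial set $S_{m,n}^{k,l}$ of cocharacters of the fixed torus $T$. Combined with the previous step, this yields the centroid characterisation in the stated form. The main obstacle I expect is bookkeeping in the traceless hyperplane: one must verify carefully that $\mathcal{O}_{k,l}$ is exactly the point annihilated by all traceless characters of $T$ (so that no information is lost by restricting the separating functional to that subspace), and that when the Hilbert--Mumford quantifier over $g \in G$ is absorbed via Theorem \ref{thm: main for divisors}, the polytope $\operatorname{Conv}(X_{k,l})$ is being interpreted for a representative in the right normal form. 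Once these linear-algebra facts are in hand, the convex-geometric step is essentially formal.
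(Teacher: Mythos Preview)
Your core convex-geometric step --- separating hyperplane $\Leftrightarrow$ existence of a destabilising diagonal one-parameter subgroup --- is exactly what the paper does, and the paper's proof is self-contained: it does \emph{not} invoke Theorem~\ref{thm: fundamental set} or Theorem~\ref{thm: main for divisors}, only the Hilbert--Mumford criterion for one-parameter subgroups of the fixed diagonal torus $T$. Consequently the criterion actually proved is for (semi)stability with respect to $T$ in the given coordinates, which is precisely how the paper uses it (on families $N^{\oplus}(\lambda)$ already put into normal form). A minor inconsistency on your side: your displayed formula has $\max$, but both the paper's definition of $\mu$ and your own next sentence (``$\langle I - \mathcal{O}_{k,l},\lambda\rangle > 0$ for every $I$'') require $\min$.

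Your proposed passage from $T$- to $G$-(semi)stability via Theorems~\ref{thm: fundamental set} and~\ref{thm: main for divisors} is a genuine gap, not bookkeeping. Those theorems carry the quantifier ``there exists $g \in G$'' on the destabilising side, so on the semistable side you would obtain ``$\mathcal{O}_{k,l} \in \operatorname{Conv}(g \cdot X_{k,l})$ for \emph{all} $g$'', not the stated condition for $X_{k,l}$ itself. Concretely: take any strictly $G$-polystable divisor (e.g.\ the toric $(1,2)$-divisor of Proposition~\ref{polystable 1,2 divisors}) and apply a generic $g \in G$; then $\operatorname{Supp}(g \cdot X) = \Xi_{k,l}$, the centroid lies in the interior of the full polytope, yet $g \cdot X$ is not $G$-stable. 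So no combination of Theorems~\ref{thm: fundamental set} and~\ref{thm: main for divisors} can absorb the $g$-quantifier. Your own caveat that the polytope must be ``interpreted for a representative in the right normal form'' is exactly right --- it is the content of the statement, not an obstacle to be argued away.
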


Determining strictly semistable divisors also allows us to determine the potential closed orbits associated with that divisor, and the GIT polystable divisors of bidegree $(k,l)$.

We should note that the above results naturally extend to GIT problems of divisors of bidegree $(k_1,\dots, k_l)$ in products of projective spaces $\mathbb{P}^{m_1}\times \dots \mathbb{P}^{m_l}$. Theorems \ref{thm:intro}, \ref{thm:intro 2} and \ref{thm: intro 3} generalise completely using the methods of proof presented in this paper. We choose to omit these descriptions and proofs in order to ease notation and improve readability.

We apply the above results in two different settings. First, we use the algorithm detailed in Section \ref{sec: computer implementation} and the implemented software to classify the GIT quotient of $(4,4)$-curves in $\pr^1 \times \pr^1$. Using our algorithm, we recover successfully previous results on the classification of semistable, polystable and stable orbits in this GIT quotient, due to Shah \cite[\S 4]{Shah_1981} and Laza--O'Grady \cite[Lemma 3.2, Proposition 3.3]{Laza_OGrady_2018}.

\begin{theorem}[see Theorem \ref{stable 4,4 curves}]
    Let $C$ be a $(4,4)$-curve in $\pr^1 \times \pr^1$. Then $C$ is stable if and only if one of the following holds:
    \begin{enumerate}
        \item $C$ is an irreducible, possibly singular curve, with singularities better than $\mathbf{X}_9$.
        \item $C$ is reducible, containing a $(1,0)$-ruling, and the residual $(3,4)$-curve $C'$ intersects the ruling in a unique point with multiplicity $\leq 3$ that is also a singular point of $C$, with singularities better than $\mathbf{J}_{2,0}$.
    \end{enumerate}
\end{theorem}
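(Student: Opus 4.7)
The plan is to specialise the general framework of this paper to $m=n=1$, $k=l=4$. By Theorem~\ref{thm: fundamental set} the instability of a $(4,4)$-curve $C\subset\pr^1\times\pr^1$ is detected by the finite, explicitly computable set $S_{1,1}^{4,4}$ of one-parameter subgroups, and by Theorem~\ref{thm: main for divisors} non-stability is equivalent to the existence of $g\in G$ and $\lambda\in S_{1,1}^{4,4}$ such that the monomial support of the equation of $g\cdot C$ is contained in $N^{\oplus}(\lambda)$. The first step is to run the implementation \cite{our_code} of the algorithm of Section~\ref{sec: computer implementation} in order to enumerate $S_{1,1}^{4,4}$ and to read off the corresponding maximal monomial sets $N^{\oplus}(\lambda)$ and $N^{+}(\lambda)$.

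The second step is to interpret each such monomial set geometrically. A monomial $x_0^{a}x_1^{4-a}y_0^{b}y_1^{4-b}$ contributes to the defining polynomial $F$ with prescribed vanishing orders along the rulings $\{x_1=0\}$, $\{y_1=0\}$ and at the point $p=([1{:}0],[1{:}0])$, so the requirement that the support of $F$ lie in $N^{\oplus}(\lambda)$ forces one of two mutually exclusive configurations: either one of these rulings is a component of $C$, or $C$ is irreducible but the local Newton polygon of $C$ at $p$ is contained in a prescribed sub-polygon. Comparing these sub-polygons with Arnold's list of normal forms for isolated plane curve singularities, the borderline non-stable cases correspond exactly to $\mathbf{X}_9$ for irreducible $C$ and, when a ruling is a component, to $\mathbf{J}_{2,0}$ on the residual $(3,4)$-curve $C'$; the bi-degree shape of the relevant $N^{\oplus}(\lambda)$ further yields the intersection constraint between the ruling and $C'$ stated in~(2).

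The converse direction then amounts to saying that no coordinate change can move the support of a curve $C$ of type~(1) or~(2) into any $N^{\oplus}(\lambda)$: the assumption \emph{singularities better than} $\mathbf{X}_9$ (respectively, \emph{better than} $\mathbf{J}_{2,0}$) is precisely the statement that at every candidate point and for every choice of coordinates the local Newton polygon of $C$ strictly dominates the one cut out by any $\lambda\in S_{1,1}^{4,4}$. Theorem~\ref{thm:centroid}, applied with the centroid $\mathcal{O}_{4,4}$, then distinguishes stable from strictly semistable in the remaining boundary cases. The main technical obstacle is the exhaustive matching of each $\lambda\in S_{1,1}^{4,4}$ with a precise singularity normal form; the length of this case analysis is what makes the computer algebra indispensable, but it also provides a direct bridge to the classifications of Shah~\cite{Shah_1981} and Laza--O'Grady~\cite{Laza_OGrady_2018}.
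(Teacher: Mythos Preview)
Your proposal follows essentially the same strategy as the paper: compute the finite set $S_{1,1}^{4,4}$, extract the maximal sets $N^{\oplus}(\lambda)$, and match each to a singularity type via local normal forms, so that stability is characterised by elimination. The paper organises this into intermediate results (Propositions~\ref{unstable characterisation} and~\ref{strictly semistable characterisation} for the unstable and strictly semistable loci, then Lemmas~\ref{red with J_20} and~\ref{irr with X9}), but the logical skeleton is the one you describe.

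Two points deserve more care than your sketch gives them. First, your dichotomy ``either a ruling is a component, or $C$ is irreducible'' omits the \emph{double} ruling case: the family $N^{\oplus}(\lambda_0)$ (and the unstable $N^{+}(\lambda_3)$) forces a non-reduced component $\{X^2=0\}$, which is neither covered by your case~(1) nor by~(2), and must be excluded separately (the paper's Theorem~\ref{stable 4,4 curves} in fact adds the clause ``$C$ does not contain a double ruling''). Second, the sentence ``the assumption \emph{singularities better than} $\mathbf{X}_9$ \dots\ is precisely the statement that \dots\ the local Newton polygon of $C$ strictly dominates the one cut out by any $\lambda$'' is the crux of the converse and is not automatic: one must show that a curve with an $\mathbf{X}_9$ (respectively $\mathbf{J}_{2,0}$) singularity can always be moved by some $g\in G$ so that its support lies in $N^{\oplus}(\lambda_1)$ (respectively $N^{\oplus}(\lambda_2)$). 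The paper isolates exactly this as Lemmas~\ref{irr with X9} and~\ref{red with J_20}; your Newton-polygon heuristic is the right idea, but the ``if and only if'' between singularity type and monomial support needs an explicit argument, not just an appeal to Arnold's list.
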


Here, a $\mathbf{J}_{2,0}$ singularity is a singularity with  normal form $x^3+bx^2y^2+y^6$ (c.f \cite[p. 95]{Arnold1976}). As a consequence of the above theorem and \cite[Theorem 1.1]{ascher2020kmoduli}, we obtain an explicit description of the K-moduli stack parametrising K-semistable log Fano pairs $\big(\pr^1 \times \pr^1, cC_{4,4}\big)$, for $0<c<1/8$.

We then consider a higher dimensional example, and apply it to obtain an explicit description of a K-moduli space for Fano threefolds. We use the algorithm detailed in Section \ref{sec: computer implementation} and the implemented software to classify the GIT quotient of $(1,2)$-divisors in $\pr^1 \times \pr^3$. Such smooth divisors are general members of family 2.25 of Fano threefolds in the Mori--Mukai taxonomy.

\begin{theorem}[See Theorem \ref{stable 1,2 divisors}, Propositions \ref{semistable 1,2 divisors} and \ref{polystable 1,2 divisors}]\label{thm:intro}
    Let $X$ be a $(1,2)$-divisor in $\pr^1\times \pr^3$. Then $X$ is GIT
    \begin{enumerate}
        \item stable if and only if $X$ is smooth;
        \item polystable if and only if $X$ is toric with four $\mathbf{A}_1$ singularities;
        \item strictly semistable if and only if $X$ is singular with one or two $\mathbf{A}_1$ singularities.
    \end{enumerate}
\end{theorem}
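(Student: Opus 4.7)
The plan is to combine the algorithmic framework of Theorems \ref{thm: main for divisors} and \ref{thm:centroid} with the explicit enumeration from Section \ref{sec: computer implementation} in the case $(m,n,k,l)=(1,3,1,2)$. After fixing coordinates, write a $(1,2)$-divisor as $X=\{F=0\}$ with
\[
F = x_0\,Q_0(y_0,\ldots,y_3) + x_1\,Q_1(y_0,\ldots,y_3),
\]
where $Q_0,Q_1$ are quadrics on $\pr^3$. The software of \cite{our_code} produces the finite list $S_{1,3}^{1,2}$ of normalized one-parameter subgroups of $\SL(2)\times\SL(4)$ together with the associated maximal monomial sets $N^+(\lambda)\subset N^{\oplus}(\lambda)$ governing, respectively, unstable and non-stable divisors.

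For (1), I would prove that smoothness is equivalent to stability by contrapositive. If $X$ is singular at some point $p$, a $G$-translate moves $p$ to a standard coordinate point such as $([1:0],[1:0:0:0])$; the vanishing of $F$ and of its first partial derivatives at $p$ then forces explicit linear conditions on the coefficients of $F$, and one checks that these conditions place the monomial support of $F$ inside $N^{\oplus}(\lambda)$ for some $\lambda$ on the list, so that $X$ is not stable by Theorem \ref{thm: main for divisors}. Conversely, for each $\lambda \in S_{1,3}^{1,2}$ one inspects $N^{\oplus}(\lambda)$ and identifies a coordinate point fixed by $\lambda$ at which every monomial in $N^{\oplus}(\lambda)$, together with all its first partial derivatives, vanishes; hence every $F$ supported in $N^{\oplus}(\lambda)$ defines a singular divisor. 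Finiteness of $S_{1,3}^{1,2}$ makes this verification exhaustive.

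For (2) and (3), closed orbits in the semistable locus are characterised, by Hilbert--Mumford theory, as those whose stabilizer contains a subtorus; equivalently the representative can be taken (up to $G$) to be a sum of monomials sharing a common torus weight. The centroid of the bidegree $(1,2)$ weight polytope is the origin of the Cartan of $G$, so the Centroid Criterion of Theorem \ref{thm:centroid}, combined with the $S_2\times S_4$ Weyl action, leaves as the unique semistable torus-fixed normal form $F_0 = x_0 y_0 y_1 + x_1 y_2 y_3$. A direct Jacobian calculation shows that $F_0$ has exactly four ordinary nodes, at $([1:0],[0:0:\ast:\ast])$ and $([0:1],[\ast:\ast:0:0])$, proving (2). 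For (3), every strictly semistable orbit degenerates, under some one-parameter subgroup in $S_{1,3}^{1,2}$, to the unique polystable orbit of $F_0$; reading off the Jacobian of the normal forms that specialise to $F_0$ shows they carry exactly one or two $\mathbf{A}_1$ singularities.

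The main obstacle is excluding singularities worse than $\mathbf{A}_1$ from the semistable locus. For each $\lambda \in S_{1,3}^{1,2}$ that could correspond to an $\mathbf{A}_2$, higher $\mathbf{A}_n$, $\mathbf{D}_n$, or non-isolated singularity, one must verify that the additional vanishing of the Hessian or of higher-order jets at the singular point pushes the support of $F$ from $N^{\oplus}(\lambda)$ into the strictly smaller $N^+(\lambda)$, so that $X$ is unstable rather than semistable. The number of configurations is finite but substantial, and the explicit enumeration produced by \cite{our_code} is what makes the case analysis tractable.
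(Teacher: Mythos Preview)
Your outline for part (1) is essentially the paper's argument: moving a singular point to a coordinate point and reading off the vanishing conditions lands the support in one of the sets $N^{\oplus}(\lambda_i)$, and conversely each maximal $N^{\oplus}(\lambda_i)$ visibly forces a singular point. The paper carries this out via Lemma~\ref{singular 1,2 divisors}, placing the singularity at $([1{:}1],[0{:}0{:}0{:}1])$ and matching the resulting normal form with $N^{\oplus}(\lambda_3)$.

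There is, however, a real gap in your treatment of (2). Your claim that ``closed orbits in the semistable locus are characterised as those whose stabilizer contains a subtorus'' is only a necessary condition, and your assertion that the Centroid Criterion together with the $S_2\times S_4$ Weyl action singles out $F_0=x_0y_0y_1+x_1y_2y_3$ is not justified. The annihilator sets $\operatorname{Ann}(\lambda_i)$ from Table~\ref{tab:closed orbits 1,2} contain between six and nine monomials each, so divisors supported there form multi-parameter families, not a single Weyl orbit of monomial pairs. The paper's Proposition~\ref{polystable 1,2 divisors} does the missing work: it takes the generic member of each $\operatorname{Ann}(\lambda_i)$ and exhibits an explicit element of $\Aut(\pr^1\times\pr^3)$ (not merely a Weyl element) carrying it to the common form \eqref{eq: 1,2 polystable refined}.

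For (3), you correctly identify the difficulty---excluding $\mathbf{A}_{\ge 2}$, $\mathbf{D}_n$ or non-isolated singularities from the semistable locus---but your proposed mechanism (higher jets pushing the support from $N^{\oplus}$ into $N^{+}$) would require a substantial case analysis that you do not carry out. The paper avoids this entirely by a different device you have not used: Lemma~\ref{1,2 divisors and pencils} and Corollary~\ref{type of singularity remains same} identify the singularities of $X$ with those of the pencil curve $Q_0\cap Q_1\subset\pr^3$, and then Lemma~\ref{lem: 1,2 divisors with A1 sings} imports the known normal forms for singular complete intersections of two quadrics from \cite[\S 4.2]{pap22}. Matching those normal forms against the five $N^{\oplus}(\lambda_i)$ immediately shows that the maximal semi-destabilised families carry exactly one or two $\mathbf{A}_1$ points and nothing worse, without any jet-by-jet analysis.
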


As a consequence of Theorem \ref{thm:intro}, we obtain a description of the K-moduli stack $\mathcal{M}^K_{2.25}$ parametrising K-semistable Fano varieties in family 2.25, and its corresponding K-moduli space ${M}^K_{2.25}$, that is different from the description that was obtained in \cite[\S 5]{pap22}. Let  $\mathcal{M}^{GIT}_{1,2}$ be the GIT moduli stack parametrising GIT semistable $(1,2)$-divisors in $\pr^1\times \pr^3$, and the GIT quotient ${M}^{GIT}_{1,2}$, which is classified in Theorem \ref{thm:intro}.

\begin{theorem}[See Theorem \ref{k-moduli 2.25}]
    There exists an isomorphism $\mathcal{M}^K_{2.25} \cong \mathcal{M}^{GIT}_{1,2}$. In particular, there exists an isomorphism ${M}^K_{2.25} \cong {M}^{GIT}_{1,2}$.
\end{theorem}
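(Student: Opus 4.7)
The plan is the \emph{moduli continuity method}, as in \cite{odaka_spotti_sun_2016, liu-xu, pap22}, but with the GIT side now the explicit compactification produced by Theorem~\ref{thm:intro}. I would construct a morphism of Artin stacks $\Phi \colon \mathcal{M}^{GIT}_{1,2} \to \mathcal{M}^K_{2.25}$, show it is bijective on closed points with matching stabilisers, and conclude via an Alper/Luna slice argument that $\Phi$ is an isomorphism; the statement on good moduli spaces then follows formally. The starting point is that, by Theorem~\ref{thm:intro}(1) and the Mori--Mukai classification, GIT-stable $(1,2)$-divisors are precisely the smooth Fano threefolds of family 2.25. Exhibiting a single smooth K-stable member (for example by a $\delta$-invariant or valuative computation, or by importing the relevant smooth case from \cite{pap22}) and invoking openness of K-stability in flat families yields K-stability of every smooth $(1,2)$-divisor, so the stable loci of the two stacks coincide.

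Next I would match the polystable boundary. By Theorem~\ref{thm:intro}(2), the unique non-stable GIT-polystable object is the toric divisor $X_0$ with four $\mathbf{A}_1$-singularities, and by Theorem~\ref{thm:intro 2} applied to bidegree $(1,2)$ every strictly semistable divisor degenerates to $X_0$ through a one-parameter subgroup in the fundamental set $S^{1,2}_{1,3}$. K-polystability of $X_0$ then reduces to the Berman--Berndtsson / Wang--Zhu barycenter criterion for toric Fano varieties, a finite polyhedral check on the moment polytope of $X_0$. Once this is in place, every GIT-polystable $(1,2)$-divisor is K-polystable, and GIT-semistability implies K-semistability by specialisation to $X_0$ together with openness of K-semistability.

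Consequently, the CM line bundle descends from the tautological family over the GIT-semistable locus to produce the morphism $\Phi$. Both stacks are proper with good moduli spaces, and $\Phi$ is bijective on closed points with matching stabilisers at $X_0$ (a common torus of the same dimension on both sides), hence an isomorphism of Artin stacks, descending to the claimed isomorphism of good moduli spaces. The main obstacle is the explicit verification of K-polystability of the toric limit $X_0$, which requires writing down its fan or moment polytope and executing the barycenter check; a secondary technical point is ruling out extraneous K-moduli boundary components disjoint from the GIT image, which is handled by a dimension and connectedness argument using that the smooth K-stable locus is dense in both compactifications.
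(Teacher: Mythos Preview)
Your proposal is correct and follows essentially the same strategy as the paper (what the authors call the \emph{reverse moduli continuity method}): verify that GIT-(poly/semi)stable $(1,2)$-divisors are K-(poly/semi)stable---smooth members via \cite[Corollary~4.32]{araujo_2023}, the toric limit via the barycentre criterion, and the strictly semistable ones by specialisation plus openness---then produce a morphism $\mathcal{M}^{GIT}_{1,2}\to\mathcal{M}^K_{2.25}$ and upgrade it to an isomorphism. The only notable difference is in the endgame: rather than matching stabilisers and invoking a Luna slice, the paper observes the map is an open immersion, applies \cite[Prop.~6.4]{Alper} to get finiteness, and finishes with Zariski's Main Theorem (with surjectivity implicitly supplied by the prior description $M^K_{2.25}\cong\pr^1$ from \cite{pap22}, which is precisely the ``no extraneous boundary components'' issue you flag).
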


\renewcommand{\abstractname}{Acknowledgements}
\begin{abstract}
We would like to thank Ruadha{\'i} Dervan and Yuchen Liu for advice and useful comments. TP would like to thank the Isaac Newton Institute for Mathematical Sciences, Cambridge, for support and hospitality during the programme ``New equivariant methods in algebraic and differential geometry", where work on this paper was undertaken, while he was an INI Postdoctoral Research Fellow. IK and TAO were funded by a summer project fellowship associated with the Royal Society University Research Fellowship held by Ruadha{\'i} Dervan. TP was also funded by a postdoctoral fellowship associated with the aforementioned Royal Society University Research Fellowship. An important part of this paper was written during a visit by TP at the University of Glasgow. We would like to thank both Ruadha{\'i} Dervan and the UoG for the hospitality.
\end{abstract}

\section{Preliminaries}\label{sec: prelims}

Throughout this paper, we work over $\mathbb{C}$. Let $G \coloneqq \operatorname{SL}(m+1) \times \operatorname{SL}(n+1)$. We fix two positive integers $k,l$, and consider divisors of bidegree $(k,l)$ in $\pr^m \times \pr^n$. Note that the connected component of the identity in $\Aut(\pr^m \times \pr^n)$ is $\operatorname{PGL}(m+1)\times \operatorname{PGL}(n+1)$. Hence, $\Aut(\pr^m \times \pr^n) = \operatorname{PGL}(m+1)\times \operatorname{PGL}(n+1) \rtimes H$, where $H$ is a finite group. As such, there exists a natural $\operatorname{PGL}(m+1)\times \operatorname{PGL}(n+1)$-action on each divisor of bidegree $(k,l)$, and it suffices to only consider this action on each divisor.

Denoting the variables on $\pr^m$ by $x_0,\dots,x_m$ and the ones on $\pr^n$ by $y_0,\dots,y_n$, a $(k,l)$-divisor on $\pr^m \times \pr^n$ is defined by a bihomogeneous polynomial $F$ of bidegree $(k,l)$ in the variables $x_0,\dots,x_m,y_0,\dots,y_n$. That is, we have $F = \sum_I c_I x^I$, where $x^I \coloneqq x_0^{k_0} \, \cdots \,\, x_m^{k_m} y_0^{l_0} \, \cdots \,\, y_n^{l_n}$ for $I = (k_0, \dots, k_m, l_0, \dots, l_n) \in \Z^{m+n+2}$, such that $\sum_i k_i = k, \sum_j l_j = l$, and constants $c_I$. We define $\Xi_{k,l}$ to be the set of all monomials of bidegree $(k,l)$ in the variables $x_0,\dots,x_m,y_0,\dots,y_n$. Given a divisor $X_{k,l}$, its associated set of monomials is
$$
\operatorname{Supp}(X_{k,l}) \coloneqq \big\{ x^I\in \Xi_{k,l} \mid c_I \neq 0 \big\}.
$$

We consider the linear system  $V_{k,l} \coloneqq |\mathcal{O}_{\pr^m \times \pr^n}(k,l)|$ of $(k,l)$-divisors in $\pr^m \times \pr^n$ and we aim to study the GIT quotient $\pr V^* \sslash G$. This can be achieved by using the Hilbert-Mumford numerical criterion \cite[Theorem 2.1]{mumford}. To this end, we fix a maximal torus $T \subset G$ and a coordinate system on $\pr^m \times \pr^n$ such that $T$ is diagonal in $G$. Because a product of maximal tori is a maximal torus, we may assume without loss of generality, that $T$ is of the form $T = T_1 \times T_2$ for two maximal tori $T_1 \subset \operatorname{SL}(m+1), T_2 \subset \operatorname{SL}(n+1)$ that are diagonal. In this coordinate system, a one-parameter subgroup $\lambda: \mathbb{G}_m \to T$ is given by a pair of diagonal matrices
$$
\lambda(t) = \big ( \diag(t^{r_0}, \dots, t^{r_m}),\diag(t^{s_0}, \dots, t^{s_n}) \big),
$$
such that $r_i,s_j \in \Z$ for all $i=0,\dots,m$ and $j=0,\dots,n$, and $\sum_{i=0}^m r_i = \sum_{j=0}^n s_j = 0$. We call $\lambda$ \emph{normalised} if it is non-trivial, and we have $r_0 \ge \dots \ge r_m$ and $s_0 \ge \dots \ge s_n$. Note that the normalisation condition forces $r_0, s_0 \ge 0$ and $r_m, s_n \le 0$. We also note that every one-parameter subgroup of $T$ is conjugate to a normalised one-parameter subgroup.

Given a one-parameter subgroup $\lambda$ as above, the natural action of $\lambda$ on the coordinates of $\pr^m \times \pr^n$ induces an action on monomials in $\Xi_{k,l}$. Namely, given a monomial $x^I = x_0^{k_0} \, \cdots \,\, x_m^{k_m} y_0^{l_0} \, \cdots \,\, y_n^{l_n}$, we have
$$
\lambda(t) \cdot x^I = t^{\langle I, \lambda \rangle} x^I,
$$
where $\langle I, \lambda \rangle \coloneqq \sum_{i=0}^m r_i k_i + \sum_{j=0}^n s_j l_j$. This then extends to an action on the polynomial defining a divisor $X_{k,l}$ as follows:
$$
\lambda(t) \cdot X_{k,l} = \smashoperator[r]{\sum_{{x^I} \in \operatorname{Supp}(X_{k,l})}} c_It^{\langle I, \lambda \rangle} x^I.
$$
Note that the bilinear pairing $\langle -,- \rangle$ is the usual inner product on $\R^{m+n+2}$, restricted to $\Z^{m+n+2}$. We define the \emph{Hilbert-Mumford function} as
\begin{equation*}
    \mu(X_{k,l},\lambda) \coloneqq \min \Big \{ \langle I, \lambda \rangle \mid x^I \in \operatorname{Supp}(X_{k,l}) \Big \}.
\end{equation*}
This definition allows us to state the Hilbert-Mumford numerical criterion  \cite[Theorem 2.1]{mumford} as follows:

\begin{lemma} \label{lemma: hilbert-mumford}
    A divisor $X_{k,l}$ is not stable (respectively unstable) if and only if there exists $g \in G$ and a normalised one-parameter subgroup $\lambda$ of $T \subset G$ such that $\mu(g \cdot X_{k,l},\lambda) \ge 0$ (respectively $> 0$).
\end{lemma}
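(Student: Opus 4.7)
The plan is to derive this lemma as a direct translation of Mumford's Hilbert-Mumford numerical criterion \cite[Theorem 2.1]{mumford} into the combinatorial language of the paper, together with two reductions: from arbitrary one-parameter subgroups of $G$ to those lying in the fixed maximal torus $T$, and from arbitrary 1-PS of $T$ to normalised ones. The substantive content is carried entirely by Mumford's theorem; what remains is bookkeeping with the conjugation-equivariance of $\mu$.

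I would begin by recalling the classical criterion in the form: a nonzero point $F$ in a linearised $G$-representation is not stable (respectively unstable) if and only if there exists a non-trivial 1-PS $\lambda$ of $G$ with $\mu(F,\lambda) \ge 0$ (respectively $> 0$). I would then verify that the combinatorial $\mu$ defined in the paper is indeed the Hilbert-Mumford weight for this sign convention. The representation $V_{k,l}$ decomposes under $\lambda$ into weight spaces indexed by monomials $x^I$ with weight $\langle I, \lambda \rangle$, so $\lambda(t)\cdot F \to 0$ as $t \to 0$ if and only if every weight $\langle I, \lambda\rangle$ with $c_I \neq 0$ is strictly positive, i.e.\ $\mu(F,\lambda) > 0$. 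The analogous statement with weak inequality corresponds to the orbit of $F$ failing to be closed in the affine cone, giving the ``not stable'' case.

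For the first reduction, I would use the fact that any 1-PS of $G$ lies in some maximal torus $T'$, and that all maximal tori of $G$ are $G$-conjugate, so there exists $h \in G$ with $hT'h^{-1} = T$. The key equivariance is $\mu(h \cdot F,\, h\lambda h^{-1}) = \mu(F,\lambda)$, which follows directly from observing that the $\lambda'$-weight spaces for $\lambda' := h\lambda h^{-1}$ are the $h$-translates of those of $\lambda$, so $F_\chi \neq 0$ if and only if $(h\cdot F)_\chi \neq 0$. This allows me to replace $(F, \lambda)$ by $(h \cdot F,\, h\lambda h^{-1})$ with the new 1-PS in $T$, setting $g = h$. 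For the second reduction, the Weyl group $W = N_G(T)/T = S_{m+1} \times S_{n+1}$ is represented in $G$ by permutation matrices and acts on 1-PS of $T$ by simultaneously permuting the diagonal entries in each factor; any such 1-PS is therefore Weyl-conjugate to a normalised one, and the same equivariance absorbs this additional conjugation into the $G$-element acting on $F$. The main obstacle is purely notational---tracking signs and the directions of conjugation---since there is no geometric content beyond Mumford's criterion itself.
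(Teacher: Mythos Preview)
Your proposal is correct and follows essentially the same approach as the paper: invoke the classical Hilbert--Mumford criterion, then use the conjugation-equivariance $\mu(F,\theta)=\mu(g\cdot F,\,g\theta g^{-1})$ together with conjugacy of maximal tori (and the Weyl group action) to reduce to a normalised one-parameter subgroup of the fixed diagonal torus $T$. The paper compresses your two reductions into a single conjugation step and omits the verification that the combinatorial $\mu$ agrees with the Hilbert--Mumford weight, but the argument is the same.
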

\begin{proof}
    Consider a divisor $X_{k,l}$ in $\pr^m \times \pr^n$ that is not stable. Then there exists a one-parameter subgroup $\theta$ in some maximal torus $T'$ of $G$, possibly different from $T$, such that $\mu(X_{k,l}, \theta )\geq 0$. Since all maximal tori of $G$ are conjugate, it follows from \cite[Exercise 9.2(i)]{dolgachev_2003} that
    $$
    \mu(X_{k,l}, \theta) = \mu(g\cdot X_{k,l}, g \theta g^{-1}),
    $$
    for all $g \in G$. Thus, there exists $g_0 \in G$ such that $\lambda \coloneqq g_0 \theta g_0^{-1}$ is a normalised one-parameter subgroup in the diagonal torus $T$, and $g_0 \cdot X_{k,l}$ has coordinates such that $\mu(g_0 \cdot X_{k,l}, \lambda) \ge 0$. Similarly, if $X_{k,l}$ is unstable, the same argument works by replacing $\ge$ with $>$.
\end{proof}

\section{GIT of \texorpdfstring{$(k,l)$}{}-divisors in \texorpdfstring{$\pr^m \times \pr^n$}{}} \label{sec:GIT theorems}

We consider the GIT quotient $\pr V^* \sslash G$ of bidegree $(k,l)$-divisors in $\pr^m \times \pr^n$, where $G= \operatorname{SL}(m+1) \times \operatorname{SL}(n+1)$. A few of these GIT quotients have been described explicitly, with particular emphasis given to divisors in a smooth quadric surface $\pr^1\times \pr^1$ (c.f. \cite{fedorchuk}). In this section, we will extend the already existing low-dimensional setting to a general computational approach, that will allow us to classify such GIT quotients in arbitrary products of projective spaces. We fix a maximal torus $T \subset G$ and a coordinate system on $\pr^m \times \pr^n$ such that $T$ is diagonal in $G$. 

\subsection{Stability Conditions}
In this section, we establish that only a finite set of normalised one-parameter subgroups in $N = \operatorname{Hom}_{\Z}(\mathbb{G}_m, T)$ is required to characterise instability of divisors of bidegree $(k,l)$ on $\pr^m \times \pr^n$. We will be following a similar approach to \cite{Gallardo_2018, pap22}.

\begin{definition} \label{defn: new fundamental set}
    The \emph{fundamental set $S_{m,n}^{k,l}$ of one-parameter subgroups} is given by all $\lambda(t) = (\diag(t^{r_0}, \dots, t^{r_m}),\diag(t^{s_0}, \dots,  t^{s_n})) \in T$ such that 
    $$
    (r_0, \dots, r_m, s_0, \dots, s_n) = c(\rho_0, \dots, \rho_m, \sigma_0, \dots, \sigma_n) \in \Z^{m+n+2},
    $$
    subject to the following conditions:
    \begin{enumerate}
        \item $\rho_i = \frac{a_i}{b_i} \in \Q$ and $\sigma_j = \frac{c_j}{d_j} \in \Q$ with $\gcd(a_i,b_i)=1$ and $\gcd(c_j,d_j)=1$ for all $i=0,\dots,m$ and $j=0,\dots,n$;
        \item $c = \operatorname{lcm}(b_0, \dots, b_m, d_0, \dots, d_n)$;
        \item $\rho_0 \ge \cdots \ge \rho_m$ and $\sigma_0 \ge \cdots \ge \sigma_n$, such that $\sum_{i=0}^m \rho_i = 0$ and $\sum_{j=0}^n \sigma_j = 0$;
        \item
        $(\rho_0, \dots, \rho_m, \sigma_0, \dots, \sigma_n)$ is the unique non-trivial solution of a consistent linear system of $m+n-1$ equations chosen from the set
       \end{enumerate}
\begin{align} \label{eqn: fundamental set}
        \operatorname{Eq}_{m,n}^{k,l} \coloneqq
                \Bigg\{ \sum_{i=0}^m w_i \rho_i + \sum_{j=0}^m v_j \sigma_j = 0 \mid w_i \in [-k,k] &\cap \Z \, \text{ for all } i, \sum_{i=0}^m w_i = 0, \\
                v_j \in [-l,l]  &\cap \Z \, \text{ for all } j,\sum_{j=0}^n v_j = 0 \Bigg\},
                \notag
\end{align}
    after fixing $\rho_0 = 1$ or $\sigma_0 = 1$.
    We note that the set $S_{m,n}^{k,l}$ is finite because $\operatorname{Eq}_{m,n}^{k,l}$ is finite.
\end{definition}

\begin{theorem} \label{thm: fundamental set}
    A divisor $X_{k,l}$  is not stable (respectively unstable) if and only if there exist $g \in G, \lambda \in S_{m,n}^{k,l}$ satisfying
    $$
    \mu(g \cdot X_{k,l}, \lambda) \ge 0 \quad \text{(respectively  $>0$)}.
    $$
\end{theorem}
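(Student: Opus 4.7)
The $(\Leftarrow)$ direction is immediate from Lemma~\ref{lemma: hilbert-mumford}, since every $\lambda \in S_{m,n}^{k,l}$ is, by conditions (1)--(3) of Definition~\ref{defn: new fundamental set}, a normalised one-parameter subgroup of $T$. For the converse, I would apply Lemma~\ref{lemma: hilbert-mumford} to obtain (after replacing $X_{k,l}$ with $g\cdot X_{k,l}$) a normalised one-parameter subgroup $\lambda$ of $T$ with $\mu(X_{k,l},\lambda)\ge 0$ (respectively $>0$), and then replace $\lambda$ by an element of $S_{m,n}^{k,l}$ satisfying the same inequality.

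The core idea is to work with a combinatorial subdivision of $N_\bbr := N\otimes_\bbz\bbr$, identified with the $(m+n)$-dimensional traceless subspace $\{\sum_i\rho_i=\sum_j\sigma_j=0\}\subset\bbr^{m+n+2}$. The normalised locus $C\subset N_\bbr$ is the rational polyhedral cone cut out by $\rho_i\ge\rho_{i+1}$ and $\sigma_j\ge\sigma_{j+1}$. I would consider the polyhedral fan $\Sigma$ obtained by subdividing $C$ along \emph{every} hyperplane $\{\langle I-J,\cdot\rangle=0\}$ for pairs $I,J\in\Xi_{k,l}$; the normalisation walls are of this form (take $I=x_i^k y_0^l$, $J=x_{i+1}^k y_0^l$, and analogously in the $\sigma$ variables). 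On the interior of each maximal cell $R$ of $\Sigma$ the monomials in $\Xi_{k,l}$ are totally ordered by $I\mapsto\langle I,\lambda\rangle$, so a unique $I_R\in\operatorname{Supp}(X_{k,l})$ achieves the minimum; by continuity it remains a minimiser on the closure $\overline R$, hence $\mu(X_{k,l},\,\cdot\,)=\langle I_R,\,\cdot\,\rangle$ is \emph{linear} on $\overline R$.

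Since $\lambda\in\overline R$ for some such $R$, I would write $\lambda=\sum_i c_i\lambda^{(i)}$ as a non-negative combination of the extreme rays of $R$. Linearity yields $\mu(X_{k,l},\lambda)=\sum_i c_i\langle I_R,\lambda^{(i)}\rangle$, which forces some $i_0$ to satisfy $\mu(X_{k,l},\lambda^{(i_0)})=\langle I_R,\lambda^{(i_0)}\rangle\ge 0$ (respectively $>0$). This is the main reason for working with $\Sigma$ rather than directly with the sub-cone $\{\mu\ge 0\}$: the strict version of the statement would typically fail on extreme rays of the latter but holds on extreme rays of $R$. An extreme ray $\lambda^{(i_0)}$ of $R$ is a one-dimensional face of a rational polyhedral cone in $N_\bbr$, so it is cut out by $m+n-1$ linearly independent tight defining equations, each of which is either a normalisation equality or a wall equation $\langle I-J,\cdot\rangle=0$. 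In both cases the coefficients lie in $[-k,k]\cap\bbz$ and $[-l,l]\cap\bbz$ with zero row sums, so the equation belongs to $\operatorname{Eq}_{m,n}^{k,l}$. Normalising $\lambda^{(i_0)}$ by $\rho_0=1$ or $\sigma_0=1$ (possible since $\lambda^{(i_0)}\in C\setminus\{0\}$ is normalised) and clearing denominators as in conditions (1)--(2) of Definition~\ref{defn: new fundamental set} places $\lambda^{(i_0)}$ in $S_{m,n}^{k,l}$, finishing the argument.

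The main obstacle is ensuring that every extreme ray of a maximal cone of $\Sigma$ is genuinely characterised by precisely $m+n-1$ equations from $\operatorname{Eq}_{m,n}^{k,l}$: possible degeneracies where more equations are tight than strictly needed are handled by extracting a maximal linearly independent subsystem, which always exists for a rational polyhedral cone. The rest is combinatorial bookkeeping of the hyperplane arrangement produced by monomial differences in $\Xi_{k,l}$, together with the verification that normalisation walls fit into this arrangement.
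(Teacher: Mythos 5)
Your proof is correct and follows essentially the same route as the paper: both reduce to the finite set $S_{m,n}^{k,l}$ by exploiting the piecewise linearity of $\mu(X_{k,l},-)$ on the cone of normalised one-parameter subgroups, subdivided by the hyperplanes $\langle I-J,\cdot\rangle=0$ coming from $\operatorname{Eq}_{m,n}^{k,l}$, and identify the relevant one-parameter subgroups as the zero-/one-dimensional strata of that subdivision. Your extreme-ray convexity argument (writing $\lambda$ as a non-negative combination of rays of a cell of linearity and concluding that some ray already has $\mu\ge 0$, respectively $>0$) in fact supplies the precise justification for the step the paper only asserts, namely that the ``critical points'' of $\mu$ suffice to determine its sign on all of $\Delta$.
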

\begin{proof}
    Suppose $X_{k,l}$ in $\pr^m \times \pr^n$ is not stable (respectively unstable). By Lemma \ref{lemma: hilbert-mumford}, there exist $g \in G$ and a normalised one-parameter subgroup $\lambda \in N$ such that $\mu(g \cdot X_{k,l}, \lambda) \ge 0$ (respectively  $>0$). We will now show that it is sufficient to take $\lambda$ to be in the finite set $S_{m,n}^{k,l}$.

    Extending to rational values, in the coordinates induced by $T$, normalised one-parameter subgroups are of the form $\lambda(t) = (\diag(t^{\rho_0}, \dots, t^{\rho_m}),\diag(t^{\sigma_0}, \dots,  t^{\sigma_n})) \in T$ with $(\rho_0, \dots, \rho_m, \sigma_0, \dots, \sigma_n) \in \Q^{m+n+2}$ subject to the conditions $\sum_{i=0}^m \rho_i = 0$ such that $\rho_i \ge \rho_{i+1}$ for $i = 0, \dots, m-1$, and $\sum_{j=0}^n \sigma_j = 0$ such that $\sigma_j \ge \sigma_{j+1}$ for $j = 0, \dots, n-1$. Geometrically, the set $\Gamma$ of all points in $\Q^{m+n+2}$ satisfying these conditions is the union of two convex cones $\Gamma_1$ and $\Gamma_2$ that intersect only at the origin.
    We may further impose the conditions $\rho_0 \le 1$ and $\sigma_0 \le 1$ without loss of generality. This amounts to intersecting $\Gamma$ with the two half-spaces $\rho_0 \le 1$ and $\sigma_0 \le 1$, which results in a set $\Delta$ that is a union of the two simplices $\Delta_1 = \Gamma_1 \cap \{ \rho_0 \le 1 \}$ and $\Delta_2 = \Gamma_2 \cap \{ \sigma_0 \le 1 \}$ of dimensions $m$ and $n$ respectively.
    With a slight abuse of notation, we will write $\lambda = (\rho_0, \dots, \rho_m, \sigma_0, \dots, \sigma_n) \in \Delta$ for the corresponding one-parameter subgroup $\lambda$. 
    
   Fixing a divisor $X_{k,l}$, the function $\mu(X_{k,l}, -): \Q^{m+n+2} \to \Q$ is continuous and piecewise linear. Furthermore, its restriction on $\Delta$, which is a compact subset, will attain a minimum and a maximum.
    We note that we only need to consider the one-parameter subgroups that correspond to the critical points of $\mu(X_{k,l}, -)$ restricted to $\Delta$, as these are sufficient to determine the sign of $\mu(X_{k,l}, \lambda)$, as $\lambda$ ranges through all possible values in $\Delta$.     The points on which $\mu(X_{k,l}, -)$ fails to be linear are precisely the points $\lambda \in \Delta$ such that $\langle I, \lambda \rangle = \langle I', \lambda \rangle$, where $I,I'$ correspond to a pair of distinct monomials $x^I,x^{I'} \in \operatorname{Supp}(X_{k,l})$. For each $(k,l)$, there are only finitely many monomials in $\Xi_{k,l}$, and therefore $\mu(X_{k,l}, -)$ has only a finite number of critical points on $\Delta$. 
    
    As $\langle -,- \rangle$ is bilinear, the condition $\langle I, \lambda \rangle = \langle I', \lambda \rangle$ is equivalent to $\langle I - I', \lambda \rangle = 0$. Writing $I = (k_0, \dots, k_m, l_0, \dots, l_n)$ and $I' = (k'_0, \dots, k'_m, l'_0, \dots, l'_n)$ with $\sum_{i=0}^m k_i = \sum_{i=0}^m k'_i = k$ and $\sum_{j=0}^n l_j = \sum_{j=0}^n l'_j = l$, the equation $\langle I - I', \lambda \rangle = 0$ becomes
    $$
    \sum_{i=0}^m (k_i - k'_i)\rho_i + \sum_{j=0}^n (l_j -l'_j)\sigma_j = 0.
    $$
    Making the substitutions $w_i \coloneqq k_i - k'_i$ and $v_j \coloneqq l_j - l'_j$ yields the set of equations $\operatorname{Eq}_{m,n}^{k,l}$ in \eqref{eqn: fundamental set}. Indeed, since we have $k_i,k'_i \in \{ 0, \dots, k\}$, we obtain $w_i \in \{-k, \dots, k \}$ and also $\sum_{i=0}^m w_i = \sum_{i=0}^m k_i - \sum_{i=0}^m k'_i = k - k = 0$. Similarly for the $v_j$.

    Each choice of the coefficients $w_i, v_j$ describes a hyperplane $H \subset \Q^{m+n+2}$. Upon fixing $\rho_0 = 1$ or $\sigma_0 = 1$, the intersection of $H$ with $\Delta$ is a union $\Delta'_1 \cup \Delta'_2 = (H\cap \Delta_1) \cup (H \cap \Delta_2)$, where $\Delta'_1$ and $\Delta'_2$ are simplices of dimensions $m-1$ and $n-1$ respectively. The critical points of $\mu(X_{k,l}, -)$ occur either on $\partial \Delta$ or on $\Delta'_1 \cup \Delta'_2$. The boundary $\partial \Delta$ consists of points $\lambda = (\rho_0, \dots, \rho_m, \sigma_0, \dots, \sigma_n) \in \Delta$ such that $\rho_i = \rho_{i+1}$ or $\sigma_j = \sigma_{j+1}$ for some $i$ or $j$. But these points may also be extracted from $\operatorname{Eq}_{m,n}^{k,l}$ by a suitable choice of $w_i$ and $v_j$; e.g. we can always choose the $w_i$ and $v_j$ so that $0 = \sum_{i=0}^m w_i \rho_i + \sum_{j=0}^n v_j \sigma_j = \rho_i - \rho_{i+1}$. The intersection of $m+n-1$ hyperplanes chosen from Equations \eqref{eqn: fundamental set} completely specifies a critical point in $\Q^{m+n+2}$. The set of all such points correspond to the points $(\rho_0, \dots, \rho_m, \sigma_0, \dots, \sigma_n)$ as described in Definition \ref{defn: new fundamental set}. Multiplying by a suitable constant gives the integral points specifying the normalised one-parameter subgroups in the set $S_{m,n}^{k,l}$.
\end{proof}

Theorem \ref{thm: fundamental set} allows us to completely characterise instability of $(k,l)$-divisors by only considering one-parameter subgroups in the finite set $S_{m,n}^{k,l}$.

\begin{definition} \label{def: new monomial sets}
Let $\lambda$ be a normalised one-parameter subgroup. A non-empty subset $A \subset \Xi_{k,l}$ is called \emph{maximal (semi-)destabilised with respect to $\lambda$} if the following conditions hold:
\begin{enumerate}
    \item for all $x^I\in A$, $\langle I,\lambda \rangle >0$ ($\geq 0$, respectively);
    \item if there is another subset $B \subset \Xi_{k,l}$ such that $A\subset B$, and for all $x^J\in B$ the inequality $\langle J,\lambda \rangle >0$ ($\geq 0$, respectively) holds, then $A=B$.
\end{enumerate}

\end{definition}

It is not hard to see that given a normalised one-parameter subgroup $\lambda$, the maximal (semi-)destabilised sets with respect to $\lambda$ are given by
    \begin{align*}
            N^+(\lambda) &\coloneqq \big\{ x^I \in \Xi_{k,l} \mid \langle I, \lambda \rangle > 0 \big\},\\
            N^{\oplus}(\lambda) &\coloneqq \big\{ x^I \in \Xi_{k,l} \mid \langle I, \lambda \rangle \geq 0 \big\}.
    \intertext{We also define the \emph{annihilator} subset}
    \operatorname{Ann}(\lambda) &\coloneqq \big\{ x^I \in \Xi_{k,l} \mid \langle I, \lambda \rangle = 0 \big\} \subset N^{\oplus}(\lambda).
    \end{align*}

We are now in a position to state our main results:

\begin{theorem} \label{thm: main for divisors}
    A divisor $X_{k,l}$ in $\pr^m \times \pr^n$ is not stable (respectively unstable) if and only if there exists $g \in G$ such that the associated set of monomials of $g \cdot X_{k,l}$ is contained in $N^{\oplus}(\lambda)$ (respectively $N^+(\lambda)$) for some $\lambda \in S_{m,n}^{k,l}$.
\end{theorem}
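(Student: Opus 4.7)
The plan is to observe that this theorem is essentially a reformulation of Theorem \ref{thm: fundamental set} in terms of monomial sets, so the argument boils down to unpacking the definition of the Hilbert-Mumford function $\mu$ and relating it to the sets $N^{\oplus}(\lambda)$ and $N^+(\lambda)$.

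First, I would prove the easier direction: suppose there exist $g \in G$ and $\lambda \in S_{m,n}^{k,l}$ such that $\operatorname{Supp}(g \cdot X_{k,l}) \subset N^{\oplus}(\lambda)$ (resp.\ $N^+(\lambda)$). Then by the very definition of $N^{\oplus}(\lambda)$ (resp.\ $N^+(\lambda)$), every $x^I \in \operatorname{Supp}(g \cdot X_{k,l})$ satisfies $\langle I, \lambda \rangle \geq 0$ (resp.\ $>0$). Taking the minimum over the support,
\[
\mu(g \cdot X_{k,l}, \lambda) = \min \bigl\{ \langle I, \lambda \rangle \mid x^I \in \operatorname{Supp}(g \cdot X_{k,l}) \bigr\} \geq 0 \quad (\text{resp.\ } > 0).
\]
By Theorem \ref{thm: fundamental set}, this certifies that $X_{k,l}$ is not stable (resp.\ unstable).

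For the converse, suppose $X_{k,l}$ is not stable (resp.\ unstable). By Theorem \ref{thm: fundamental set}, there exist $g \in G$ and $\lambda \in S_{m,n}^{k,l}$ with $\mu(g \cdot X_{k,l}, \lambda) \geq 0$ (resp.\ $>0$). Since $\mu$ is defined as the \emph{minimum} of the pairings $\langle I, \lambda \rangle$ over the support of $g \cdot X_{k,l}$, this inequality forces $\langle I, \lambda \rangle \geq 0$ (resp.\ $>0$) for \emph{every} $x^I \in \operatorname{Supp}(g \cdot X_{k,l})$. By definition, this says exactly that $\operatorname{Supp}(g \cdot X_{k,l}) \subset N^{\oplus}(\lambda)$ (resp.\ $\subset N^+(\lambda)$).

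Since both directions are short manipulations of definitions, I do not anticipate a serious obstacle here; the real work was already done in Theorem \ref{thm: fundamental set}, which reduced the infinite collection of one-parameter subgroups to the finite set $S_{m,n}^{k,l}$. The only care needed is to keep the strict/non-strict inequalities consistent between the unstable and not stable cases, and to note that the statement about $g$ and $\lambda$ transfers transparently from $\mu$-language to monomial-support language precisely because $\mu$ is a minimum over the support.
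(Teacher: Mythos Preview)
Your proposal is correct and follows essentially the same approach as the paper: both arguments invoke Theorem \ref{thm: fundamental set} and then simply unpack the definition of $\mu$ as a minimum over the support to translate the inequality $\mu(g\cdot X_{k,l},\lambda)\geq 0$ (resp.\ $>0$) into the containment $\operatorname{Supp}(g\cdot X_{k,l})\subset N^{\oplus}(\lambda)$ (resp.\ $N^+(\lambda)$). The paper's proof is slightly terser and appends a remark about selecting maximal $N^{\oplus}(\lambda)$ under containment, but this is commentary rather than an additional step in the logical argument.
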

\begin{proof}
    By Theorem \ref{thm: fundamental set}, $X_{k,l}$ is not stable if and only if there exist $g \in G,\lambda \in S_{m,n}^{k,l}$ such that
    $$
    \mu(g \cdot X_{k,l}, \lambda) = \min \Big \{ \langle I, \lambda \rangle \mid x^I \in \operatorname{Supp}(g \cdot X_{k,l}) \Big \} \geq 0.
    $$
    This implies that $\langle I, \lambda \rangle \geq 0$ for all $x^I \in \operatorname{Supp}(g \cdot X_{k,l})$, hence $\operatorname{Supp}(g \cdot X_{k,l}) \subset N^{\oplus}(\lambda)$. Similarly, $X_{k,l}$ is unstable if and only if $\langle I, \lambda \rangle > 0$ for all $x^I \in \operatorname{Supp}(g \cdot X_{k,l})$, which implies $\operatorname{Supp}(g \cdot X_{k,l}) \subset N^+(\lambda)$. Choosing the maximal sets $N^{\oplus}(\lambda)$ under the containment order, where $\lambda \in S^{k,l}_{m,n}$, we obtain families of divisors whose coefficients belong to maximal destabilised sets.
\end{proof}

In light of Theorem \ref{thm: main for divisors}, we see that to describe the families of not stable (respectively unstable) divisors, it suffices to consider sets $N^{\oplus}(\lambda)$ (respectively $N^+(\lambda)$) that are maximal with respect to the containment order of sets, for $\lambda \in S_{m,n}^{k,l}$.

\begin{proposition}
 \label{thm: polystable divisors}
    If a divisor $X_{k,l}$ in $\pr^m \times \pr^n$ belongs to a closed strictly semistable orbit (i.e. a strictly polystable orbit), then there exist $g \in G, \lambda \in S_{m,n}^{k,l}$ such that $\operatorname{Supp}(g \cdot X_{k,l}) = \operatorname{Ann}(\lambda)$.
\end{proposition}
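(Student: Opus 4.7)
Since $X_{k,l}$ is semistable but not stable, Theorem \ref{thm: fundamental set} produces $g_0 \in G$ and a normalised $\lambda \in S_{m,n}^{k,l}$ with $\mu(g_0 \cdot X_{k,l}, \lambda) \geq 0$. Semistability rules out strict positivity (otherwise $X_{k,l}$ would be unstable), forcing $\mu(g_0 \cdot X_{k,l}, \lambda) = 0$. Equivalently, every $x^I \in \operatorname{Supp}(g_0 \cdot X_{k,l})$ satisfies $\langle I, \lambda \rangle \geq 0$, with equality achieved by at least one monomial.

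Next, consider the limit
\[
Y := \lim_{t \to 0} \lambda(t) \cdot (g_0 \cdot X_{k,l}).
\]
Writing $g_0 \cdot X_{k,l} = \sum_I c_I x^I$, the $\lambda$-action yields $\sum_I c_I t^{\langle I, \lambda \rangle} x^I$; as $t \to 0$ only the zero-weight terms survive, so $Y = \sum_{\langle I, \lambda \rangle = 0} c_I x^I$ exists in $\pr V^*$ and has $\operatorname{Supp}(Y) = \operatorname{Supp}(g_0 \cdot X_{k,l}) \cap \operatorname{Ann}(\lambda) \subseteq \operatorname{Ann}(\lambda)$. By the closed orbit hypothesis, $Y \in \overline{G \cdot X_{k,l}} = G \cdot X_{k,l}$, so there exists $g \in G$ with $g \cdot X_{k,l} = Y$, giving $\operatorname{Supp}(g \cdot X_{k,l}) \subseteq \operatorname{Ann}(\lambda)$.

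The main obstacle is to promote this containment to the desired equality. The idea is to modify $\lambda$ within $S_{m,n}^{k,l}$: should there be a spurious monomial $x^{I_0} \in \operatorname{Ann}(\lambda) \setminus \operatorname{Supp}(g \cdot X_{k,l})$, one uses the combinatorial description of $S_{m,n}^{k,l}$ from Definition \ref{defn: new fundamental set}---its elements are unique non-trivial solutions to $(m+n-1)$-equation subsystems of $\operatorname{Eq}_{m,n}^{k,l}$, whose hyperplanes are parameterised by differences of monomial exponents---to select a subsystem whose solution $\lambda' \in S_{m,n}^{k,l}$ still annihilates every $I \in \operatorname{Supp}(g \cdot X_{k,l})$ but no longer annihilates $I_0$. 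Iterating removes all spurious annihilators and yields $\operatorname{Ann}(\lambda') = \operatorname{Supp}(g \cdot X_{k,l})$. The delicate point is verifying that each replacement stays within the finite set $S_{m,n}^{k,l}$; this reduces to showing that the relevant subsystems of $\operatorname{Eq}_{m,n}^{k,l}$ remain consistent with a unique non-trivial solution throughout the procedure.
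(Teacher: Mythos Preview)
Your route to the containment $\operatorname{Supp}(g\cdot X_{k,l})\subseteq\operatorname{Ann}(\lambda)$ is correct but genuinely different from the paper's. The paper never takes a limit along a destabilising $\lambda$: instead it invokes the structural fact that a closed strictly semistable orbit has positive-dimensional stabiliser \cite[Remark 8.1(5)]{dolgachev_2003}, so there is a one-parameter subgroup $\lambda$ lying inside $\operatorname{Stab}(X_{k,l})$; since $\lambda(t)$ then fixes $X_{k,l}$ for every $t$, each monomial in the support already has weight zero, and one only needs to conjugate $\lambda$ into the chosen torus. Your argument trades this stabiliser fact for the flow-to-the-limit trick and the closed-orbit hypothesis, which is more elementary and self-contained; the paper's approach is shorter once the Dolgachev citation is granted.

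The gap is the promotion from containment to equality, and your iterative scheme does not close it. Two concrete obstructions: first, perturbing $\lambda$ to $\lambda'$ so as to kill a spurious $I_0$ may introduce \emph{new} spurious monomials (ones with $\langle I,\lambda\rangle\neq 0$ but $\langle I,\lambda'\rangle=0$), so there is no monovariant guaranteeing termination. Second, and more fundamentally, it may be impossible to remove $I_0$ at all with the $g$ you have fixed: if $I_0$ happens to lie in the linear span of the exponent vectors of $\operatorname{Supp}(g\cdot X_{k,l})$, then \emph{every} one-parameter subgroup annihilating the support also annihilates $I_0$, so no refinement within $S_{m,n}^{k,l}$---or indeed any $\lambda$ whatsoever---separates them. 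The paper's own proof is also terse here, simply asserting that a suitable coordinate change together with Theorem \ref{thm: main for divisors} delivers the equality; so you have correctly located where the difficulty lives, but the combinatorial mechanism you propose is not yet an argument.
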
 
\begin{proof}
    Let $X_{k,l}$ be a strictly semistable divisor representing a closed orbit. Then, by \cite[Remark 8.1(5)]{dolgachev_2003} its stabiliser subgroup $\operatorname{Stab}(X_{k,l}) \subset G$ is infinite. This implies that there exists a one-parameter subgroup $\lambda$ whose image lies in $\operatorname{Stab}(X_{k,l})$, so that $\lim_{t \to 0} (\lambda(t) \cdot X_{k,l}) = X_{k,l}$. Hence, $\langle I, \lambda \rangle = 0$ for all $x^I \in \operatorname{Supp}(X_{k,l})$. By a suitable choice of a coordinate system, Theorem \ref{thm: main for divisors} gives a $g \in G$ and a $\lambda \in S_{m,n}^{k,l}$ such that $\operatorname{Supp}(g \cdot X_{k,l}) = \operatorname{Ann}(\lambda)$.
\end{proof}

\subsection{The Centroid Criterion} \label{sec:centroid}

The Centroid Criterion is an efficient method for determining whether a given divisor in $\pr^m \times \pr^n$ is stable (or semistable), without the need for one-parameter subgroups. Our approach here is similar to \cite[\S 7.2]{Mukai_2003}, \cite[Lemma 1.5]{Gallardo_2018} and \cite[Theorem 3.18]{pap22}, which  used this criterion to study GIT stability of divisors and complete intersections in a single projective space. 

We fix two positive integers $k$ and $l$, and define a map $\xi : \Xi_{k,l} \to \Z^{m+n}$ such that for any monomial $x^I \in \Xi_{k,l}$ with $I=(k_0,\dots,k_m,l_0,\dots,l_n)$, we have $\xi(x^I) = (k_0,\dots,k_{m-1},l_0,\dots,l_{n-1})$. 
Since the terms $k_m$ and $l_n$ are completely determined by the conditions $\sum_{i=0}^m k_i = k$ and $\sum_{j=0}^n l_j = l$, the map $\xi$ uniquely identifies each monomial $x^I \in \Xi_{k,l}$ with a point in $\Z^{m+n}$. This gives an integer lattice in $\R^{m+n}$, and for a given divisor $X_{k,l}$ in $\pr^m \times \pr^n$, the set $\xi(\operatorname{Supp}(X_{k,l}))$ is a subset of this lattice. To ease notation, we will denote this subset by $\xi(X_{k,l})$. Finally, we define $\operatorname{Conv}(X_{k,l}$) to be the convex hull of the set $\xi(X_{k,l})$. As $\xi(X_{k,l})$ is a finite set of points in $\R^{m+n}$, the convex hull $\operatorname{Conv}(X_{k,l})$ is automatically compact, and is simply a convex polytope. We denote the interior of this polytope by $\operatorname{Int(Conv}(X_{k,l}))$. We define the \emph{centroid} as 
$$
\mathcal{O}_{k,l} \coloneqq \biggl(\underbrace{\frac{k}{m+1},\dots,\frac{k}{m+1}}_{m},\underbrace{\frac{l}{n+1},\dots,\frac{l}{n+1}}_{n}  \biggr) \in \Q^{m+n}.
$$

\begin{theorem}[Centroid Criterion] \label{thm:centroid}
    A divisor $X_{k,l}$ is semistable (respectively stable) if and only if $\mathcal{O}_{k,l} \in \operatorname{Conv}(X_{k,l})$ (respectively $\mathcal{O}_{k,l} \in \operatorname{Int}(\operatorname{Conv}(X_{k,l}))$).
\end{theorem}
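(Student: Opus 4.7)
The plan is to convert the Hilbert--Mumford numerical criterion (Lemma~\ref{lemma: hilbert-mumford}) into a convex-geometric statement about $\operatorname{Conv}(X_{k,l})$ relative to $\mathcal{O}_{k,l}$, and then invoke the supporting (respectively strictly separating) hyperplane theorem for convex polytopes.

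For a normalised one-parameter subgroup $\lambda$ of $T$ with entries $(r_0,\dots,r_m,s_0,\dots,s_n)$, I would introduce its \emph{reduced} vector
\[
\tilde\lambda \coloneqq (r_0-r_m,\dots,r_{m-1}-r_m,\,s_0-s_n,\dots,s_{n-1}-s_n) \in \mathbb{R}^{m+n},
\]
which lives in the same ambient space as $\xi(\operatorname{Supp}(X_{k,l}))$. Eliminating $k_m$ and $l_n$ using $\sum_i k_i=k$ and $\sum_j l_j=l$ yields $\langle I,\lambda\rangle = \langle\xi(x^I),\tilde\lambda\rangle + r_m k + s_n l$, while the normalisation conditions $\sum r_i=0$, $\sum s_j=0$ give $\langle\mathcal{O}_{k,l},\tilde\lambda\rangle = -r_m k - s_n l$. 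Combining these produces the key identity
\[
\langle I,\lambda\rangle \,=\, \langle \xi(x^I) - \mathcal{O}_{k,l},\,\tilde\lambda\rangle,
\]
and hence $\mu(X_{k,l},\lambda) = \min_{x^I\in\operatorname{Supp}(X_{k,l})}\langle\xi(x^I)-\mathcal{O}_{k,l},\,\tilde\lambda\rangle$.

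From this identity the theorem follows by a two-sided argument. As $\lambda$ varies over normalised 1-PS of $T$, $\tilde\lambda$ sweeps out every rational direction in $\mathbb{R}^{m+n}$ up to positive scaling: any prescribed $\tilde\lambda\in\mathbb{Q}^{m+n}$ is realised by solving for $r_m,s_n$ to enforce $\sum r_i=\sum s_j=0$, clearing denominators, and permuting coordinates via Weyl-group elements of $S_{m+1}\times S_{n+1}\subset G$ to arrange the ordering. Conversely, because $\operatorname{Conv}(X_{k,l})$ is a rational polytope, any supporting or strictly separating hyperplane through $\mathcal{O}_{k,l}$ may be chosen with rational normal. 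Combining these observations with Lemma~\ref{lemma: hilbert-mumford}: $X_{k,l}$ fails to be stable iff there exist $g\in G$ and a non-zero $\tilde\lambda$ with $\langle p-\mathcal{O}_{k,l},\tilde\lambda\rangle\geq 0$ for every $p\in\operatorname{Conv}(g\cdot X_{k,l})$, which by the supporting hyperplane theorem is equivalent to $\mathcal{O}_{k,l}\notin\operatorname{Int}(\operatorname{Conv}(g\cdot X_{k,l}))$. The parallel strict-inequality version, invoking the strict separation theorem for disjoint convex sets, characterises instability as $\mathcal{O}_{k,l}\notin\operatorname{Conv}(g\cdot X_{k,l})$ for some $g$.

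The principal subtlety, and the step I expect to be the main obstacle, is that $\operatorname{Conv}(X_{k,l})$ is only $T$-invariant, whereas $\mathcal{O}_{k,l}$ is fixed; the statement of the theorem must therefore be read as the $G$-invariant assertion that the centroid criterion holds for every representative $g\cdot X_{k,l}$ in the orbit. This meshes with Lemma~\ref{lemma: hilbert-mumford}, where conjugating an arbitrary maximal torus back to $T$ produces the auxiliary $g$, so that the ``for all $g$'' quantifier on the convex-geometric side precisely corresponds to the ``for all maximal tori'' quantifier in the Hilbert--Mumford criterion.
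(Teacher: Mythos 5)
Your proposal is correct and follows essentially the same route as the paper's proof: both directions translate between one-parameter subgroup weights and affine functionals on $\mathbb{R}^{m+n}$ vanishing at $\mathcal{O}_{k,l}$, and then invoke hyperplane separation/support for the rational polytope $\operatorname{Conv}(X_{k,l})$. Your identity $\langle I,\lambda\rangle=\langle\xi(x^I)-\mathcal{O}_{k,l},\tilde\lambda\rangle$ is just a cleaner packaging of the paper's explicit computation with the constants $A,B,\alpha_i,\beta_j$ and $\Psi$, and your closing remark about the $g$-dependence of $\operatorname{Conv}(g\cdot X_{k,l})$ correctly identifies a point the paper's statement and converse direction leave implicit.
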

\begin{proof}
    First suppose $\mathcal{O}_{k,l} \not \in \operatorname{Conv}(X_{k,l})$. Since $\operatorname{Conv}(X_{k,l})$ is a convex polytope, by the Hyperplane Separation Theorem \cite[Examples 2.19,2.20]{convex}, there exists an affine function $\Phi : \mathbb{R}^{m+n} \to \mathbb{R}$ that is positive at points in $\operatorname{Conv}(X_{k,l})$ and zero at $\mathcal{O}_{k,l}$. Moreover, $\Phi$ has the form
    $$
    \Phi(x_0,\dots,x_{m-1},y_0,\dots,y_{n-1}) = \sum_{i=0}^{m-1} \alpha_i \left(x_i - \frac{k}{m+1} \right) + \sum_{j=0}^{n-1} \beta_j \left(y_j - \frac{l}{n+1} \right),
    $$
    where the $\alpha_i$ and $\beta_j$ are constants. Writing $A \coloneqq \sum_{i=0}^{m-1} \alpha_i$ and $B \coloneqq \sum_{j=0}^{n-1} \beta_j$, we have
    $$
    \Phi(x_0,\dots,x_{m-1},y_0,\dots,y_{n-1}) = \sum_{i=0}^{m-1} \alpha_i x_i + \sum_{j=0}^{n-1}\beta_j y_j- \frac{kA}{m+1} - \frac{lB}{n+1}.
    $$
    As the vertices of the polygon $\operatorname{Conv}(X_{k,l})$ have integer coordinates, we may further assume that the coefficients $\alpha_i,\beta_j$ are rational. But, upon multiplying through by a suitable positive integer, we can obtain integer coefficients. Since this results in an affine function that is simply a positive scalar multiple of $\Phi$, it will have the same properties as $\Phi$. Thus, we may assume that all the coefficients $\alpha_i$ and $\beta_j$ are integers without loss of generality.

    We define $r_i = (m+1)(n+1)\alpha_i -(n+1)A$ for $i=0,\dots,m-1$, and $r_m = -(n+1)A$. Similarly, we define $s_j = (m+1)(n+1)\beta_j -(m+1)B$ for $j=0,\dots,n-1$, and $s_n = -(m+1)B$. By construction, we have $r_i \in \Z$ for all $i=0,\dots,m$ with $\sum_{i=0}^m r_i = 0$ and $s_j \in \Z$ for all $j=0,\dots,n$ with $\sum_{j=0}^n s_j =0$. Thus, we can form a one-parameter subgroup $\lambda$ given by $\lambda(t) = (\diag(t^{r_0},\dots,t^{r_m}),\diag(t^{s_0},\dots,t^{s_n}))$.

    For $I=(k_0,\dots,k_m,l_0,\dots,l_n) = (k_0,\dots,k_{m-1},k-\sum_{i=0}^{m-1}k_i,l_0,\dots,l_{n-1},l-\sum_{j=0}^{n-1}l_j) $ associated to some monomial $x^I \in \operatorname{Supp}(X_{k,l})$, we have
    \begin{align*}
        \langle I, \lambda \rangle &= \sum_{i=0}^{m} k_ir_i + \sum_{j=0}^{n} l_js_j \\
        &= (m+1)(n+1)\Bigg[ \sum_{i=0}^{m-1} k_i\alpha_i + \sum_{j=0}^{n-1} l_j\beta_j  -\frac{A}{m+1}\left( \sum_{i=0}^{m-1}k_i + k_m \right) -\frac{B}{n+1}\left( \sum_{j=0}^{n-1}l_j + l_n \right) \Bigg] \\
        &= (m+1)(n+1)\Bigg[ \sum_{i=0}^{m-1} k_i\alpha_i + \sum_{j=0}^{n-1} l_j\beta_j  -\frac{Ak}{m+1} -\frac{Bl}{n+1} \Bigg] \\
        &= (m+1)(n+1)\Phi(k_0,\dots,k_{m-1},l_0,\dots,l_{n-1}).
    \end{align*}
    But $\Phi$ is positive at points in $\operatorname{Conv}(X_{k,l})$, and so we have $\langle I, \lambda \rangle > 0 $ for all $x^I \in \operatorname{Supp}(X_{k,l})$. Hence,
    $$
    \mu(X_{k,l},\lambda) = \min \Big\{ \langle I, \lambda \rangle \mid x^I \in \operatorname{Supp}(X_{k,l}) \Big\} > 0,
    $$ 
    and therefore $X_{k,l}$ is unstable. We have shown that if $\mathcal{O}_{k,l} \not \in \operatorname{Conv}(X_{k,l})$ then $X_{k,l}$ is unstable. The contrapositive then shows that if $X_{k,l}$ is semistable, then $\mathcal{O}_{k,l} \in \operatorname{Conv}(X_{k,l})$. The argument for the case when $X_{k,l}$ is stable is completely analogous; in the above steps, we swap $\operatorname{Conv}(X_{k,l})$ with $\operatorname{Int}(\operatorname{Conv}(X_{k,l}))$ and $>$ with $\ge$.

    Conversely, suppose $X_{k,l}$ is unstable. Then, by Lemma \ref{lemma: hilbert-mumford}, there exists a one-parameter subgroup $\lambda(t) = (\diag(t^{r_0},\dots,t^{r_m}),\diag(t^{s_0},\dots,t^{s_n}))$ such that
    \begin{align*}
        \mu(X_{k,l},\lambda) &=\min \Big \{ \langle I, \lambda \rangle \mid x^I \in \operatorname{Supp}(X_{k,l}) \Big \} \\
        &= \smashoperator[l]{\min_{\prod_{i,j}x_i^{k_i}y_j^{l_j}\in \Xi_{k,l}}} \Bigg\{\sum_{i=0}^m k_ir_i + \sum_{j=0}^n l_js_j  \Bigg\} \\
        &= \smashoperator[l]{\min_{\prod_{i,j}x_i^{k_i}y_j^{l_j}\in \Xi_{k,l}}} \Bigg\{\sum_{i=0}^{m-1} k_ir_i + \sum_{j=0}^{n-1} l_js_j - \left(k-\sum_{i=0}^{m-1} k_i\right)\sum_{i=0}^{m-1}r_i - \left(l-\sum_{j=0}^{n-1} l_i\right)\sum_{j=0}^{n-1}s_j  \Bigg\} \\
        &>0,
    \end{align*} 
    where we used that $r_m = -\sum_{i=0}^{m-1}r_i$ and $s_n = -\sum_{j=0}^{n-1}s_j$. Let $\Psi: \R^{m+n} \to \R$ be the affine function given by
    $$
    \Psi(x_0,\dots,x_{m-1},y_0,\dots,y_{n-1}) = \sum^{m-1}_{i=0} x_ir_i + \sum^{n-1}_{j=0} y_js_j- \Bigg (k-\sum_{i=0}^{m-1} x_i \Bigg )\sum_{i=0}^{m-1}r_i - \Bigg (l-\sum_{j=0}^{n-1} y_j \Bigg)\sum_{j=0}^{n-1}s_j.$$ 
    Then $\Psi$ is positive at all points in $\xi(X_{k,l})$. By convexity, we also have that $\Psi|_{\operatorname{Conv}(X_{k,l})} >0 $. On the other hand, 
    \begin{align*}
    \Psi(\mathcal{O}_{k,l}) &= \frac{k}{m+1}\sum_{i=0}^{m-1} r_i +\frac{l}{n+1}\sum_{j=0}^{n-1}s_j - \Bigg(k-\frac{mk}{m+1}\Bigg)\sum_{i=0}^{m-1}r_i - \Bigg(l-\frac{nl}{n+1}\Bigg)\sum_{j=0}^{n-1}s_j \\
    & = \frac{k}{m+1}\sum_{i=0}^{m-1} r_i +\frac{l}{n+1}\sum_{j=0}^{n-1}s_j - \frac{k}{m+1}\sum_{i=0}^{m-1}r_i - \frac{l}{n+1}\sum_{j=0}^{n-1}s_j \\ &=0.
    \end{align*}
    Therefore, $\mathcal{O}_{k,l} \not\in \operatorname{Conv}(X_{k,l})$. We have shown that if $X_{k,l}$ is unstable, then $\mathcal{O}_{k,l} \not \in \operatorname{Conv}(X_{k,l})$. The contrapositive then shows that if $\mathcal{O}_{k,l} \in \operatorname{Conv}(X_{k,l})$, then $X_{k,l}$ is semistable. Similarly, if $X_{k,l}$ is not stable, then a slight modification to the above argument will show that $\mathcal{O}_{k,l}$ at worst lies on the boundary of $\operatorname{Conv}(X_{k,l})$. Therefore, if $\mathcal{O}_{k,l} \in \operatorname{Int}(\operatorname{Conv}(X_{k,l}))$, then $X_{k,l}$ is stable.
\end{proof}

\begin{remark}
    We should note that all the results of Sections \ref{sec:GIT theorems} and \ref{sec:centroid} generalise naturally to GIT problems of divisors of bidegree $(k_1,\dots, k_l)$ in products of projective spaces $\pr^{m_1} \times \dots \times \pr^{m_l}$. In particular, Theorems \ref{thm: fundamental set}, \ref{thm: main for divisors}, \ref{thm:centroid} and Proposition \ref{thm: polystable divisors} generalise completely using the methods of proof presented in this paper. We choose to omit these descriptions and proofs in order to ease notation and improve readability. We do note that the computer software \cite{our_code} based on the results of this paper has been implemented with this level of generality.
\end{remark}

\subsection{Computer Implementation}\label{sec: computer implementation}

The following gives an algorithm for determining the stability and instability of families of divisors in $\pr^m \times \pr^n$ of some given bidegree $(k,l)$:

\begin{enumerate}[label=\arabic*.]
    \item We begin by computing the fundamental set $S_{m,n}^{k,l}$ of one-parameter subgroups using Definition \ref{defn: new fundamental set}.
    \item For each $\lambda \in S_{m,n}^{k,l}$, we compute the sets $N^{\oplus}(\lambda)$ and $N^+(\lambda)$, as defined in Definition \ref{def: new monomial sets}. 
    \item We then find all the maximal sets with respect to the containment order of sets, and discard the rest. Thus, by Theorem \ref{thm: main for divisors} and Proposition \ref{thm: polystable divisors}, these will describe families of unstable ($N^+(\lambda)$) divisors and not stable ($N^{\oplus}(\lambda)$) divisors.
    \item On each maximal set $N^{\oplus}(\lambda)$, we apply the Centroid Criterion (c.f. Theorem \ref{thm:centroid}) to determine whether the family is a family of strictly semistable divisors. For each family satisfying the Centroid Criterion, we compute the annihilator $\operatorname{Ann}(\lambda)$ defined in Definition \ref{def: new monomial sets}. By Proposition \ref{thm: polystable divisors}, these families represent strictly polystable orbits.
    \item We now classify the families of divisors we have obtained by their singularities, and hence find the stable, polystable and semistable divisors by process of elimination.
\end{enumerate}

\section{GIT of (4,4)-Curves in \texorpdfstring{$\pr^1\times \pr^1$}{}}

We will apply the GIT algorithm detailed above (Section \ref{sec: computer implementation}) to classify the GIT quotient of $(4,4)$-curves in $\mathbb{P}^1_{X,Y}\times \mathbb{P}^1_{Z,W}$. Let $G \coloneqq \operatorname{SL}(2) \times \operatorname{SL}(2)$, and $V \coloneqq |\mathcal{O}_{\pr^1 \times \pr^1}(4,4)|$. We will study the GIT quotient $\mathbb{P} V^* \sslash G$ computationally. As mentioned before, this GIT quotient has been studied before in Shah \cite[\S 4]{Shah_1981} and Laza--O'Grady \cite[Lemma 3.2, Proposition 3.3]{Laza_OGrady_2018}. In this section, we recover the results of Shah and Laza--O'Grady using the different methods our algorithm allows, demonstrating its usefulness and efficiency. In addition, we provide a visual classification of GIT strictly semistable curves, using Theorem \ref{thm:centroid}, and we provide a detailed analysis of the singularity types of semistable, polystable and stable curves, which is not present in the aforementioned works.

\subsection{Algorithm Output}\label{sec: algorithm output}

We will denote a normalised one-parameter subgroup $\lambda$ in $G$ by $\lambda = (u,-u,v,-v)$. For convenience, we also introduce the notation $\Tilde{\lambda} = (v,-v,u,-u)$. By our algorithm (Definition \ref{defn: new fundamental set} and Theorem \ref{thm: fundamental set}), we obtain the fundamental set $S_{1,1}^{4,4}$, which has 13 elements. The relevant one-parameter subgroups in $S_{1,1}^{4,4}$ that give maximal (semi-)destabilised sets are given by
\begin{align*}
\lambda_0 &= (1, -1, 0, 0),\\
\lambda_1 &= (1, -1, 1, -1),\\
\lambda_2 &= (2, -2, 1, -1),\\
\lambda_3 &= (3, -3, 1, -1),\\
\lambda_4 &= (3, -3, 2, -2),
\end{align*}
and $\Tilde{\lambda}_i$ for $i=0,2,3,4$. For any positive integer $k$, monomials of bidegree $(k,k)$ in $\pr^1 \times \pr^1$ are symmetric in the sense that any pair of one-parameter subgroups $\lambda$ and $\Tilde{\lambda}$ will give projectively isomorphic maximal (semi-)destabilising sets. Therefore, we only need to consider the one-parameter subgroups $\lambda_i$ for $i=0,\dots,4$. In particular, notice that we obtain a different description of the set of one-parameter subgroups used in \cite[\S 3.1]{Laza_OGrady_2018}, using Definition \ref{defn: new fundamental set}. We should note, that the three one-parameter subgroups used in \cite[\S 3.1]{Laza_OGrady_2018} are contained in $S_{1,1}^{4,4}$. Overall, we have three maximal semi-destabilised sets coming from the one-parameter subgroups $\lambda_0, \lambda_1$ and $\lambda_2$, and two maximal destabilised sets coming from the one-parameter subgroups $\lambda_3$ and $\lambda_4$. These sets of monomials are listed in Table \ref{tab:unstable-nonstable}.

\begingroup
\renewcommand{\arraystretch}{1.1}
\begin{table}[H]
   \centering
   \begin{NiceTabular}{|c|l|l|l|l|l|}
     \hline
      Families & $N^{\oplus}(\lambda_0)$ & $N^{\oplus}(\lambda_1)$ & $N^{\oplus}(\lambda_2)$ & $N^{+}(\lambda_3)$ & $N^{+}(\lambda_4)$ \\ 
      \hline
      \Block{18-1}{\rotate monomials}
       & $X^4Z^4$ & $X^4Z^4$ & $X^4Z^4$ & $X^4Z^4$ & $X^4Z^4$ \\
       & $X^4Z^3W$ & $X^4Z^3W$ & $X^4Z^3W$ & $X^4Z^3W$ & $X^4Z^3W$ \\
       & $X^4Z^2W^2$ & $X^4Z^2W^2$ & $X^4Z^2W^2$ & $X^4Z^2W^2$ & $X^4Z^2W^2$ \\
       & $X^4ZW^3$ & $X^4ZW^3$ & $X^4ZW^3$ & $X^4ZW^3$ & $X^4ZW^3$ \\
       & $X^4W^4$ & $X^4W^4$ & $X^4W^4$ & $X^4W^4$ & $X^4W^4$ \\
       & $X^3YZ^4$ & $X^3YZ^4$ & $X^3YZ^4$ & $X^3YZ^4$ & $X^3YZ^4$ \\
       & $X^3YZ^3W$ & $X^3YZ^3W$ & $X^3YZ^3W$ & $X^3YZ^3W$ & $X^3YZ^3W$ \\
       & $X^3YZ^2W^2$ & $X^3YZ^2W^2$ & $X^3YZ^2W^2$ & $X^3YZ^2W^2$ & $X^3YZ^2W^2$ \\
       & $X^3YZW^3$ & $X^3YZW^3$ & $X^3YZW^3$ & $X^3YZW^3$ & $X^3YZW^3$ \\
       & $X^3YW^4$ & --- & $X^3YW^4$ & $X^3YW^4$ & --- \\
       & $X^2Y^2Z^4$ & $X^2Y^2Z^4$ & $X^2Y^2Z^4$ & $X^2Y^2Z^4$ & $X^2Y^2Z^4$ \\
       & $X^2Y^2Z^3W$ & $X^2Y^2Z^3W$ & $X^2Y^2Z^3W$ & $X^2Y^2Z^3W$ & $X^2Y^2Z^3W$ \\
       & $X^2Y^2Z^2W^2$ & $X^2Y^2Z^2W^2$ & $X^2Y^2Z^2W^2$ & --- & --- \\
       & $X^2Y^2ZW^3$ & --- & --- & ---& --- \\
       & $X^2Y^2W^4$ & --- & --- & --- & --- \\
       & --- & $XY^3Z^4$ & $XY^3Z^4$ & --- & $XY^3Z^4$ \\
       & --- &  $XY^3Z^3W$ & --- & --- & --- \\
       & --- &  $Y^4Z^4$ & --- & --- & --- \\
       \hline
   \end{NiceTabular}
   \caption{Unstable and not-stable families and their monomials.}
   \label{tab:unstable-nonstable}
\end{table} 
\endgroup

In particular, the Centroid Criterion \ref{thm:centroid} shows that the three not-stable families $N^{\oplus}(\lambda_0)$, $N^{\oplus}(\lambda_1)$ and $N^{\oplus}(\lambda_2)$ are strictly semistable, as illustrated in Figure \ref{fig:centroid_lambda2}.

\begin{figure}[h]
\centering
\includegraphics[scale=0.375]{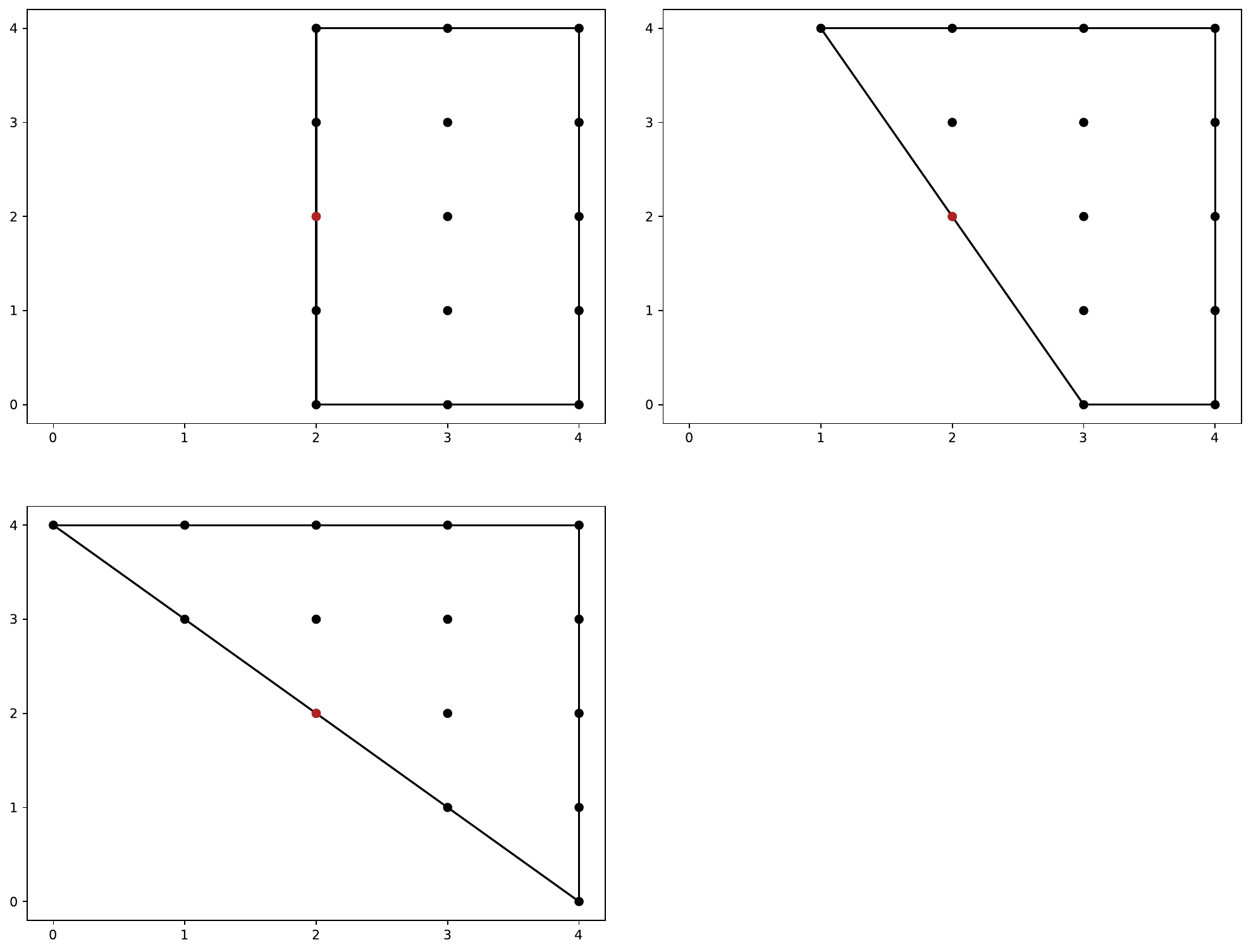}
\caption{Centroid criterion for $N^{\oplus}(\lambda_0)$ (top left), $N^{\oplus}(\lambda_2)$ (top right) and $N^{\oplus}(\lambda_1)$ (bottom left).}
\label{fig:centroid_lambda2}
\end{figure}

Here, the red point is the centroid $(2,2)$, and we can see that, in all three cases, it lies on the boundary of the polygon. The potential closed orbits, i.e. the monomials of zero weight given by $\operatorname{Ann}(\lambda_i)$ for $i=0,1,2$, are listed in Table \ref{tab:closed orbits}. 

\begingroup
\renewcommand{\arraystretch}{1.1}
\begin{table}[H]
   \centering
   \begin{NiceTabular}{|c|l|l|l|}
     \hline
      Families & $\operatorname{Ann}(\lambda_0)$ & $\operatorname{Ann}(\lambda_1)$ & $\operatorname{Ann}(\lambda_2)$ \\ 
      \hline
      \Block{5-1}{\rotate monomials}
      & $X^2Y^2Z^4$ & $Y^4Z^4$ & $XY^3Z^4$  \\
      & $X^2Y^2Z^3W$ &  $XY^3Z^3W$ & $X^3YW^4$  \\
      & $X^2Y^2Z^2W^2$ & $X^2Y^2Z^2W^2$ & $X^2Y^2Z^2W^2$  \\
      & $X^2Y^2ZW^3$ & $X^3YZW^3$ &  \\
      & $X^2Y^2W^4$ &  $X^4W^4$ & \\
      \hline
    \end{NiceTabular}
    \caption{Potential closed orbits.}
    \label{tab:closed orbits}
\end{table}
\endgroup

By contrast, Figure \ref{fig:centroid_lambda1} shows that the Centroid Criterion fails for $N^+(\lambda_3)$ and $N^+(\lambda_4)$, thus confirming that these describe unstable families. Figures \ref{fig:centroid_lambda2} and \ref{fig:centroid_lambda1} also provide visual confirmation that, up to projective isomorphism, our five sets capture all possible maximal (semi-)destabilised sets of monomials.

\begin{figure}[h]
\includegraphics[scale=0.375]{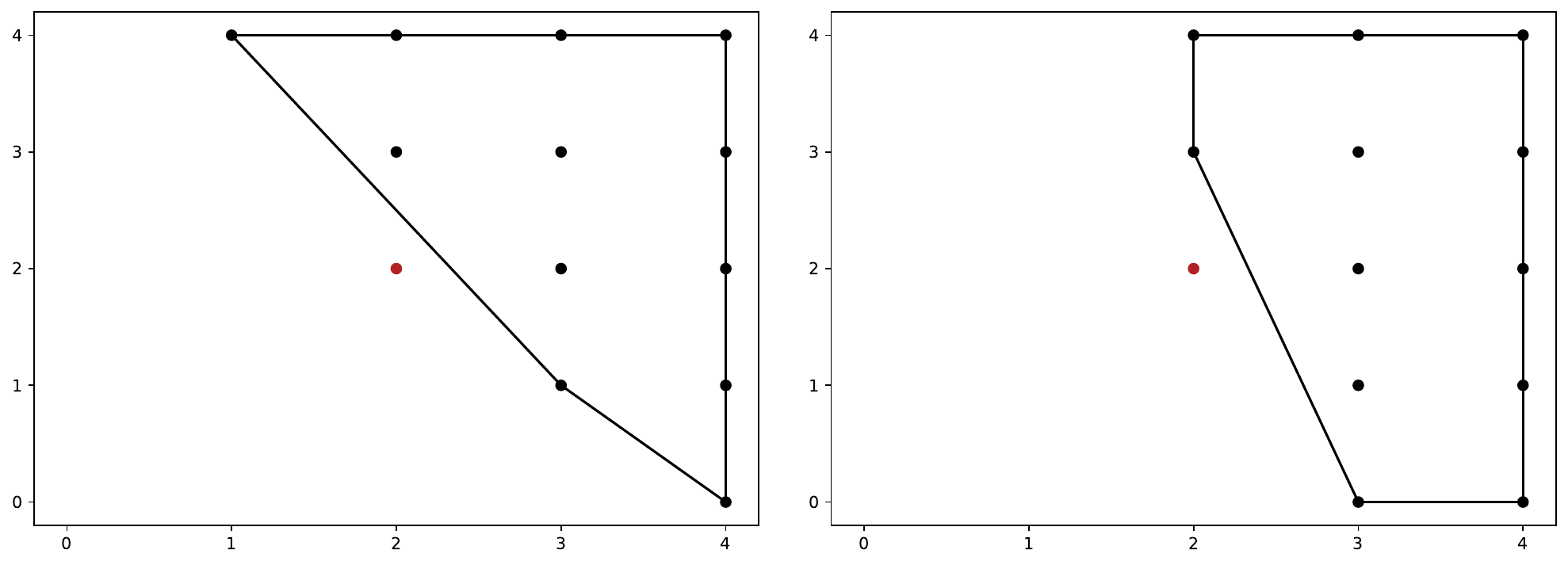}
\caption{Centroid criterion for $N^{+}(\lambda_4)$ (left) and $N^{+}(\lambda_3)$ (right).}
\label{fig:centroid_lambda1}
\end{figure}

\subsection{Some Preliminaries on Singularities.} \label{sec: prelims on singularity theory}

We will give some brief preliminaries on singularity theory, which will be used throughout the later chapters.

\begin{definition}[{\cite[p.88]{Arnold1976}}]
A class of singularities $T_2$ is \emph{adjacent} to a class $T_1$, and one writes $T_1 \leftarrow T_2$, if every germ of $f \in T_2$ can be locally deformed into a germ in $T_1$ by an arbitrary small deformation. We say that the singularity $T_2$ is \emph{worse} than $T_1$, or that $T_2$ is a \emph{degeneration} of $T_1$.
\end{definition}

The degenerations of the isolated singularities that appear in a curve of bidegree $(k,l)$ in $\mathbb{P}^1\times\mathbb{P}^1$ are described in Figure \ref{fig:dynkin} (for details, see \cite[p.92, 93, 95]{Arnold1976} and \cite[\S 13]{Arnol_d_1975}). The above theory considers only local deformations of singularities. When we study degenerations in the GIT quotient, we are interested in global deformations. Thankfully, in the particular case of study of this paper, any local deformation of isolated singularities is induced by a global deformation.  

\begin{figure}[h]
    \centering
    \begin{tikzcd}
    \mathbf{A} & \mathbf{D}\arrow[l] & \mathbf{E}_6\arrow[l]& \mathbf{E}_7\arrow[l]& \mathbf{E}_8\arrow[l]\\
    & &\mathbf{(P)} \arrow[u] &\mathbf{(X)} \arrow[u]& \mathbf{(J)} \arrow[u]
    \end{tikzcd}
    \caption{Some degenerations of germs of isolated singularities.}
    \label{fig:dynkin}
\end{figure}
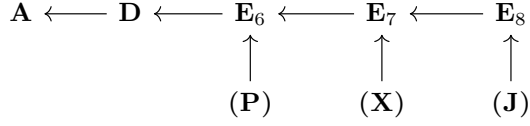

\subsection{Stability Analysis}\label{sec: stability analysis}
We first tackle unstable curves.

\begin{proposition}[Unstable curves]\label{unstable characterisation}
    Let $C$ be a $(4,4)$-curve in $\mathbb{P}^1\times\mathbb{P}^1$. Then $C$ is unstable if and only if one of the following holds:
    \begin{enumerate}
        \item $C$ contains a double ruling, such that the residual $(2,4)$-curve $C'$ is smooth and intersects the double ruling in two points.
        \item $C$ contains a $(1,0)$-ruling, such that the residual $(3,4)$-curve $C'$ intersects the ruling in a unique point with multiplicity $4$ that is also a singular point of $C'$, and is a $\mathbf{Z}_{12}$ singularity of $C$.
    \end{enumerate}
\end{proposition}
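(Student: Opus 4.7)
The plan is to apply Theorem \ref{thm: main for divisors} together with the algorithm output of Section \ref{sec: algorithm output}. Those results say that a $(4,4)$-curve $C$ is unstable if and only if, after acting by some $g \in G$, its equation $F$ is supported in one of the two maximal destabilised sets $N^+(\lambda_3)$ or $N^+(\lambda_4)$ of Table \ref{tab:unstable-nonstable}. The task is thus to translate each of these two combinatorial conditions into the geometric statements (1) and (2), and to verify that every curve of type (1) or (2) can be brought by a suitable $g \in G$ into the corresponding destabilising support.

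First I would analyse $N^+(\lambda_3)$. Every monomial in it contains $X^2$, so any $F$ supported there factors as $F = X^2 G$ with $G$ of bidegree $(2,4)$; thus $C$ contains the $(1,0)$-ruling $\{X=0\}$ with multiplicity two, and the residual curve is $C' = \{G=0\}$. Restricting to $X=0$ yields $G(0,Y,Z,W) = Y^2 Z^3(\alpha Z + \beta W)$, so generically $C' \cap \{X=0\}$ consists of the two points $(0:1)\times(0:1)$ and $(0:1)\times(-\beta:\alpha)$. I would then check by a Jacobian computation on the supported monomials that generic such $G$ define a smooth $C'$, producing case (1). Conversely, for any curve of the form $X^2G$ with $C'$ smooth and meeting the double ruling in two points, one may move one of these points to $(0:1)\times(0:1)$ with local multiplicity three and thereby land the equation inside $N^+(\lambda_3)$.

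Next I would analyse $N^+(\lambda_4)$. Every monomial is divisible by $X$, so $F = X H$ with $H$ of bidegree $(3,4)$, and the only monomial of $H$ with no $X$ factor allowed is $Y^3 Z^4$; hence $H(0,Y,Z,W) = c\, Y^3 Z^4$ and $C' = \{H=0\}$ meets $\{X=0\}$ only at $p = (0:1)\times(0:1)$, with intersection multiplicity $4$. Passing to local affine coordinates $x = X/Y$, $z = Z/W$ near $p$, the allowed monomials force the lowest-order part of $H$ to be a generic linear combination of $x^3,\, x^2 z,\, x z^3,\, z^4$, showing $p$ is singular on $C'$, so the leading form of $F = xH$ becomes
\[
x\bigl(c_{3,0}\,x^3 + c_{2,1}\,x^2 z + c_{1,3}\,x z^3 + c_{0,4}\,z^4\bigr).
\]
After an invertible linear change of variables in $(x,z)$ absorbing the mixed terms, this reduces to the normal form $x(x^3+z^4)$ of Arnold's $\mathbf{Z}_{12}$ singularity, which gives case (2). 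Conversely, placing the $(1,0)$-ruling at $\{X=0\}$ and the $\mathbf{Z}_{12}$-point at $p$ forces vanishing of exactly those monomials that lie outside $N^+(\lambda_4)$.

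The main obstacle will be the local-analytic identification of the singularity in case (2) as $\mathbf{Z}_{12}$, which requires the Newton-polygon computation together with an explicit coordinate change in the completed local ring to land in Arnold's normal form. A secondary subtlety occurs on the boundary of case (1), when $\alpha=0$ so that $C'$ meets $\{X=0\}$ at a single point with multiplicity four: the resulting divisor is non-reduced with a one-dimensional singular locus, and one has to argue, using the adjacency diagram of Figure \ref{fig:dynkin} and semicontinuity of singularity type, that such degenerations lie in the closure of the loci already described by (1) and (2) and produce no new component of the unstable stratum.
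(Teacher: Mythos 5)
Your overall strategy is exactly the paper's: reduce via Theorem \ref{thm: main for divisors} and Table \ref{tab:unstable-nonstable} to the two maximal destabilised sets, factor out the ruling ($X^2$ for $N^+(\lambda_3)$, $X$ for $N^+(\lambda_4)$), and read off the geometry of the residual curve and its intersection with the ruling. The analysis of $N^+(\lambda_3)$ matches the paper's.

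There is, however, a genuine error in your treatment of $N^+(\lambda_4)$, namely the identification of the singularity. In the local coordinates $x=X/Y$, $z=Z/W$ at $p$, the monomials of $H$ allowed by $N^+(\lambda_4)$ give $h = a\,z^4 + b\,xz^3 + c\,x^2z + d\,x^3 + \cdots$, so the \emph{lowest-order} part of $h$ is the degree-$3$ form $c\,x^2z+d\,x^3 = x^2(cz+dx)$, not a ``generic linear combination of $x^3, x^2z, xz^3, z^4$'' (those monomials have degrees $3,3,4,4$). Consequently $C'$ has multiplicity $3$ at $p$ with tangent cone a double line plus a transverse line — this is the $\mathbf{D}_5$ point the paper identifies — and $F=xh$ decomposes as the line $\{x=0\}$ together with the $\mathbf{D}_5$ germ $\{z=0\}\cup\{x^2+z^3=0\}$ (after normalising), i.e.\ $xz(x^2+z^3)=x^3z+xz^4$, which is Arnold's $\mathbf{Z}_{12}$ with $\mu=12$ and tangent cone $x^3z$. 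Your claimed normal form $x(x^3+z^4)=x^4+xz^4$ is a different singularity: its tangent cone is the quadruple line $x^4$ and its Milnor number is $13$ (it is Arnold's $\mathbf{W}_{13}$). No linear — indeed no analytic — change of variables can ``absorb the mixed terms'' $x^3z$ and $x^2z^3$ into $x^4+xz^4$, because such a change would have to alter the degree-$4$ tangent cone $x^3(dx+cz)$ into $x^4$. The term you are discarding, $c\,x^3z$ (coming from the monomial $X^3YZW^3\in N^+(\lambda_4)$), is precisely what makes the generic member $\mathbf{Z}_{12}$ rather than the strictly worse $\mathbf{W}_{13}$, which only occurs on the degenerate locus $c=0$. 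A correct argument either computes the Milnor number and branch structure directly (as above), or, as the paper does, verifies the normal form $x^3y+xy^4+ax^2y^3$ by machine; your Newton-polygon route works too, but you must take the principal part along the Newton boundary with vertices $(1,4)$, $(3,1)$, $(4,0)$ rather than a would-be homogeneous leading form.
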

\begin{proof}
    By the discussion in Section \ref{sec: algorithm output}, Theorem \ref{thm: main for divisors} and Table \ref{tab:unstable-nonstable}, an unstable curve $C$ is given by the monomials of either $N^+(\lambda_3)$ or $N^+(\lambda_4)$, i.e. by one of the following two equations:
    \begin{align}   
        f_1 = \,\, &a_1XY^3Z^4 +a_2X^2Y^2Z^3W+ a_3X^2Y^2Z^4 +a_4X^3YZW^3 + a_5X^3YZ^2W^2 + \label{eq: unstable 1} \\
        &a_6X^3YZ^3W + a_7X^3YZ^4 + a_8X^4W^4 + a_9X^4ZW^3 + \notag \\
        &a_{10}X^4Z^2W^2 + a_{11}X^4Z^3W + a_{12}X^4Z^4  \notag \\
        = \, &X\big(a_1Y^3Z^4 +a_2XY^2Z^3W+ a_3XY^2Z^4 +a_4X^2YZW^3 + a_5X^2YZ^2W^2 + \notag \\
        &a_6X^2YZ^3W + a_7X^2YZ^4 + a_8X^3W^4 + a_9X^3ZW^3 + \notag \\
        &a_{10}X^3Z^2W^2 + a_{11}X^3Z^3W + a_{12}X^3Z^4 \big), \notag
        \shortintertext{(corresponding to $N^+(\lambda_4)$), or}
        f_2 = \,\, &a_1X^4Z^4 + a_2X^3YZ^2W^2 + a_3X^3YZ^3W + a_4X^4Z^3W + a_5X^2Y^2Z^4 + \label{eq: unstable 2} \\ 
        &a_6X^4Z^2W^2 + a_7X^3YZ^4 + a_8X^4ZW^3 + a_9X^3YW^4 + a_{10}X^2Y^2Z^3W + \notag \\ &a_{11}X^4W^4 + a_{12}X^3YZW^3 \notag \\
        = \, &X^2\big(a_1X^2Z^4 + a_2XYZ^2W^2 + a_3XYZ^3W + a_4X^2Z^3W + a_5Y^2Z^4 + \notag \\
        &a_6X^2Z^2W^2 + a_7XYZ^4 + a_8X^2ZW^3 + a_9X^2YW^2 + a_{10}Y^2Z^3W + \notag \\
        &a_{11}X^2W^4 + a_{12}XYZW^3 \big), \notag
    \end{align}  
    (corresponding to $N^+(\lambda_3)$), where the $a_i$ are coefficients which are not all zero.
    
    Equation \eqref{eq: unstable 1} represents a reducible curve $C$ with one ruling $l_1 = \{X=0\}$. The residual $(3,4)$-curve $C'$ is singular with $\mathbf{D}_5$ singularity at $P = ([0:1],[0:1])$; the intersection with the ruling is given by $C'\cap l_1 = P=([0:1], [0:1])$ with multiplicity $4$. The point $P$ is the unique singular point of $C'$, and the point $P$ is a $\mathbf{Z}_{12}$ singularity, i.e. the singularity with normal form $x^3y +xy^4 +ax^2y^3$    
    (see \cite[p. 93]{Arnold1976} for more details), as it is the intersection of a $\mathbf{D}_5$ singularity with a double line, as required. The normal form of this singularity can also be verified via the computational mathematical software MAGMA.

    Equation \eqref{eq: unstable 2} represents a reducible curve $C$ with a double ruling $\{X^2=0\}$. The residual $(2,4)$-curve $C'$ is smooth. The intersection $X\cap C'$ with the double ruling is the two points $([0:1],[0:1])$ and $([0:1],[1:-1])$. 
 \end{proof}

We will now classify strictly semistable $(4,4)$-curves.

 \begin{proposition}[Strictly semistable curves]\label{strictly semistable characterisation}
    Let $C$ be a $(4,4)$-curve in $\mathbb{P}^1\times\mathbb{P}^1$. Then $C$ is strictly semistable if and only if one of the following holds:
    \begin{enumerate}
        \item $C$ is irreducible with one $\mathbf{X}_9$ singularity (multiplicity $4$);
        \item $C$ is reducible, containing one ruling, with one $\mathbf{J}_{2,0}$ singularity (multiplicity $3$) at the intersection of the ruling and the residual curve;
        \item $C$ is reducible and non-reduced, containing a double ruling. The resulting $(2,4)$-curve $C'$ is smooth, intersecting the double ruling at 4 points given by solutions of the generic degree 4 form in $\pr^1$.
    \end{enumerate}
\end{proposition}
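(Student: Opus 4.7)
The plan is to apply Theorem \ref{thm: main for divisors} combined with the output of the algorithm collected in Section \ref{sec: algorithm output}. A strictly semistable curve is semistable but not stable; by Theorem \ref{thm: main for divisors}, not-stable means that after some $g \in G$ the support of $C$ is contained in one of the three maximal sets $N^{\oplus}(\lambda_0)$, $N^{\oplus}(\lambda_1)$, $N^{\oplus}(\lambda_2)$, and by the Centroid Criterion (Theorem \ref{thm:centroid}, together with Figure \ref{fig:centroid_lambda2}) each of these families is genuinely semistable. So the task reduces to writing the general polynomial supported on each $N^{\oplus}(\lambda_i)$, verifying it is not unstable (i.e.\ not, after further action, contained in $N^+(\lambda_3)$ or $N^+(\lambda_4)$ from Proposition \ref{unstable characterisation}), and then translating the equation into the geometric/singularity statement of the proposition.

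Case by case: for $\lambda_0 = (1,-1,0,0)$, every monomial listed in Table \ref{tab:unstable-nonstable} is divisible by $X^2$, so the generic member factors as $X^2 \cdot C'$ with $C'$ a generic $(2,4)$-curve; verifying that $C'$ is smooth for generic coefficients is a standard Bertini/dimension count, and its intersection with the double ruling $\{X^2=0\}$ is cut out by the restriction to $X=0$, which is a generic quartic form in $Z,W$ with four distinct roots. This yields case (3). For $\lambda_2 = (2,-2,1,-1)$, every monomial is divisible by $X$ (one checks directly from the list), so $C = \{X=0\} \cup C'$ with $C'$ a $(3,4)$-curve; the monomials satisfy $2a + b \geq 6$ where $a$ is the $X$-exponent and $b$ the $Z$-exponent, forcing multiplicity $\geq 3$ at the point $P=([0:1],[0:1])$ (achieved by $X^3Y\cdot Z^0 W^4$). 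Local normal form analysis at $P$—which can be verified using MAGMA as in the proof of Proposition \ref{unstable characterisation}—shows that the singularity is of type $\mathbf{J}_{2,0}$, yielding case (2). For $\lambda_1 = (1,-1,1,-1)$, the monomials in $N^\oplus(\lambda_1)$ are exactly those with $a+b \geq 4$; this forces the point $P=([0:1],[0:1])$ to have multiplicity $\geq 4$ and a generic such member is irreducible (this can be checked by exhibiting a single irreducible member of the family). The multiplicity-$4$ tangent cone, again via local computation/MAGMA, is of type $\mathbf{X}_9$, giving case (1).

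The last step is to check that the resulting loci really are strictly semistable and not accidentally unstable. This amounts to showing that a generic member of each $N^{\oplus}(\lambda_i)$ ($i=0,1,2$) is not $G$-equivalent to any element of $N^+(\lambda_3)$ or $N^+(\lambda_4)$; by comparison with Proposition \ref{unstable characterisation}, the unstable curves have strictly worse singularities (a $\mathbf{Z}_{12}$-point on a triple ruling + line configuration, or a smooth residual quadric meeting a double ruling transversely in only two points instead of four), so the geometric descriptions in cases (1)--(3) here are mutually disjoint from the unstable descriptions. Conversely, if $C$ is strictly semistable, Theorem \ref{thm: main for divisors} puts $g\cdot C$ in one of the three families, and the singularity/component analysis above identifies which of (1)--(3) it satisfies.

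The main obstacle is the singularity identification: reading off that the multiplicity-$3$ point on the residual $(3,4)$-curve has normal form $x^3 + bx^2y^2 + y^6$ (type $\mathbf{J}_{2,0}$), and that the multiplicity-$4$ point on the irreducible member of $N^{\oplus}(\lambda_1)$ has normal form of type $\mathbf{X}_9$, requires an honest local calculation. I would reduce each equation to a Weierstrass-style normal form by a sequence of affine coordinate changes around $P=([0:1],[0:1])$, using the genericity of the higher-order coefficients to absorb unwanted terms, and fall back on the classification in Arnold \cite{Arnold1976} (cf.\ Figure \ref{fig:dynkin}) to name the singularity; this is also the step one would offload to MAGMA, as was done in Proposition \ref{unstable characterisation}.
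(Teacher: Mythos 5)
Your proposal is correct and takes essentially the same route as the paper's proof: reduce via Theorem \ref{thm: main for divisors} and the Centroid Criterion to the three maximal families $N^{\oplus}(\lambda_0)$, $N^{\oplus}(\lambda_1)$, $N^{\oplus}(\lambda_2)$, then read off the geometric description of each (double ruling with smooth $(2,4)$-residual, irreducible with a multiplicity-$4$ point, single ruling with a multiplicity-$3$ point) and identify the $\mathbf{X}_9$ and $\mathbf{J}_{2,0}$ normal forms by a local computation/MAGMA. Your additional check that these loci are not accidentally unstable is a small refinement the paper leaves implicit, but the substance of the argument is the same.
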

\begin{proof}
    From the above discussion, Theorem \ref{thm: main for divisors} and the Centroid Criterion (Theorem \ref{thm:centroid}), a curve $C$ is strictly semistable if and only if its equation is given by the monomials of $N^{\oplus}(\lambda_0)$ or $N^{\oplus}(\lambda_1)$ or $N^{\oplus}(\lambda_2)$. The monomials of $N^{\oplus}(\lambda_0)$ define a singular, reducible and non-reduced curve, with a double ruling $\{X^2=0\}$. The residual $(2,4)$-curve $C'$ is smooth and intersects the ruling $X\cap C'$ at $4$ points given by solutions of the generic degree 4 form in $\mathbb{P}^1$.
    
    The monomials of $N^{\oplus}(\lambda_1)$ define a singular irreducible curve with singularity at the point  $P=([0:1], [0:1])$, with multiplicity $4$. In particular, this singularity is an $\mathbf{X}_9$ singularity, via the mathematical software MAGMA.

    Similarly, the monomials of $N^{\oplus}(\lambda_2)$ define a singular reducible curve with singularity at the point  $P=([0:1], [0:1])$, with multiplicity $3$. This curve contains a $(1,0)$-ruling $l_1 = \{X = 0\}$. The residual curve $C'$ intersects $l_1$ at $P$, which is a $\mathbf{J}_{2,0}$ singularity since it has normal form $x^3+bx^2y^2+y^6$
    (see \cite[p. 95]{Arnold1976}), via MAGMA.
\end{proof}

We will now classify GIT strictly polystable orbits. By Proposition \ref{thm: polystable divisors}, if a curve $C$ belongs to a strictly polystable orbit, then there exist $g \in G, \lambda \in S_{1,1}^{4,4}$ such that $\operatorname{Supp}(g \cdot C) = \operatorname{Ann}(\lambda)$. From the discussion of Section \ref{sec: algorithm output}, we have three annihilator sets of monomials, $\operatorname{Ann}(\lambda_0)$, $\operatorname{Ann}(\lambda_1)$ and $\operatorname{Ann}(\lambda_2)$, that define curves with the following equations:

\begin{align}
    f_3 = \, \, &a_1X^2Y^2Z^4 + a_2X^2Y^2Z^3W + a_3X^2Y^2Z^2W^2 + a_4X^2Y^2ZW^3 + \label{eq: polystable 0} \\
    &a_5X^2Y^2W^4, \notag \\
    f_4 = \, \, &a_1X^2Y^2Z^2W^2 + a_2X^3YZW^3 + a_3X^4W^4 + a_4Y^4Z^4 + a_5XY^3Z^3W, \label{eq: polystable 1} \\
    \shortintertext{and}
    f_5 = \, \, &a_1X^2Y^2Z^2W^2 + a_2XY^3Z^4 + a_3X^3YW^4. \label{eq: polystable 2}
\end{align}

Equation \eqref{eq: polystable 0} defines a family of strictly semistable curves that are degenerations of the strictly semistable curves with double ruling (Proposition \ref{strictly semistable characterisation}.3). Equation \eqref{eq: polystable 1} defines a family of strictly semistable curves that are degenerations of the strictly semistable curves with singular point with multiplicity $4$ (Proposition \ref{strictly semistable characterisation}.1), while Equation \eqref{eq: polystable 2} defines a family of strictly semistable curves that are degenerations of the strictly semistable curves with singular point with multiplicity $3$ (Proposition \ref{strictly semistable characterisation}.2). 

We will first study the curves $C_3 = \{f_3 = 0\}$. Notice that 
$$f_3 = X^2Y^2(a_1Z^4 + a_2Z^3W + a_3Z^2W^2 + a_4ZW^3 + a_5W^4),$$
so that $C_3$ is a two-dimensional family of curves such that it is reducible and non-reduced and has two  double rulings $\{X^2=0\}$ and $\{Y^2=0\}$. The resulting $(0,4)$-curve $C'$ is smooth, and is the generic degree 4 polynomial in $\mathbb{P}^1$. The intersections $\{X=0\}\cap C'$ and $\{Y=0\}\cap C'$ are each given by $4$ points which are solutions of the generic degree $4$ polynomial in $\mathbb{P}^1$. We will call such curves \emph{maximally degenerate 4-curves}. Verifying that $C_3$ is polystable is simple; since $C_3$ is strictly semistable, the stabiliser subgroup of $C_3$, namely $G_{C_3}\subset G$, is infinite (see \cite[Remark8.1 (5)]{dolgachev_2003}). Hence, the dimension of the stabiliser is maximal, and the orbit is closed, i.e. $C_3$ is polystable. 

We next study the curves $C_5 = \{f_5 = 0\}$. Notice that up to $G$-action we can write 
$$f_5 = XY(XYZ^2W^2+Y^2Z^4+X^2W^4),$$
such that $C_5$ is the unique curve with two $(1,0)$-rulings $L_1 = \{X = 0\}$, $L_2 = \{Y = 0\}$, such that $C_4 = L_1\cup L_2 \cup C'$, where $C'$ is the residual singular $(2,4)$-curve with two $\mathbf{A}_3$ singularities at $P = ([0 : 1] , [0 : 1])$ and  $Q = ([1 : 0], [1 : 0])$, which decomposes into two cubics (see Figure \ref{fig:J-curves}). Furthermore, $L_1\cap C ' = P$ and $L_2\cap C ' = Q$ are the only singular points on $C_5$ which are $\mathbf{J}_{2,0}$ singularities (see \cite[p. 95]{Arnold1976} for more details). We will call this curve $C_5$ a \emph{maximally degenerate $\mathbf{J}_{2,0}$-curve}. To prove that $C_5$ is polystable, notice that the stabiliser subgroup of $C_5$, namely $G_{C_5}\subset G$, is infinite (see \cite[Remark 8.1 (5)]{dolgachev_2003}), i.e. the dimension of the stabiliser is maximal, and the orbit is closed. This implies that $C_5$ is polystable. 

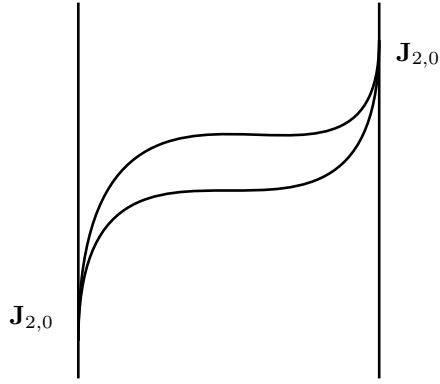
\begin{figure} 
    \centering
\begin{tikzpicture}
\draw[line width=1pt] (0,0) .. controls (0,4) and (4,0) .. (4,4);
\draw[line width=1pt] (0,0) .. controls (0,5) and (4,1) .. (4,4);
\draw[line width=1pt] (0,-0.5) -- (0,4.5);
\draw[line width=1pt] (4,-0.5) -- (4,4.5);
\node at (-0.2,0.3) [left] {$\mathbf{J}_{2,0}$};
  \node at (4.1,3.8) [right] {$\mathbf{J}_{2,0}$};
\end{tikzpicture}
\caption{A maximally degenerate $\mathbf{J}_{2,0}$-curve.}
\label{fig:J-curves}
\end{figure}

For the curves $C_4 = \{f_4 = 0\}$ defined by Equation \eqref{eq: polystable 1}, we have the following analysis. Observe that 
$$f_4 = L_1L_2L_3L_4
    = a_1X^2Y^2Z^2W^2 + a_2X^3YZW^3 + a_3X^4W^4 + a_4Y^4Z^4 + a_5XY^3Z^3W,$$
where the $L_i$ are homogeneous forms of bidegree $(1,1)$. This defines a $3$-dimensional family of strictly semistable points. We call such curves $X$-curves, because the generic $X$-curve, i.e., the curve where $L_i\neq L_j$ for all $i$, $j$, has two $\mathbf{X}_9$ singularities (locally the intersection of $4$ lines). Depending on the choice of coefficients, an $X$-curve can either be a quadruple conic (i.e. $L_i = L_j$ for all $i$, $j$), $4L$, a triple conic and a $(1,1)$-curve which are non-tangent,  $3L+L_1$, two non-tangent double conics, $2L_1+2L_2$, or a double conic and two $(1,1)$-curves which are non-tangent,  $2L+L_1+L_2$. In addition, $f_4$ is a degeneration of the strictly semistable curve with a singularity with multiplicity $4$ (Proposition \ref{strictly semistable characterisation}.2). For polystability, arguing as before, $G_{C_4}\subset G$ is infinite (see \cite[Remark 8.1 (5)]{dolgachev_2003}). Hence, the dimension of the stabiliser is maximal, and the orbit is closed, i.e. $C_4$ is polystable, regardless of coefficients. 

In summary, we obtain the following:

\begin{proposition}\label{polystable curves}
The orbit closure of every strictly semistable curve with a multiplicity $4$ singularity contains an $X$-curve described by Equation \eqref{eq: polystable 1}, i.e. either a union of four conics at two $\mathbf{X}_9$ singularities, or a triple conic and a $(1,1)$-curve, or a double conic and two $(1,1)$-curves or two double conics. In particular, these curves are GIT polystable.

The orbit closure of every strictly semistable curve with a multiplicity $3$ singularity contains a maximally degenerate $\mathbf{J}_{2,0}$-curve described by Equation \eqref{eq: polystable 2}. 

The orbit closure of every strictly semistable curve with a double ruling contains a maximally degenerate 4-curve described by Equation \eqref{eq: polystable 0}.
\end{proposition}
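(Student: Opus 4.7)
The plan is to handle all three parts of the proposition uniformly via the standard one-parameter subgroup degeneration, using the destabilising subgroups $\lambda_0$, $\lambda_1$, $\lambda_2$ identified in Section \ref{sec: algorithm output}. Given a strictly semistable curve $C$, Theorem \ref{thm: main for divisors} together with the classification in Proposition \ref{strictly semistable characterisation} lets me choose a representative $g \cdot C$ whose support lies in $N^{\oplus}(\lambda_i)$, where the index is dictated by the singularity profile: $i=0$ in the double-ruling case, $i=2$ for the multiplicity-$3$ case, and $i=1$ for the multiplicity-$4$ case.

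I would then form the one-parameter limit $C_0 := \lim_{t \to 0} \lambda_i(t) \cdot (g \cdot C)$. Since every monomial appearing in $g \cdot C$ has non-negative $\lambda_i$-weight by construction, those of strictly positive weight vanish in the limit while the zero-weight monomials survive. Hence $\operatorname{Supp}(C_0) \subset \operatorname{Ann}(\lambda_i)$, so $C_0 \in \overline{G \cdot C}$ and its defining polynomial has the form of Equation \eqref{eq: polystable 0}, \eqref{eq: polystable 1}, or \eqref{eq: polystable 2}, placing it respectively in the family of maximally degenerate $4$-curves, $X$-curves, or maximally degenerate $\mathbf{J}_{2,0}$-curves.

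Polystability in each case is then immediate: the image of $\lambda_i$ lies in $\Stab(C_0)$, which is therefore infinite, and by \cite[Remark 8.1(5)]{dolgachev_2003} the orbit $G \cdot C_0$ is closed within the semistable locus. I do not anticipate any genuine obstacle in the proof. The only substantive step is the geometric identification of the equation in $\operatorname{Ann}(\lambda_i)$ with the claimed configuration, namely the various conic decompositions of an $X$-curve, the configuration producing two $\mathbf{J}_{2,0}$ singularities on the maximally degenerate $\mathbf{J}_{2,0}$-curve, and the pair of double rulings meeting a smooth $(0,4)$-curve. This is a direct normal-form computation that essentially repeats the analyses of $C_3$, $C_4$, and $C_5$ already given in the paragraphs following Equations \eqref{eq: polystable 0}--\eqref{eq: polystable 2}.
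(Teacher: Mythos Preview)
Your approach is essentially the same as the paper's: the paper likewise obtains the degeneration to the annihilator curve via the one-parameter subgroup $\lambda_i$ (stated more tersely than in your proposal), carries out the geometric identification of $C_3,C_4,C_5$ in the paragraphs you reference, and argues polystability from the infinite stabiliser by citing \cite[Remark 8.1(5)]{dolgachev_2003}. Note that both you and the paper invoke that reference in the direction converse to what it literally asserts (infinite stabiliser does not by itself force the orbit to be closed), so the paper is no more rigorous than your proposal on this step.
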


    

Having classified the unstable, semistable and polystable orbits, we are left to classify the stable orbits. To do so, we must first prove the following two lemmas.

\begin{lemma}\label{red with J_20}
    A reducible $(4,4)$-curve $C = l_1\cup C'$, where $l_1$ is a ruling, has a unique $\mathbf{J}_{2,0}$ singularity at the intersection $l_1\cap C'$ if and only if $C$ is defined by the monomials of $N^{\oplus}(\lambda_2)$.
\end{lemma}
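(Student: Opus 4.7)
My plan is to use the $G$-action to fix coordinates and then deduce both directions from the coefficients of the defining polynomial $F$ of $C$ and from its local expansion at the candidate singular point. Using the $G$-action, I may assume $l_1 = \{X = 0\}$ and, when a distinguished point of $l_1 \cap C'$ is prescribed, that $P = ([0:1], [0:1])$. Writing $F = \sum_{a,b} c_{a,b}\, X^a Y^{4-a} Z^b W^{4-b}$, I introduce affine coordinates $x = X/Y$, $z = Z/W$ around $P$ so that the local equation becomes $f(x,z) = \sum_{a,b} c_{a,b}\, x^a z^b$.

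Forward direction. Assume $C = l_1 \cup C'$ has a unique $\mathbf{J}_{2,0}$ singularity at $P$. Reducibility forces $X \mid F$, giving $c_{0,b} = 0$ for all $b$. Uniqueness of the singularity means $l_1 \cap C' = \{P\}$ set-theoretically, and since $C' \cdot l_1 = 4$, the restriction $(F/X)|_{X=0}$ is proportional to $Y^3 Z^4$, whence $c_{1,b}=0$ for $b \le 3$ and $c_{1,4} \ne 0$. The $\mathbf{J}_{2,0}$ germ has multiplicity $3$ at $P$ with triple-line tangent cone $x^3$: multiplicity $3$ makes all $c_{a,b}$ with $a+b \leq 2$ vanish, adding the single new constraint $c_{2,0}=0$; the tangent-cone condition makes all degree-$3$ coefficients vanish except $c_{3,0}$, adding $c_{2,1}=0$ and requiring $c_{3,0} \ne 0$. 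The resulting list of admissible nonzero coefficients is exactly the set of monomial indices of $N^{\oplus}(\lambda_2)$ in Table~\ref{tab:unstable-nonstable}.

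Backward direction. Assume $\mathrm{Supp}(C) \subseteq N^{\oplus}(\lambda_2)$. The vanishing of $c_{0,b}$ forces $X \mid F$, so $C = l_1 \cup C'$; among the coefficients $c_{1,b}$ only $c_{1,4}$ can be nonzero, so $l_1 \cap C' = \{P\}$. At $P$, the vertices of the Newton diagram of $f$ are $(3,0)$, $(2,2)$, $(1,4)$, giving multiplicity $3$ with tangent cone $c_{3,0}\,x^3$. To identify the germ as $\mathbf{J}_{2,0}$, I would perform the analytic change of variables $x \mapsto x + \gamma z^2$, choosing $\gamma$ to be a (generically nonzero) root of the quadratic $3 c_{3,0}\gamma^2 + 2 c_{2,2}\gamma + c_{1,4} = 0$. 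This kills the $xz^4$ monomial, and a short computation shows the new $z^6$ coefficient equals $\frac{\gamma(c_{2,2}\gamma + 2 c_{1,4})}{3}$, which is nonzero for generic $c_{a,b}$. After this change, the Newton polygon has vertices $(3,0), (2,2), (0,6)$ and principal part proportional to $x^3 + b x^2 z^2 + z^6$, which is Arnold's normal form for $\mathbf{J}_{2,0}$ \cite[p.~95]{Arnold1976}. Uniqueness of $P$ as a singular point then follows from a Jacobian check on $C'$ away from $P$.

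The main obstacle is the Newton-polygon step in the backward direction: one must choose the right root $\gamma$, check that the resulting $z^6$ coefficient does not vanish, and establish Newton non-degeneracy on the new polygon so that the germ is exactly $\mathbf{J}_{2,0}$ rather than a deeper degeneration. These are finite verifications, implicitly carrying the genericity hypothesis that also underlies Proposition~\ref{strictly semistable characterisation} and that the authors defer to MAGMA.
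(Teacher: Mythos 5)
Your proof is correct and lands on exactly the coefficient conditions the paper uses, but it gets there by a more explicit route in both directions. For the implication ``unique $\mathbf{J}_{2,0}$ at $l_1\cap C'$ $\Rightarrow$ support contained in $N^{\oplus}(\lambda_2)$'', the paper factors through the claim that such a point forces $C'$ to have an $\mathbf{A}_3$ singularity at $P$ tangent to $l_1$, and then asserts that this pins down the monomials; you instead read the conditions directly off three invariants of the germ (multiplicity $3$, triple-line tangent cone necessarily supported on the tangent of $l_1$, and local intersection multiplicity $4$ of $l_1$ with $C'$), which yields precisely the index set of $N^{\oplus}(\lambda_2)$ and is more self-contained. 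For the converse the paper simply asserts the singularity type, deferring the normal form to MAGMA as in Proposition \ref{strictly semistable characterisation}, whereas you identify the germ by hand: your shear $x\mapsto x+\gamma z^2$ and the resulting $z^6$-coefficient $\gamma(c_{2,2}\gamma+2c_{1,4})/3$ check out, and the whole step can be packaged as saying that the weighted-homogeneous principal part $x(c_{3,0}x^2+c_{2,2}xz^2+c_{1,4}z^4)$ for weights $(2,1)$ is non-degenerate exactly when $c_{1,4}\neq 0$ and $c_{2,2}^2\neq 4c_{3,0}c_{1,4}$, which are your two genericity conditions. Both arguments carry the same implicit genericity convention (the converse fails for special coefficients, which degenerate towards $\operatorname{Ann}(\lambda_2)$ or acquire extra singular points), and you are more honest than the paper in flagging it. Two trivial quibbles: $(2,2)$ is an interior lattice point of the Newton edge from $(3,0)$ to $(1,4)$, not a vertex; and your deduction that $l_1\cap C'=\{P\}$ set-theoretically relies on reading ``unique singularity'' as ruling out further singular points of $C$ on $l_1\cap C'$, which is the reading the paper itself adopts.
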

\begin{proof}
    Firstly, notice that if $C$ is defined by the monomials of $N^{\oplus}(\lambda_2)$, then we have $C = l_1\cup C'$, where $l_1 = \{X = 0\}$, and $P = ([0:1], [0:1]) = C'\cap l_1$ is a $\mathbf{J}_{2,0}$ singularity.

    Conversely, we may assume without loss of generality that $l_1 = \{X = 0\}$, and $P = ([0:1], [0:1]) = C'\cap l_1$ is the $\mathbf{J}_{2,0}$ singularity. Then, since $P$ is a $\mathbf{J}_{2,0}$ singularity tangent at $l_1$, $C'$ must have an $\mathbf{A}_3$ singularity at $P$. This is only true, up to $G$-action, if $C$ is defined by the monomials of $N^{\oplus}(\lambda_2)$.
\end{proof}

\begin{lemma}\label{irr with X9}
    An irreducible $(4,4)$-curve $C$ has a unique $\mathbf{X}_9$ singularity if and only if $C$ is defined by the monomials of $N^{\oplus}(\lambda_1)$.
\end{lemma}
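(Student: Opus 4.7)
The proof will closely mirror Lemma \ref{red with J_20} and split into two implications. For the reverse direction, I would simply invoke what was already established in the proof of Proposition \ref{strictly semistable characterisation}: a curve whose defining polynomial uses only monomials in $N^{\oplus}(\lambda_1)$ is irreducible and has a unique singular point at $P = ([0:1],[0:1])$ of multiplicity $4$, whose normal form is $\mathbf{X}_9$ (verified via MAGMA).

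For the forward direction, given an irreducible $(4,4)$-curve $C$ with a unique $\mathbf{X}_9$ singularity, I would first use the $G$-action to move the singular point to $P = ([0:1],[0:1])$. Then I would dehomogenize by setting $Y = W = 1$, so that $(X,Z)$ are affine coordinates around $P$. Since an $\mathbf{X}_9$ singularity has multiplicity $4$ (from Arnold's classification, as it is essentially four distinct tangent lines meeting at a point), the defining polynomial of $C$ must vanish to order at least $4$ at the origin in these coordinates.

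The key computation is to translate this multiplicity condition into the Hilbert--Mumford weight condition. A bihomogeneous monomial $X^{k_0}Y^{k_1}Z^{l_0}W^{l_1}$ with $k_0 + k_1 = l_0 + l_1 = 4$ restricts under this dehomogenization to $X^{k_0}Z^{l_0}$, which has affine degree $k_0 + l_0$. The multiplicity condition $k_0 + l_0 \ge 4$ is then exactly equivalent to
$$
\langle I, \lambda_1 \rangle = k_0 - k_1 + l_0 - l_1 = 2(k_0 + l_0) - 8 \ge 0,
$$
which is precisely the defining inequality for $N^{\oplus}(\lambda_1)$. Hence every monomial appearing in $\operatorname{Supp}(C)$ lies in $N^{\oplus}(\lambda_1)$, as required.

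The main subtlety, and the step I would take most care with, is confirming that the multiplicity-$4$ condition alone, together with irreducibility and uniqueness of the singular point, really does force the monomial support into $N^{\oplus}(\lambda_1)$ and not into a strictly smaller (hence more degenerate) set such as those corresponding to $\mathbf{J}_{2,0}$ or to $X$-curves. Since we assume the singularity type is exactly $\mathbf{X}_9$ (and not one of its degenerations listed in Figure \ref{fig:dynkin}), the generic coefficients permitted by $N^{\oplus}(\lambda_1)$ are realised, ensuring the support is not further constrained. Irreducibility rules out the reducible configurations associated with the other maximal semi-destabilised sets $N^{\oplus}(\lambda_0)$ and $N^{\oplus}(\lambda_2)$, completing the argument.
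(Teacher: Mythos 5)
Your proof is correct and follows essentially the same route as the paper's: the reverse direction is direct verification at $P=([0:1],[0:1])$, and the forward direction moves the $\mathbf{X}_9$ point to $P$ and uses the multiplicity-$4$ condition to constrain the support. Your explicit identity $\langle I,\lambda_1\rangle = 2(k_0+l_0)-8\ge 0 \iff k_0+l_0\ge 4$ is just a cleaner packaging of the paper's case-by-case exclusion of the low-order monomials, so the two arguments coincide in substance.
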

\begin{proof}
    For the first direction, notice that if $C$ is defined by the monomials of $N^{\oplus}(\lambda_1)$, then $P = ([0:1], [0:1]) = C'\cap l_1$ is a $\mathbf{X}_9$ singularity.

    For the converse, we may assume without loss of generality, that the $\mathbf{X}_{9}$ singularity is $P = ([0:1], [0:1])$. As $P\in C$, we have $C = \{f =0\}$, where $f$ does not contain the monomial $Z^4W^4$. Since $\operatorname{mult}(P) = 4$, $f$ cannot contain monomials $X^iY^{4-i}Z^jW^{4-j}$ such that $i=3$ and $j=0$, $i=2$ and $j<2$ or $i=1$ and  $j<3$, or $j=3$ and $i=0$, $j=2$ and $i<2$ or $j=1$ and  $i<3$. The remaining monomials are precisely the monomials in the set $N^{\oplus}(\lambda_1)$, as required.
\end{proof}

We are now in a position to classify all stable $(4,4)$-curves.

\begin{theorem}\label{stable 4,4 curves}
    Let $C$ be a $(4,4)$-curve in $\mathbb{P}^1\times\mathbb{P}^1$. Then $C$ is stable if and only if $C$ does not contain a double ruling and one of the following holds:
    \begin{enumerate}
        \item $C$ is an irreducible, possibly singular curve with singularities better than $\mathbf{X}_9$;
        \item $C$ is reducible, containing a $(1,0)$-ruling, and the residual $(3,4)$-curve $C'$ intersects the ruling in a unique point with multiplicity $\leq 3$ that is also a singular point of $C$, with singularities better than $\mathbf{J}_{2,0}$.
    \end{enumerate}
\end{theorem}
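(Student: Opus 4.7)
The strategy is to establish the theorem by process of elimination, combining the classifications of unstable curves (Proposition \ref{unstable characterisation}) and strictly semistable curves (Proposition \ref{strictly semistable characterisation}). A curve $C$ is stable if and only if it is neither unstable nor strictly semistable, and by Theorem \ref{thm: main for divisors} this is equivalent to the condition that no $G$-translate of $C$ has its support contained in any of the maximal (semi-)destabilising sets $N^{\oplus}(\lambda_0), N^{\oplus}(\lambda_1), N^{\oplus}(\lambda_2), N^+(\lambda_3), N^+(\lambda_4)$ identified in Section \ref{sec: algorithm output}.

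First I would show that any curve $C$ containing a double ruling cannot be stable. After a $G$-action, we may assume the double ruling is $\{X^2 = 0\}$, so the defining polynomial of $C$ is divisible by $X^2$ and every monomial in its support is of the form $X^i Y^{4-i} Z^j W^{4-j}$ with $i \geq 2$. A direct weight computation with respect to $\lambda_0 = (1,-1,0,0)$ confirms that all such monomials lie in $N^{\oplus}(\lambda_0)$, so $C$ is not stable. This establishes the necessity of the \emph{no double ruling} clause.

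Assuming $C$ contains no double ruling, I would split the analysis into the irreducible and reducible cases. If $C$ is irreducible, then after any $G$-action its support cannot lie in $N^{\oplus}(\lambda_2), N^+(\lambda_3),$ or $N^+(\lambda_4)$, since each of these sets corresponds to curves containing a ruling as an irreducible component. Hence the only obstruction to stability is containment in a $G$-translate of $N^{\oplus}(\lambda_1)$, which by Lemma \ref{irr with X9} occurs precisely when $C$ has an $\mathbf{X}_9$ singularity; worse singularities are further specialisations still lying in $N^{\oplus}(\lambda_1)$. This yields condition (1). In the reducible case, $C$ must contain a ruling (up to the symmetry exchanging the two $\mathbb{P}^1$ factors), so we may write $C = l \cup C'$ with $l$ a $(1,0)$-ruling. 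By Lemma \ref{red with J_20}, having a $\mathbf{J}_{2,0}$ singularity at the intersection $l \cap C'$ is equivalent to lying in $N^{\oplus}(\lambda_2)$, while Proposition \ref{unstable characterisation}.2 identifies the case of a unique intersection point of multiplicity $4$ (producing the $\mathbf{Z}_{12}$ singularity on $C$) with containment in $N^+(\lambda_3)$. Excluding both scenarios yields the conditions in (2): the multiplicity at the intersection must be at most $3$, and the resulting singularity must be better than $\mathbf{J}_{2,0}$.

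The main obstacle is the careful verification that these support-theoretic containments translate precisely into the stated geometric conditions on multiplicity and singularity type at the intersection point. This requires choosing local coordinates centred at the suspected singularity, tracking which monomials of the defining polynomial must vanish to force containment in each $N^{\oplus}(\lambda_i)$ or $N^+(\lambda_j)$, and invoking the normal-form computations for $\mathbf{X}_9$, $\mathbf{J}_{2,0}$, and $\mathbf{Z}_{12}$ (as done via \texttt{MAGMA} in the proofs of Lemmas \ref{irr with X9} and \ref{red with J_20} and Proposition \ref{unstable characterisation}). Once this dictionary is in place, the characterisation in the theorem follows directly by taking the negation of the not-stable conditions.
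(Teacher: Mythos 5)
Your proposal is correct and follows essentially the same route as the paper: stability is established by elimination, combining the classification of unstable curves (Proposition \ref{unstable characterisation}), strictly semistable curves (Proposition \ref{strictly semistable characterisation}), and the characterisations in Lemmas \ref{red with J_20} and \ref{irr with X9}. If anything, your treatment of the double-ruling case and the explicit irreducible/reducible split is slightly more detailed than the paper's own argument.
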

\begin{proof}
     Let $C$ be a stable $(4,4)$-curve in $\mathbb{P}^1\times\mathbb{P}^1$. From Propositions \ref{unstable characterisation} and \ref{strictly semistable characterisation} and Lemma \ref{irr with X9}, we see that if $C$ is irreducible, it cannot have an $\mathbf{X}_9$ or worse singularity. Similarly, from Propositions \ref{unstable characterisation} and \ref{strictly semistable characterisation} and Lemma \ref{red with J_20}, we see that if $C$ is reducible, then it cannot have more than one ruling, or a double ruling. If it has one ruling, then $C$ cannot be such that the residual $(3,4)$-curve $C'$ intersects the ruling in a unique point with multiplicity $3$ that is also a singular point of $C$, with $\mathbf{J}_{2,0}$ singularity or in a unique point with multiplicity $4$ that is also a singular point of $C$, with $\mathbf{Z}_{12}$ singularity.
\end{proof}

\subsection{A Connection to K-Moduli Wall-Crossing}\label{k-moduli}

We will conclude this section by tying the GIT classification we have obtained to known results regarding K-moduli wall-crossing. Let $X = \pr^1 \times \pr^1$ and consider the log Fano pair $\big(X, cC_{4,4}\big)$, where $C_{4,4}\in |-2K_X|$. Let $\mathcal{M}^K(c)$ be the K-moduli stack parametrising K-semistable log Fano pairs $\big(X, cC_{4,4}\big)$, with good moduli space $M^K(c)$, parametrising K-polystable log Fano pairs $\big(X, cC_{4,4}\big)$. Let also $\mathcal{M}^{GIT}$ be the GIT moduli stack parametrising GIT semistable $(4,4)$-curves in $X$, with good moduli space $M^{GIT}$ the GIT quotient $\pr|\mathcal{O}_{\pr^1 \times \pr^1}(4,4)|^*\sslash \big(\operatorname{SL}(2)\times \operatorname{SL}(2) \big)$ detailed in Section \ref{sec: algorithm output}.

By \cite[Theorem 1.1]{ascher2020kmoduli}, we know that for $0<c<1/8$ there exists an isomorphism of stacks $\mathcal{M}^K(c) \cong \mathcal{M}^{GIT}$, which descends to an isomorphism of good moduli spaces ${M}^K(c) \cong {M}^{GIT}$. Propositions \ref{strictly semistable characterisation} and  \ref{polystable curves} and Theorem \ref{stable 4,4 curves} allow us to obtain an explicit description of $\mathcal{M}^K(c)$ and ${M}^K(c)$.

\begin{theorem}\label{thm: wall-crossing}
    Let $0<c<1/8$. A log pair $\big(\mathbb{P}^1\times \mathbb{P}^1, cC_{4,4}\big)$ is 
    \begin{itemize}
        \item strictly K-semistable if and only if
        \begin{enumerate}
        \item $C_{4,4}$ is irreducible with one $\mathbf{X}_9$ singularity (multiplicity $4$);
        \item $C_{4,4}$ is reducible, containing one ruling, with one $\mathbf{J}_{2,0}$ singularity (multiplicity $3$) at the intersection of the ruling and the residual curve;
        \item $C_{4,4}$ is reducible and non-reduced, containing a double ruling. The resulting $(2,4)$-curve $C'$ is smooth, intersecting the double ruling at 4 points given by solutions of the generic degree 4 form in $\pr^1$,
    \end{enumerate}
    \item strictly K-polystable if and only if
    \begin{enumerate}
        \item $C_{4,4}$ is a reducible maximally degenerate $4$-curve described by Equation \eqref{eq: polystable 0};
        \item $C_{4,4}$ is an irreducible $X$-curve described by Equation \eqref{eq: polystable 1};
        \item $C_{4,4}$ is a reducible maximally degenerate $\mathbf{J}_{2,0}$-curve described by Equation \eqref{eq: polystable 2},
    \end{enumerate}
    \item strictly K-stable if and only if
    \begin{enumerate}
        \item $C_{4,4}$ is an irreducible, possibly singular curve with singularities better than $\mathbf{X}_9$;
        \item $C_{4,4}$ is reducible, containing a $(1,0)$-ruling , with singularities better than $\mathbf{J}_{2,0}$.
    \end{enumerate}
    \end{itemize}
\end{theorem}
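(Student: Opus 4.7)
The plan is to leverage the isomorphism of moduli stacks $\mathcal{M}^K(c) \cong \mathcal{M}^{GIT}$ established by Ascher--DeVleming--Liu in \cite[Theorem 1.1]{ascher2020kmoduli}, valid for $0 < c < 1/8$, and then transport the GIT classification produced earlier in this section (Propositions \ref{unstable characterisation}, \ref{strictly semistable characterisation}, \ref{polystable curves} and Theorem \ref{stable 4,4 curves}) across this isomorphism. Since the isomorphism is at the level of stacks, it preserves not only the set of closed points but also the closure relations between orbits and the stabiliser groups, hence sends K-semistable, K-polystable and K-stable pairs to their GIT counterparts and vice versa.

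First, I would note that, by the stack isomorphism, a log pair $\big(\mathbb{P}^1 \times \mathbb{P}^1, c C_{4,4}\big)$ with $0 < c < 1/8$ is K-semistable (respectively K-polystable, K-stable) if and only if the underlying curve $C_{4,4}$ is GIT semistable (respectively GIT polystable, GIT stable) for the action of $G = \operatorname{SL}(2)\times \operatorname{SL}(2)$ on $V_{4,4}$. Combined with the fact that K-stable implies K-polystable implies K-semistable, this reduces the task to translating the three GIT classifications.

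Second, for the strictly K-semistable case, a pair is strictly K-semistable precisely when the corresponding curve is strictly GIT semistable, and Proposition \ref{strictly semistable characterisation} lists these curves in three families, namely the irreducible curves with one $\mathbf{X}_9$ singularity, the reducible curves with a ruling meeting the residual $(3,4)$-curve in a single $\mathbf{J}_{2,0}$ singularity, and the non-reduced curves containing a double ruling whose residual $(2,4)$-curve is smooth and meets the double ruling in four distinct points. For the strictly K-polystable case, we use that a strictly semistable orbit is polystable if and only if it is closed, and Proposition \ref{polystable curves} shows that each strictly semistable orbit degenerates to one of the three representative equations \eqref{eq: polystable 0}, \eqref{eq: polystable 1} and \eqref{eq: polystable 2}, all of whose stabilisers are infinite; these are therefore the only strictly K-polystable curves. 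Finally, for the K-stable case, Theorem \ref{stable 4,4 curves} characterises GIT stable curves as those not containing a double ruling, such that either $C_{4,4}$ is irreducible with singularities strictly better than $\mathbf{X}_9$, or $C_{4,4}$ has exactly one $(1,0)$-ruling meeting the residual $(3,4)$-curve in a singular point strictly better than $\mathbf{J}_{2,0}$.

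There is no substantive obstacle in this proof beyond the bookkeeping of translating each item; the real content has already been absorbed into \cite[Theorem 1.1]{ascher2020kmoduli} and into the GIT analysis of Section \ref{sec: stability analysis}. The only minor subtlety worth mentioning explicitly is ensuring that the K-stable list in the theorem statement tacitly excludes double rulings (which are already excluded by Propositions \ref{unstable characterisation} and \ref{strictly semistable characterisation}, since any curve containing a double ruling is either unstable or strictly semistable), and that the residual $(3,4)$-curve in the reducible K-stable case has intersection multiplicity at most $3$ with the ruling, so as to avoid the $\mathbf{Z}_{12}$ degeneration appearing in Proposition \ref{unstable characterisation}. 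Assembling these observations yields the three itemised characterisations in the statement.
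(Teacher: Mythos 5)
Your proposal is correct and follows exactly the paper's argument: the paper's proof is a one-line deduction from Propositions \ref{strictly semistable characterisation} and \ref{polystable curves}, Theorem \ref{stable 4,4 curves}, and the stack isomorphism of \cite[Theorem 1.1]{ascher2020kmoduli}, which is precisely the transport-of-structure you describe. Your additional remarks on excluding double rulings and the multiplicity $\leq 3$ condition are sound bookkeeping that the paper leaves implicit.
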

\begin{proof}
    This is an immediate consequence of Propositions \ref{strictly semistable characterisation} and  \ref{polystable curves}, Theorem \ref{stable 4,4 curves} and \cite[Theorem 1.1]{ascher2020kmoduli}.
\end{proof}

\begin{remark}
    Combining the results of \cite[\S 4]{Shah_1981}, \cite[Lemma 3.2, Proposition 3.3]{Laza_OGrady_2018} and \cite[Theorem 1.1]{ascher2020kmoduli} one can recover the above classification of K-(semi/poly)stable points of Theorem \ref{thm: wall-crossing}, Theorem \ref{thm: wall-crossing} provides an explicit description of the K-(semi/poly)stable log Fano pairs in terms of explicit forms of singularities relying on the results of Section \ref{sec: stability analysis}, which is not present in the aforementioned works.
\end{remark}

\section{GIT of (1,2)-Divisors in \texorpdfstring{$\pr^1\times \pr^3$}{}}\label{1,2 divisors}

We will apply the GIT algorithm detailed above (Section \ref{sec: computer implementation}) to classify the GIT quotient of $(1,2)$-divisors in $\pr^1_{\mathbf{x}}\times \pr^3_{\mathbf{y}}$. Smooth $(1,2)$-divisors are general elements of Fano threefold family 2.25, and are known to be K-stable (c.f \cite[Corollary 4.32]{araujo_2023}). Let $G \coloneqq \operatorname{SL}(2) \times \operatorname{SL}(4)$, and $V \coloneqq |\mathcal{O}_{\pr^1 \times \pr^3}(1,2)|$. We will study the GIT quotient $\mathbb{P} V^* \sslash G$ computationally. 

\subsection{Algorithm Output}\label{sec: algorithm output 1,2}

Recall that we denote a normalised one-parameter subgroup $\lambda$ in $G$ by $\lambda = (u,-u,s_0,s_1, s_2,s_3)$. By our algorithm, we obtain $S_{1,1}^{1,2}$ as in Definition \ref{defn: new fundamental set} and Theorem \ref{thm: fundamental set}. This set has 429 elements, which we won't list here. The relevant one-parameter subgroups that give maximal semi-destabilised sets are 
\begin{align*}
\lambda_0 &= (0, 0, 1, 1, -1, -1),\\
\lambda_1 &= (2, -2, 1, 1, -1, -1),\\
\lambda_2 &= (2, -2, 1, 1, 1, -3),\\
\lambda_3 &= (0, 0, 1, 0, 0, -1),\\
\lambda_4 &= (2, -2, 3, -1, -1, -1).
\end{align*}

These give five separate maximal semi-destabilising sets $N^{\oplus}(\lambda_i)$, which we will analyse in order to determine the stable, polystable and semistable orbits. These are given by the sets of monomials detailed in Table \ref{tab:unonstable 1,2}.

\begingroup
\renewcommand{\arraystretch}{1.05}
\begin{table}[h]
   \centering
   \begin{NiceTabular}{|c|l|l|l|l|l|}
     \hline
      Families & $N^{\oplus}(\lambda_0)$ & $N^{\oplus}(\lambda_1)$  & $N^{\oplus}(\lambda_2)$& $N^{\oplus}(\lambda_3)$ & $N^{\oplus}(\lambda_4)$  \\ \hline
      \Block{18-1}{\rotate monomials}
       & $x_0y_0^2$ & $x_0y_0^2$ & $x_0y_0^2$ & $x_0y_0^2$ & $x_0y_0^2$ \\
       & $x_0y_0y_1$ & $x_0y_0y_1$ & $x_0y_0y_1$ & $x_0y_0y_1$ & $x_0y_0y_1$ \\
       & $x_0y_0y_2$ & $x_0y_0y_2$ & $x_0y_0y_2$ & $x_0y_0y_2$ & $x_0y_0y_2$ \\
       & $x_0y_0y_3$ & $x_0y_0y_3$ & $x_0y_0y_3$ & $x_0y_0y_3$ & $x_0y_0y_3$ \\
       & $x_0y_1^2$ & $x_0y_1^2$ & $x_0y_1^2$ & $x_0y_1^2$ & $x_0y_1^2$ \\
       & $x_0y_1y_2$ & $x_0y_1y_2$ & $x_0y_1y_2$ & $x_0y_1y_2$ & $x_0y_1y_2$ \\
       & $x_0y_1y_3$ & $x_0y_1y_3$ & $x_0y_1y_3$ & --- & $x_0y_1y_3$ \\
       & --- & $x_0y_2^2$ & $x_0y_2^2$ & $x_0y_2^2$ & $x_0y_2^2$ \\
       & --- & $x_0y_2y_3$ & $x_0y_2y_3$ & --- & $x_0y_2y_3$ \\
       & --- & $x_0y_3^2$ & --- & --- & $x_0y_3^2$ \\
       & $x_1y_0^2$ & $x_1y_0^2$ & $x_1y_0^2$ & $x_1y_0^2$ & $x_1y_0^2$ \\
       & $x_1y_0y_1$ & $x_1y_0y_1$ & $x_1y_0y_1$ & $x_1y_0y_1$ & $x_1y_0y_1$ \\
       & $x_1y_0y_2$ & --- & $x_1y_0y_2$ & $x_1y_0y_2$ & $x_1y_0y_2$ \\
       & $x_1y_0y_3$ & --- & --- & $x_1y_0y_3$ & $x_1y_0y_3$ \\
       & $x_1y_1^2$ & $x_1y_1^2$ & $x_1y_1^2$ & $x_1y_1^2$ & --- \\
       & $x_1y_1y_2$ & --- & $x_1y_1y_2$ & $x_1y_1y_2$ & --- \\
       & $x_1y_1y_3$ & --- & --- & --- & --- \\
       & --- & --- & $x_1y_2^2$ & $x_1y_2^2$ & --- \\
       \hline
   \end{NiceTabular}
   \caption{Not-stable families and their monomials.}
   \label{tab:unonstable 1,2}
\end{table} 
\endgroup
  
By Theorem \ref{thm:centroid}, all families $N^{\oplus}(\lambda_i)$ represent strictly semistable divisors. Their potential closed orbits, i.e. the monomials of zero weight given by $\operatorname{Ann}(\lambda_i)$, are listed in Table \ref{tab:closed orbits 1,2}.

\begingroup
\renewcommand{\arraystretch}{1.1}
\begin{table}[h]
   \centering
   \begin{NiceTabular}{|c|l|l|l|l|l|}
     \hline
      Families & $\operatorname{Ann}(\lambda_0)$ & $\operatorname{Ann}(\lambda_1)$& $\operatorname{Ann}(\lambda_2)$& $\operatorname{Ann}(\lambda_3)$& $\operatorname{Ann}(\lambda_4)$ \\ 
      \hline
      \Block{9-1}{\rotate monomials}
      & $x_0y_1y_3$&$x_0y_3^2$ & $x_0y_2y_3$ & $x_0y_2^2$& $x_0y_3^2$ \\
      & $x_0y_1y_2$& $x_0y_2y_3$& $x_0y_1y_3$& $x_0y_1y_2$ & $x_0y_2y_3$ \\
      & $x_0y_0y_3$& $x_0y_2^2$& $x_0y_0y_3$& $x_0y_1^2$  & $x_0y_2^2$\\
      & $x_0y_0y_2$& $x_1y_1^2$& $x_1y_2^2$& $x_0y_0y_3$ & $x_0y_1y_3$\\
      & $x_1y_1y_3$&$x_1y_0y_1$ & $x_1y_1y_2$& $x_1y_2^2$ & $x_0y_1y_2$\\
      & $x_1y_1y_2$& $x_1y_0^2$& $x_1y_1^2$& $x_1y_1y_2$& $x_0y_1^2$\\
      & $x_1y_0y_3$&          & $x_1y_0y_2$& $x_1y_1^2$ & $x_1y_0y_3$\\
      & $x_1y_0y_2$&          & $x_1y_0y_1$& $x_1y_0y_3$ & $x_1y_0y_2$\\
      &            &          & $x_1y_0^2$ &             & $x_1y_0y_1$\\
      \hline
    \end{NiceTabular}
    \caption{Potential closed orbits.}
    \label{tab:closed orbits 1,2}
\end{table}
\endgroup

Notice that, for a threefold $X = \{f = 0\}$ such that $\operatorname{Supp}(X) = N^{\oplus}(\lambda_3)$, $f$ is given by 
$$f = x_0\big(f_2(y_0,y_1,y_2) + a_0y_0y_3\big) + x_1\big(g_2(y_0,y_1,y_2) +a_1y_0y_3\big),$$
where the $f_2$ and $g_2$ are degree 2 homogeneous polynomials in $y_0$, $y_1$, $y_2$ and the $a_i$ are non-zero coefficients. In particular, up to a suitable $\Aut(\pr^1\times \pr^3)$-action, $f$ is given by 
 \begin{align}\label{n0l1}
        f &= x_0\big(f_2(y_0,y_1,y_2) + y_0y_3\big) + x_1\big(g_2(y_0,y_1,y_2) - y_0y_3\big).
    \end{align}

\subsection{Stability Analysis}\label{sec: stability analysis 1,2}

We will first prove some general results regarding singular $(1,2)$-divisors. A general $(1,2)$-divisor $X$ in $\pr^1\times \pr^3$ is given by an equation
\begin{equation}\label{eq: generic form of 1,2 divisor}
    f = x_0f_2(y_0,y_1,y_2,y_3) +x_1g_2(y_0,y_1,y_2,y_3),
\end{equation}
where $f_2$ and $g_2$ are degree $2$ homogeneous polynomials in variables $y_0$, $y_1$, $y_2$, $y_3$. Consider the complete intersection $C = f_2\cap g_2$ in $\pr^3$. We will now relate the singular points of $X$ to the singular points of $C$.

\begin{lemma}\label{1,2 divisors and pencils}
    Let $X = \{f=0\}$ be a $(1,2)$-divisor in $\pr^1\times \pr^3$. A point $P = ([1:1], [a:b:c:d])\in X$ is singular if and only if $[a:b:c:d]\in C = f_2\cap g_2$ is a singular point of the complete intersection.
\end{lemma}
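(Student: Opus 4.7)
The plan is a direct Jacobian-criterion calculation in a local affine chart near $P$, matching the singularity equations for $X$ against those for the complete intersection $C$. Without loss of generality assume $a\neq 0$ and rescale so $a=1$; then pass to the chart $\{x_0 = 1,\ y_0 = 1\}$ with affine coordinates $(x,z_1,z_2,z_3) := (x_1/x_0, y_1/y_0, y_2/y_0, y_3/y_0)$. Setting $\tilde f_2(z) := f_2(1,z_1,z_2,z_3)$ and $\tilde g_2(z) := g_2(1,z_1,z_2,z_3)$, the divisor $X$ is locally cut out by
\[
\tilde f(x,z) = \tilde f_2(z) + x\,\tilde g_2(z),
\]
and $P$ corresponds to the point $(1,b,c,d)$.

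Next I would apply the Jacobian criterion at $P$. The vanishing of $\partial_x\tilde f$ gives $\tilde g_2(Q)=0$, which combined with $\tilde f(P) = \tilde f_2(Q) + \tilde g_2(Q)=0$ yields $\tilde f_2(Q) = \tilde g_2(Q) = 0$; that is, $Q\in C$. The three $z_i$-partials translate into $\partial_{z_i}\tilde f_2(Q) + \partial_{z_i}\tilde g_2(Q) = 0$, equivalently $\nabla_z \tilde f_2(Q) = -\nabla_z \tilde g_2(Q)$. By the Jacobian criterion for a codimension-two complete intersection in $\pr^3$, this linear dependence between the two gradients is exactly the condition for $Q$ to be a singular point of $C$. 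Euler's identity applied to the degree-two forms $f_2, g_2$ recovers the $y_0$-direction of the gradient from the other three, so no information is lost in passing to the affine chart.

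For the converse I would run this chain in reverse: $Q$ singular on $C$ gives $\tilde f_2(Q) = \tilde g_2(Q) = 0$ together with a linear dependence of the two gradients at $Q$, producing some $[\alpha:\beta]\in \pr^1$ with $\alpha\,\nabla_z\tilde f_2(Q) + \beta\,\nabla_z\tilde g_2(Q) = 0$. To upgrade to the specific relation $\nabla_z \tilde f_2(Q) + \nabla_z \tilde g_2(Q)=0$ required for $([1:1],Q)$ to be singular on $X$, I would invoke the $\operatorname{SL}(2)$-action on the first factor, which merely re-selects a basis of the pencil $\{\alpha f_2 + \beta g_2\}$ and leaves its base locus $C$ invariant; this normalises the singular parameter in $\pr^1$ to $[1:1]$. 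The main obstacle is precisely this last step: cleanly justifying that the particular parameter $[1:1]$ appearing in the statement is without loss of generality, via $\operatorname{SL}(2)$-symmetry acting on the pencil, rather than an implicit hypothesis on $(f_2,g_2)$.
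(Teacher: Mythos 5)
Your proposal is correct and takes essentially the same route as the paper: both directions come down to computing the partials of $f = x_0 f_2 + x_1 g_2$ and identifying singularity of $X$ at $([1:1],Q)$ with $f_2(Q)=g_2(Q)=0$ together with $\nabla f_2(Q) = -\nabla g_2(Q)$, hence with a rank drop of the Jacobian of $(f_2,g_2)$; your affine chart plus Euler's identity is just the dehomogenised version of the paper's homogeneous computation. The normalisation worry you flag in the converse is legitimate, but the paper's own proof treats it no more carefully (it writes ``scaling accordingly'' where you invoke the $\operatorname{GL}(2)$-reparametrisation of the pencil, which fixes the base locus $C$), so you are not missing anything the paper supplies.
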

\begin{proof}
    Firstly, let $P = ([1:1], [a:b:c:d])\in X$ be a singular point, where $X$ is given by the equation 
    $$f = x_0f_2(y_0,y_1,y_2,y_3) +x_1g_2(y_0,y_1,y_2,y_3).$$
    Since $P$ is singular, all the partial derivatives $\frac{\partial f}{\partial x_i}$, $\frac{\partial f}{\partial y_i}$ must vanish at $P$. Hence, 
    $$f_2(a:b:c:d) =0=g_2(a:b:c:d),$$
    and $P'\coloneqq[a:b:c:d]\in C\coloneqq f_2\cap g_2$. In addition, we get
    $$\frac{\partial f_2}{\partial y_i}\Bigg\vert_{P'} = -\frac{\partial g_2}{\partial y_i}\Bigg\vert_{P'}.$$
    But then, the matrix
    $$J = \begin{pmatrix}
        \frac{\partial f_2}{\partial x_i}\Big\vert_{P'}\\
        \frac{\partial g_2}{\partial x_i}\Big\vert_{P'}
        \end{pmatrix}$$
at $[a:b:c:d]$ has $\mathrm{rank}(J)=1<2$, hence $[a:b:c:d]\in \operatorname{Sing}(C)$, as required.

Conversely, let $P' \coloneqq [a:b:c:d]\in \operatorname{Sing}(C)\subset C\coloneqq f_2\cap g_2$. This implies that the matrix
    $$J = \begin{pmatrix}
        \frac{\partial f_2 }{\partial x_i}\Big\vert_{P'}\\
        \frac{\partial g_2}{\partial x_i}\Big\vert_{P'}
        \end{pmatrix}$$
at $[a:b:c:d]$ has $\mathrm{rank}(J)<2$, i.e. there exists a constant $c$, such that 
\begin{equation}\label{eq: partials are proportional}
    \frac{\partial f_2}{\partial y_i}\Bigg\vert_{P'} = c\frac{\partial g_2}{\partial y_i}\Bigg\vert_{P'}.
\end{equation}
Let $X$ be given by the equation 
$$f = x_0f_2(y_0,y_1,y_2,y_3) +x_1g_2(y_0,y_1,y_2,y_3).$$
Since $[a:b:c:d]\in C$, we have that $P\coloneqq([1:1], [a:b:c:d])\in X$, and that 
$$\frac{\partial f }{\partial x_0}\Bigg\vert_P= \frac{\partial f }{\partial x_1}\Bigg\vert_P =0.$$
Furthermore, by applying Equation \eqref{eq: partials are proportional} and scaling accordingly, we have 
$$\frac{\partial f }{\partial y_i}\Bigg\vert_P = x_0\frac{\partial f_2 }{\partial y_i}\Bigg\vert_P+ x_1\frac{\partial g_2 }{\partial y_i}\Bigg\vert_P = 0,$$
hence $P\in \operatorname{Sing}(X)$.
\end{proof}

A direct consequence is the following.

\begin{corollary}\label{type of singularity remains same}
    Let $X = \{f=0\}$ be a singular $(1,2)$-divisor in $\pr^1\times \pr^3$ given by Equation \eqref{eq: generic form of 1,2 divisor}. Then the singular point $P = ([1:1], [a:b:c:d])\in X$, is a singular point of type $\mathbf{A}_n$ or $\mathbf{D}_n$ if and only if the singular point $[a:b:c:d]\in f_2\cap g_2$ is of type $\mathbf{A}_n$ or $\mathbf{D}_n$.
\end{corollary}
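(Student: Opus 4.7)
My plan is to work in local analytic coordinates and exhibit the germ of $X$ at $P$ as the Thom--Sebastiani suspension of the germ of $C$ at $P'$ by a non-degenerate quadratic form in two auxiliary variables; the splitting lemma for isolated hypersurface singularities then equates their ADE types.

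First, I pick an affine chart on $\pr^3$ centered at $P'$ with coordinates $u_1, u_2, u_3$, and the affine chart $x_0 = 1$ on $\pr^1$ with $s := x_1 - 1$ centered at $[1:1]$. In this chart the local equation of $X$ reads
$$F(u) + s\, g_2(u) = 0, \qquad F := f_2 + g_2,$$
and the singularity relation $\nabla f_2|_{P'} + \nabla g_2|_{P'} = 0$ extracted from the proof of Lemma \ref{1,2 divisors and pencils} is precisely the vanishing of the linear part of $F$ at the origin, while $g_2$ retains its linear part $\nabla g_2|_{P'}$.

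In the generic case $\nabla g_2|_{P'} \neq 0$, the function $g_2$ is a smooth local coordinate at $P'$. After an analytic change of coordinates I may assume $u_1 = g_2$, keeping $u_2, u_3$ as independent transverse coordinates. Then $C$ near $P'$ is cut out by $u_1 = 0$ and $\phi(u_2, u_3) := F(0, u_2, u_3) = 0$, so it is analytically isomorphic to the plane curve singularity $\{\phi = 0\} \subset \mathbb{C}^2$. Taylor expanding $F(u_1, u_2, u_3) = \phi(u_2, u_3) + u_1 \, h(u_1, u_2, u_3)$ and substituting $t := s + h$ (a valid change of coordinates since $\partial t / \partial s = 1$) puts the local equation of $X$ into the form
$$u_1 t + \phi(u_2, u_3) = 0 \quad \text{in } \mathbb{C}^4.$$
This final expression is manifestly the Thom--Sebastiani sum of $\phi(u_2, u_3)$ with the non-degenerate rank-$2$ quadratic form $u_1 t = \tfrac{1}{4}\big((u_1+t)^2 - (u_1-t)^2\big)$ in the auxiliary variables $(u_1, t)$. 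By the splitting lemma such a suspension preserves the ADE type, so the threefold singularity of $X$ at $P$ is of type $\mathbf{A}_n$ (respectively $\mathbf{D}_n$) if and only if $\{\phi = 0\}$ is a plane curve singularity of the same type at the origin, equivalently if and only if $C$ is of the corresponding type at $P'$.

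The principal obstacle I anticipate is the degenerate stratum where $\nabla g_2|_{P'} = 0$. By the singularity relation this forces $\nabla f_2|_{P'} = 0$ as well, so every element of the pencil $\langle f_2, g_2\rangle$ has a critical point at $P'$ and the coordinate change above breaks. I would handle this case separately by arguing that $C$ at $P'$ and $X$ at $P$ both fail to be simple singularities: the curve $C$ then coincides locally with the intersection of two quadric cones through a common vertex, which produces unimodal (non-ADE) singularities, and the same degeneration is inherited by $X$ through the equation $F+sG=0$. Consequently the biconditional holds vacuously on this stratum, and establishing this vacuity is the main technical hurdle of the proof.
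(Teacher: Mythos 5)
Your argument is correct, and it supplies substantially more than the paper does: the paper simply asserts the corollary as ``a direct consequence'' of Lemma \ref{1,2 divisors and pencils}, whereas that lemma only matches singular points set-theoretically and says nothing about analytic type. Your stabilisation argument is exactly the missing content. The reduction to $u_1 t + \phi(u_2,u_3)=0$ is right: since the ideal $(f_2,g_2)$ equals $(F,G)$ with $F=f_2+g_2$, taking $u_1=G$ as a coordinate identifies the germ of $C$ at $P'$ with the plane curve germ $\{\phi=0\}$, and the substitution $t=s+h$ exhibits the germ of $X$ at $P$ as its double suspension by the nondegenerate form $u_1t$; stable equivalence of ADE normal forms then gives the biconditional with the same letter and index. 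One remark on the stratum you flag as the main hurdle: it is easier than you anticipate. If $\nabla g_2|_{P'}=0$ (hence $\nabla f_2|_{P'}=0$), then in affine coordinates centred at $P'$ both $f_2$ and $g_2$ reduce to their quadratic parts $q_1,q_2$, so the local equation of $X$ is $(q_1+q_2)(u)+s\,q_2(u)$, which vanishes together with all its partials along the entire line $\{u=0\}$; thus $X$ is singular along the whole fibre $\pr^1\times\{P'\}$ and the singularity at $P$ is non-isolated, hence not of type $\mathbf{A}_n$ or $\mathbf{D}_n$. On the curve side, $C$ is then the cone over a length-four subscheme of $\pr^2$ (or worse), so it has multiplicity at least $4$ at $P'$ and is likewise not of type $\mathbf{A}_n$ or $\mathbf{D}_n$, whose multiplicities are $2$ and $3$. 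So the vacuity you need is immediate, and no analysis of unimodal normal forms is required.
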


The above two results, combined with the results of \cite[\S 4.2, Table 2]{pap22} allow us to obtain a full description of singularities of $(1,2)$-divisors in $\pr^1\times \pr^3$ in terms of equations (up to $\Aut(\pr^1\times \pr^3)$-equivalence).

\begin{lemma}\label{lem: 1,2 divisors with A1 sings}
    Let $X = \{f=0\}$ be a singular $(1,2)$-divisor in $\pr^1\times \pr^3$ given by Equation \eqref{eq: generic form of 1,2 divisor}. Then $X$ has
    \begin{enumerate}
        \item one $\mathbf{A}_1$ singularity if and only if (up to $\Aut(\pr^1\times \pr^3)$-action)
        \begin{align}
             f_2&= q_1(y_0,y_1,y_2)+y_0y_3, \label{eq: 1,2 ss 1}\\
             g_2&= q_2(y_0,y_1,y_2)+y_0y_3; \notag 
             \shortintertext{\item two $\mathbf{A}_1$ singularities if and only if (up to $\Aut(\pr^1\times \pr^3)$-action)}
             f_2&= q_1(y_0,y_1,y_2,y_3), \label{eq: 1,2 ss 2}\\
             g_2&= q_2(y_0,y_1), \notag
             \shortintertext{or}
             f_2&= q_1(y_0,y_1)+y_0y_3,\label{eq: 1,2 ss 3}\\
             g_2&= y_0l_3(y_0,y_1,y_2,y_3), \notag
             \shortintertext{or}
             f_2&= q_1(y_0,y_1)+y_2l_1(y_0,y_1)+ y_3l_2(y_0,y_1), \label{eq: 1,2 ss 4}\\
             g_2&= q_2(y_0,y_1)+y_2l_3(y_0,y_1)+ y_3l_4(y_0,y_1); \notag
             \shortintertext{\item four $\mathbf{A}_1$ if and only if (up to $\Aut(\pr^1\times \pr^3)$-action)}
             f_2&= q_1(y_0,y_1),\label{eq: 1,2 ss 5}\\
             g_2&= q_2(y_2,y_3). \notag
        \end{align}
    \end{enumerate}
\end{lemma}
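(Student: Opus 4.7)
The plan is to reduce to the classification of pencils of quadrics in $\pr^3$ via Corollary \ref{type of singularity remains same}, and then apply the known classification from \cite[\S 4.2, Table 2]{pap22}. Under the identification $X \leftrightarrow (f_2, g_2)$, the natural $\Aut(\pr^1 \times \pr^3) = \operatorname{PGL}(2) \times \operatorname{PGL}(4)$-action on $X = \{x_0 f_2 + x_1 g_2 = 0\}$ decomposes into (i) a $\operatorname{PGL}(2)$-action that replaces the ordered basis $(f_2, g_2)$ by any other ordered basis of the pencil $\langle f_2, g_2 \rangle$, and (ii) a $\operatorname{PGL}(4)$-action that changes coordinates on $\pr^3$ simultaneously in $f_2$ and $g_2$. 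By Corollary \ref{type of singularity remains same}, singular points of $X$ of type $\mathbf{A}_1$ correspond bijectively to nodes of the base locus $C = V(f_2) \cap V(g_2) \subset \pr^3$. Hence it suffices to classify pencils of quadrics in $\pr^3$ whose base locus has precisely $1$, $2$, or $4$ nodes (and no worse singularities) up to this combined action.

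For the forward direction, one works in coordinates adapted to the singular locus of $C$. In case (1), placing the unique node at $P = [1:0:0:0]$ forces $f_2(P) = g_2(P) = 0$, so neither contains $y_0^2$, and forces $\nabla f_2(P), \nabla g_2(P)$ to be non-zero and proportional; the $\operatorname{PGL}(2)$-action on the pencil then lets us equate them, producing a common $y_0 y_3$ term, and absorbing terms $y_0 y_j$ into changes of $y_3$ yields (\ref{eq: 1,2 ss 1}). In case (3), the fact that $C$ has four nodes forces, via the classification of \cite{pap22}, the pencil to contain two rank-$2$ quadrics whose respective singular lines lie in complementary $\pr^1$'s; taking these as basis gives (\ref{eq: 1,2 ss 5}). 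The three subcases (\ref{eq: 1,2 ss 2})--(\ref{eq: 1,2 ss 4}) of the two-node case correspond to three inequivalent Segre symbols for the pencil, distinguished geometrically by whether the line $\ell$ joining the two nodes of $C$ lies on a smooth quadric of the pencil, on a quadric that splits off the hyperplane $\{y_0 = 0\}$, or on no quadric of the pencil at all. In each subcase, one adapts coordinates to $\ell$ and the pair of nodes to read off the normal form.

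The reverse direction is a direct verification via Lemma \ref{1,2 divisors and pencils}: for each normal form, one computes the common vanishing locus of $f_2, g_2$ and the rank of the Jacobian matrix, confirms the number and $\mathbf{A}_1$-type of singularities of $C$, and transfers this to $X$ by Corollary \ref{type of singularity remains same}. For instance, in case (3) one has $C = V(q_1(y_0,y_1)) \cap V(q_2(y_2,y_3))$, a configuration of four lines meeting pairwise in four nodes of $\pr^3$, each visibly of type $\mathbf{A}_1$ by a direct Hessian computation.

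The main technical obstacle will be the two-node case: one must confirm that the three listed forms are mutually inequivalent under $\operatorname{PGL}(2) \times \operatorname{PGL}(4)$ and exhaustive, and in each case rule out degenerations of generic parameter choices in which an $\mathbf{A}_1$ would sharpen to an $\mathbf{A}_n$ or $\mathbf{D}_n$ singularity with $n \geq 2$. This amounts to matching each normal form against the Segre-symbol table of \cite[\S 4.2, Table 2]{pap22} and verifying an open genericity condition on the coefficients of the $q_i$ and $l_j$.
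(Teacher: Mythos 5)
Your proposal follows essentially the same route as the paper: the paper's proof is a one-line citation of Lemma \ref{1,2 divisors and pencils}, Corollary \ref{type of singularity remains same}, and the classification of pencils of quadrics in $\pr^3$ from \cite[\S 4.2, Table 2]{pap22}, which is exactly the reduction you carry out (and then usefully flesh out with the decomposition of the $\Aut(\pr^1\times\pr^3)$-action and the case-by-case normal-form verification). Your version is a correct, more detailed account of the argument the paper leaves implicit.
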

\begin{proof}
    Follows directly from Lemma \ref{1,2 divisors and pencils}, Corollary \ref{type of singularity remains same} and \cite[\S 4.2, Table 2]{pap22}.
\end{proof}

\begin{remark}
    We can proceed in the same way to classify all $(1,2)$-divisors with $\mathbf{A}_2$, $\mathbf{A}_3$, and $\mathbf{D}_4$ singularities, but we omit this classification, as it is not needed in our analysis.
\end{remark}

We may also obtain an explicit description of singular $(1,2)$-divisors.

\begin{lemma}\label{singular 1,2 divisors}
    Let $X = \{f=0\}$ be a $(1,2)$-divisor in $\pr^1\times \pr^3$. Then $X$ is singular if and only if, up to an $\Aut(\pr^1\times \pr^3)$-action, $f$ is given by 
    \begin{align}\label{most general singular 1,2}
        f &= x_0\big(f_2(y_0,y_1,y_2) + y_0y_3\big) + x_1\big(g_2(y_0,y_1,y_2) - y_0y_3\big),
    \end{align}
    or a degeneration of $f$, where the $f_2$ and $g_2$ are degree 2 forms.
\end{lemma}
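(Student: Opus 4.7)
The plan is to prove both directions by invoking Lemma \ref{1,2 divisors and pencils}, which translates singularities of the $(1,2)$-divisor $X$ into singularities of the complete intersection $C = f_2 \cap g_2$ in $\pr^3$. For the reverse direction, starting from the form \eqref{most general singular 1,2}, I would verify directly that the point $P' = [0:0:0:1]$ is a singular point of $C$: both $f_2$ and $g_2$ vanish at $P'$ since neither has a $y_3^2$ term and $y_0 y_3$ vanishes there, and the Jacobian of $(f_2, g_2)$ at $P'$ computes to the matrix with rows $(1,0,0,0)$ and $(-1,0,0,0)$, hence rank one. By Lemma \ref{1,2 divisors and pencils}, the point $([1:1], P') \in X$ is singular. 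The same computation applies to any degeneration of \eqref{most general singular 1,2}.

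For the forward direction, suppose $X$ is singular. By Lemma \ref{1,2 divisors and pencils}, the complete intersection $C = f_2 \cap g_2$ has a singular point $P'$. After an $\SL(4)$-transformation on $\pr^3$, I may assume $P' = [0:0:0:1]$; then $P' \in C$ forces the $y_3^2$ coefficients of $f_2$ and $g_2$ to vanish, and the rank-less-than-two Jacobian condition at $P'$ forces the two vectors of $y_i y_3$ coefficients (for $i = 0, 1, 2$) of $f_2$ and $g_2$ to be linearly dependent. Using the $\SL(2)$-action on $(x_0, x_1)$, which acts via $\SL(2)$-combinations on the pair $(f_2, g_2)$, I can arrange that the $y_i y_3$ coefficient vector of $g_2$ is zero, so that $g_2$ becomes a form purely in $y_0, y_1, y_2$. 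If the corresponding vector for $f_2$ also vanishes, then $f_2$ is likewise a form in $y_0, y_1, y_2$, and $f$ is a degeneration of \eqref{most general singular 1,2}. Otherwise, an $\SL(3)$-transformation of $\pr^3$ fixing $y_3$ rotates the nonzero vector to a multiple of $(1,0,0)$, giving $f_2 = q_1(y_0, y_1, y_2) + \alpha y_0 y_3$; a rescaling inside $\PGL(4)$ normalises $\alpha = 1$. Finally, the $\SL(2)$-action $(f_2, g_2) \mapsto (f_2, g_2 - f_2)$ leaves $f_2$ unchanged while producing the desired $-y_0 y_3$ term in $g_2$, and renaming the resulting forms in $y_0, y_1, y_2$ yields Equation \eqref{most general singular 1,2}.

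The main technical delicacy is ensuring that the successive group actions do not undo each other's normalisations. This is controlled by the observation that the $\SL(3)$-subgroup of $\SL(4)$ fixing $y_3$ preserves both the position of $P'$ at $[0:0:0:1]$ and the property that $g_2$ is free of $y_3$-terms, while the final $\SL(2)$-step preserves $f_2$ exactly and modifies $g_2$ only by subtracting a multiple of $f_2$, so that the only $y_3$-containing monomial introduced into $g_2$ is $-y_0 y_3$. With this sequencing in place, the reduction to Equation \eqref{most general singular 1,2} is complete.
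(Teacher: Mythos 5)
Your proof is correct and follows essentially the same route as the paper: normalise the singular point, observe that the vanishing of the quadrics and the rank-one Jacobian condition kill the $y_3^2$ terms and force the $y_iy_3$-coefficient vectors of the two quadrics to be proportional, and then use the group action to bring the equation to the normal form \eqref{most general singular 1,2} (the paper does this by computing the six partials of $f$ directly at $([1:1],[0:0:0:1])$ and substituting $y_0 \mapsto a_0y_0+a_1y_1+a_2y_2$, which is the same computation you distribute across explicit $\SL(2)$-, $\SL(3)$- and scaling steps). One small point to tighten: Lemma \ref{1,2 divisors and pencils} as stated only relates singular points of $C$ to singular points of $X$ of the form $([1:1],[a:b:c:d])$, so before invoking it in the forward direction you should first apply an element of $\SL(2)$ moving the first coordinate of the singular point of $X$ to $[1:1]$ (which replaces $(f_2,g_2)$ by linear combinations spanning the same pencil and hence leaves $C$ unchanged); this is the same ``without loss of generality'' the paper makes implicitly.
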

\begin{proof}
    Let $X = \{f=0\}$ be a singular $(1,2)$-divisor in $\pr^1\times \pr^3$. Without loss of generality, we may assume that the singular point on $X$ is $P = ([1:1], [0:0:0:1])$. Since $P\in X$, $f$ cannot contain monomials of the form $x_0y_3^2$ or $x_1y_3^2$, i.e. $f$ is given by the equation 
    $$f = x_0\big(f_2(y_0,y_1,y_2) + y_3(a_0y_0+a_1y_1+a_2y_2)\big)+x_1\big(g_2(y_0,y_1,y_2) + y_3(b_0y_0+b_1y_1+b_2y_2)\big),$$
    where the $f_2$ and $g_2$ are degree 2 forms, and the $a_i$ and $b_i$ are non-zero coefficients. 
    
    We now compute the six partial derivatives of $f$ with respect to the coordinates $x_i$, $y_i$.
    Since $P\in \operatorname{Sing}(X)$, these six equations must all vanish at $P$. Substituting for $P$, we get $a_i = -b_i$ for all $i$. Performing the change of variables $y_0 \rightarrow a_0y_0 + a_1y_1 + a_2y_2$, which leaves $P$ invariant, we obtain
    $$f = x_0(f_2(y_0,y_1,y_2) + y_0y_3) + x_1(g_2(y_0,y_1,y_2) - y_0y_3),$$
    where, by abuse of notation, $f_2$ and $g_2$ are the transformed forms of degree $2$. But this is precisely Equation \eqref{most general singular 1,2}. Since we started from a general point and equation, the proof is complete.
\end{proof}

We will now classify GIT strictly semistable divisors. 

\begin{proposition}
\label{semistable 1,2 divisors}
    Let $X$ be a $(1,2)$-divisor in $\pr^1\times \pr^3$. Then $X$ is GIT strictly semistable if and only if $X$ is either
    \begin{enumerate}
        \item  an irreducible singular Fano threefold with one $\mathbf{A}_1$ singularity, such that $\operatorname{Supp}(X) = N^{\oplus}(\lambda_3)$, or
        \item an irreducible singular Fano threefold, with two $\mathbf{A}_1$ singularities, such that $\operatorname{Supp}(X) = N^{\oplus}(\lambda_0)$, or $\operatorname{Supp}(X) = N^{\oplus}(\lambda_1)$, or $\operatorname{Supp}(X) = N^{\oplus}(\lambda_2)$, or $\operatorname{Supp}(X) = N^{\oplus}(\lambda_4)$.
    \end{enumerate}
\end{proposition}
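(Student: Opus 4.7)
The plan is to combine the algorithm output of Section \ref{sec: algorithm output 1,2} with the singularity analysis of Section \ref{sec: stability analysis 1,2}. For the forward direction, I first apply Theorem \ref{thm: main for divisors} together with Table \ref{tab:unonstable 1,2} to observe that every not-stable $(1,2)$-divisor has support, up to a $G$-action, equal to one of the five maximal sets $N^{\oplus}(\lambda_i)$ with $i \in \{0,1,2,3,4\}$. By Theorem \ref{thm:centroid}, a short polytope check shows that the centroid $\mathcal{O}_{1,2}=(\tfrac12,\tfrac12,\tfrac12,\tfrac12)$ lies in the convex hull of each $\xi(N^{\oplus}(\lambda_i))$, so every such family is strictly semistable; this simultaneously implies that no $(1,2)$-divisor is GIT unstable.

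Next, I identify the singularities of the generic divisor in each family. The case $\lambda_3$ is immediate: the generic equation is precisely \eqref{n0l1}, which is the normal form \eqref{eq: 1,2 ss 1} from Lemma \ref{lem: 1,2 divisors with A1 sings}(1), and therefore has exactly one $\mathbf{A}_1$ singularity at $P=([1:1],[0:0:0:1])$ by Lemma \ref{1,2 divisors and pencils} and Corollary \ref{type of singularity remains same}. For $\lambda_0, \lambda_1, \lambda_2, \lambda_4$, I would write out the generic equation $f = x_0 f_2 + x_1 g_2$ with $f_2, g_2$ restricted to the monomials permitted by $\lambda_i$, and exhibit a suitable $\Aut(\pr^1\times\pr^3)$-action bringing each into one of the normal forms \eqref{eq: 1,2 ss 2}, \eqref{eq: 1,2 ss 3}, or \eqref{eq: 1,2 ss 4} from Lemma \ref{lem: 1,2 divisors with A1 sings}(2). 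The intended matching is $\lambda_0 \leftrightarrow$ \eqref{eq: 1,2 ss 4}, $\lambda_1 \leftrightarrow$ \eqref{eq: 1,2 ss 2}, $\lambda_4 \leftrightarrow$ \eqref{eq: 1,2 ss 3}, and $\lambda_2 \leftrightarrow$ \eqref{eq: 1,2 ss 3} after a coordinate permutation in $\pr^3$. Each yields exactly two $\mathbf{A}_1$ singularities. Irreducibility follows because in each case $f_2$ and $g_2$ are generically coprime quadrics, preventing $f$ from factoring as a product of forms of strictly lower bidegree.

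The converse is a direct consequence of Lemma \ref{lem: 1,2 divisors with A1 sings}: an irreducible $(1,2)$-divisor with one or two $\mathbf{A}_1$ singularities has a defining equation $\Aut$-equivalent to one of \eqref{eq: 1,2 ss 1}--\eqref{eq: 1,2 ss 4}, and inspecting the monomials in each such normal form recovers one of the supports $N^{\oplus}(\lambda_i)$; the forward direction then yields strict semistability.

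The main obstacle will be the explicit $\Aut$-matching between each $N^{\oplus}(\lambda_i)$ and the corresponding normal form of Lemma \ref{lem: 1,2 divisors with A1 sings}. This is most delicate for $\lambda_2$, where the generic equation carries the term $y_3 l(y_0,y_1,y_2)$ in $f_2$ but $g_2$ only involves $y_0,y_1,y_2$; the change of variables absorbing this cross term must be checked not to introduce additional singularities, and one must also confirm that the coprimality of $f_2, g_2$ persists after the transformation.
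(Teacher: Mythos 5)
Your overall strategy coincides with the paper's: reduce to the five maximal sets $N^{\oplus}(\lambda_i)$ via Theorem \ref{thm: main for divisors} and Table \ref{tab:unonstable 1,2}, confirm strict semistability with the Centroid Criterion, and then match each family to a normal form from Lemma \ref{lem: 1,2 divisors with A1 sings}. The matchings you give for $\lambda_0,\lambda_1,\lambda_3,\lambda_4$ agree with the paper.

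The gap is your treatment of $\lambda_2$. You claim the generic member of $N^{\oplus}(\lambda_2)$ can be brought to the form \eqref{eq: 1,2 ss 3} by a coordinate permutation and hence has two $\mathbf{A}_1$ singularities. This cannot work: reading off Table \ref{tab:unonstable 1,2}, the generic member of $N^{\oplus}(\lambda_2)$ is $f = x_0\bigl(q_1(y_0,y_1,y_2)+y_3\,l(y_0,y_1,y_2)\bigr)+x_1\,q_2(y_0,y_1,y_2)$, and for generic coefficients every quadric in the pencil $\langle q_1+y_3 l,\ q_2\rangle$ has rank at least $3$ (the discriminant quartic of the pencil acquires exactly one double root, at the rank-$3$ member $q_2$). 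In \eqref{eq: 1,2 ss 3}, by contrast, the pencil contains the rank-$\le 2$ quadric $y_0\,l_3$. Since the rank stratification of the pencil is an $\Aut(\pr^1\times\pr^3)$-invariant, no change of coordinates identifies the two families, and the ``delicate change of variables'' you flag at the end does not exist. A direct computation (or Lemma \ref{1,2 divisors and pencils} applied to the base locus $\{q_1+y_3l=q_2=0\}$, which is a quartic curve with a single node at $[0:0:0:1]$) shows the generic member of $N^{\oplus}(\lambda_2)$ has exactly \emph{one} $\mathbf{A}_1$ singularity, at $([0:1],[0:0:0:1])$; after using the $\pr^1$-factor of the automorphism group to replace $g_2$ by $g_2+f_2$, it lands in normal form \eqref{eq: 1,2 ss 1} alongside $\lambda_3$. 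This is in fact what the paper's own proof asserts (it groups $\lambda_2$ with $\lambda_3$ in the one-$\mathbf{A}_1$ case); the enumeration in the proposition statement, which you followed, appears to misplace $\lambda_2$, and your attempt to justify that placement is where the argument breaks. Separately, your converse direction silently upgrades the containment $\operatorname{Supp}(g\cdot X)\subset N^{\oplus}(\lambda_i)$ furnished by Theorem \ref{thm: main for divisors} to an equality; you should say why a strictly semistable divisor cannot have support a proper subset of every maximal $N^{\oplus}(\lambda_i)$ without becoming unstable or landing in a smaller orbit, or else phrase the classification up to such degenerations as the paper implicitly does.
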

\begin{proof}
    By Theorem \ref{thm: main for divisors} and the Centroid Criterion (Theorem \ref{thm:centroid}), we know that $X$ is strictly semistable if and only if $\operatorname{Supp}(X) = N^{\oplus}(\lambda_i)$, for $0\leq i\leq 4$.
    Notice that a divisor $X$ that has $\operatorname{Supp}(X) = N^{\oplus}(\lambda_i)$ for all $i$, corresponds to each of the four possibilities presented in Lemma \ref{lem: 1,2 divisors with A1 sings}.1 and \ref{lem: 1,2 divisors with A1 sings}.2. Hence, $X$ has either one or two $\mathbf{A}_1$ singularities. If $X$ has two $\mathbf{A}_1$ singularities, then the monomials in its support correspond to one of $N^{\oplus}(\lambda_0)$, $N^{\oplus} (\lambda_1)$, or $N^{\oplus}(\lambda_4)$ (and $X$ is given by Equation \eqref{eq: 1,2 ss 4}, \eqref{eq: 1,2 ss 2}, or \eqref{eq: 1,2 ss 3} respectively). If $X$ has one $\mathbf{A}_1$ singularity, then the monomials in its support are given by $N^{\oplus}(\lambda_3)$ or $N^{\oplus}(\lambda_2)$, thus $X$ is given by Equation \eqref{eq: 1,2 ss 1}.
\end{proof}

\begin{corollary}
    A $(1,2)$-divisor in $\pr^1\times \pr^3$ is strictly semistable if and only if it has $\mathbf{A}_1$ singularities.
\end{corollary}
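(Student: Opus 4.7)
The plan is to deduce this as an almost immediate consequence of Proposition \ref{semistable 1,2 divisors}, together with the normal form classification of Lemma \ref{lem: 1,2 divisors with A1 sings} and the already-quoted characterization of stability by smoothness (Theorem \ref{stable 1,2 divisors}).

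For the forward direction, if $X$ is strictly semistable, then Proposition \ref{semistable 1,2 divisors} shows that $X$ has exactly one or two $\mathbf{A}_1$ singularities; in particular, every singular point of $X$ is of type $\mathbf{A}_1$, so $X$ has $\mathbf{A}_1$ singularities. No further work is needed here.

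For the converse, suppose every singular point of $X$ is of type $\mathbf{A}_1$. First, since $X$ is singular, Theorem \ref{stable 1,2 divisors} rules out stability, so it suffices to verify semistability. By Lemma \ref{lem: 1,2 divisors with A1 sings}, after an $\Aut(\pr^1 \times \pr^3)$-action we may assume that the defining equation of $X$ is one of \eqref{eq: 1,2 ss 1}--\eqref{eq: 1,2 ss 5}. For each of these normal forms one reads off the monomial support directly and checks that it is contained in $N^{\oplus}(\lambda_i)$ for some $i \in \{0,1,2,3,4\}$ from Table \ref{tab:unonstable 1,2}: for instance, \eqref{eq: 1,2 ss 1} matches $N^{\oplus}(\lambda_3)$ (or the subcase inside $N^{\oplus}(\lambda_2)$), and \eqref{eq: 1,2 ss 2}--\eqref{eq: 1,2 ss 5} match $N^{\oplus}(\lambda_1)$, $N^{\oplus}(\lambda_2)$, $N^{\oplus}(\lambda_4)$, and $N^{\oplus}(\lambda_0)$ respectively. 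Since each $\operatorname{Supp}(X)$ already corresponds to a semi-destabilizing set for which the Centroid Criterion (Theorem \ref{thm:centroid}) is satisfied, $X$ is semistable, hence strictly semistable.

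The main potential obstacle is the bookkeeping in matching each of the five normal forms of Lemma \ref{lem: 1,2 divisors with A1 sings} with the correct $N^{\oplus}(\lambda_i)$; this amounts to performing an appropriate change of coordinates so that the distinguished point of the singularity sits in the position that realises the destabilising one-parameter subgroup. Once that dictionary is written down, the corollary follows with no new computation.
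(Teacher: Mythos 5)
Your overall route --- the forward direction read off from Proposition \ref{semistable 1,2 divisors}, the converse via the normal forms of Lemma \ref{lem: 1,2 divisors with A1 sings} --- is exactly the implicit derivation the paper intends; the corollary is stated there without proof as an immediate consequence of Proposition \ref{semistable 1,2 divisors}. However, the final inference in your converse is not valid as written. You argue: $\operatorname{Supp}(X)$ is \emph{contained} in some $N^{\oplus}(\lambda_i)$, the set $N^{\oplus}(\lambda_i)$ passes the Centroid Criterion, therefore $X$ is semistable. The Centroid Criterion (Theorem \ref{thm:centroid}) is a statement about $\operatorname{Conv}(\operatorname{Supp}(X))$ itself, and passing from a set of monomials to a subset shrinks the convex hull, which can expel the centroid: a divisor supported on the single monomial $x_0y_0^2$ has support contained in every column of Table \ref{tab:unonstable 1,2}, yet it is plainly unstable. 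Containment of the support in $N^{\oplus}(\lambda_i)$ only yields ``not stable'' via Theorem \ref{thm: main for divisors}; it never yields semistability.

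The repair is short but must be said. For each normal form \eqref{eq: 1,2 ss 1}--\eqref{eq: 1,2 ss 5}, the hypothesis that the singularities are exactly of type $\mathbf{A}_1$ (and not worse) forces the coefficients to be generic enough that, after the coordinate change you describe, $\operatorname{Supp}(X)$ is \emph{equal} to the corresponding $N^{\oplus}(\lambda_i)$ --- this equality is precisely what Proposition \ref{semistable 1,2 divisors} records, and only at that point does the Centroid Criterion legitimately apply to $X$. Alternatively, simply cite the ``if'' direction of Proposition \ref{semistable 1,2 divisors} for semistability rather than re-running the criterion on a containment. One further small point: your case \eqref{eq: 1,2 ss 5} (four $\mathbf{A}_1$ points) is the strictly polystable toric threefold of Proposition \ref{polystable 1,2 divisors}, which the paper's Theorem \ref{thm:intro} lists separately from the strictly semistable locus; you should state explicitly that you read ``strictly semistable'' as ``semistable and not stable'' so that this case is genuinely covered by the claim rather than a counterexample to it.
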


An immediate consequence of Proposition \ref{semistable 1,2 divisors} and Lemma \ref{singular 1,2 divisors} is the characterisation of all GIT stable points. 

\begin{theorem}\label{stable 1,2 divisors}
    Let $X$ be a $(1,2)$-divisor in $\pr^1\times \pr^3$. Then $X$ is GIT stable if and only if $X$ is smooth.
\end{theorem}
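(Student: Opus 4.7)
The plan is to prove the biconditional by arguing that, in this GIT problem, ``not stable'' coincides exactly with ``singular.'' The key structural observation, visible from the algorithm output in Section~\ref{sec: algorithm output 1,2}, is that every maximal family $N^\oplus(\lambda_i)$ satisfies the Centroid Criterion (Theorem~\ref{thm:centroid}). Consequently, there are \emph{no} unstable $(1,2)$-divisors: the notions ``not stable'' and ``strictly semistable'' collapse. Combined with the corollary to Proposition~\ref{semistable 1,2 divisors}, this tells us that $X$ is not stable if and only if $X$ carries an $\mathbf{A}_1$ singularity.

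For the direction ``stable $\Rightarrow$ smooth,'' I will argue by contrapositive. Suppose $X$ is singular. Then by Lemma~\ref{singular 1,2 divisors}, after acting by a suitable element of $\Aut(\pr^1 \times \pr^3)$, the defining polynomial of $X$ takes the form \eqref{most general singular 1,2}, or a degeneration thereof. Inspecting the monomials appearing in \eqref{most general singular 1,2} and comparing with Table~\ref{tab:unonstable 1,2}, one sees immediately that $\operatorname{Supp}(g \cdot X) \subseteq N^\oplus(\lambda_3)$ for a suitable $g \in G$. Theorem~\ref{thm: main for divisors} then yields that $X$ is not stable, proving the contrapositive.

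For the converse ``smooth $\Rightarrow$ stable,'' suppose $X$ is smooth. Then $X$ carries no singularities, in particular no $\mathbf{A}_1$ singularities, so by Proposition~\ref{semistable 1,2 divisors} (and the fact that all five maximal families are accounted for) $X$ cannot be strictly semistable. Combined with the absence of unstable divisors noted in the first paragraph, we conclude that $X$ must be stable. Both directions assembled, we obtain the theorem.

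There is no serious obstacle here: the nontrivial work was already carried out in Proposition~\ref{semistable 1,2 divisors} (which relied on the monomial tables and the detailed singularity analysis in Lemmas~\ref{1,2 divisors and pencils}--\ref{lem: 1,2 divisors with A1 sings}) and in Lemma~\ref{singular 1,2 divisors} (which showed that every singular divisor can be put in the canonical form whose monomial support is $N^\oplus(\lambda_3)$). The present statement is simply the dovetailing of these two results with the observation that the algorithm produces no unstable locus.
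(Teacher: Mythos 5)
Your first direction (stable $\Rightarrow$ smooth, via Lemma~\ref{singular 1,2 divisors}, the observation that the support of the normal form \eqref{most general singular 1,2} is contained in $N^{\oplus}(\lambda_3)$, and Theorem~\ref{thm: main for divisors}) is exactly the paper's argument and is fine. The problem is the converse, which you route through the claim that ``there are no unstable $(1,2)$-divisors'' and hence that ``not stable'' is equivalent to ``strictly semistable,'' equivalently to ``carries an $\mathbf{A}_1$ singularity.'' That claim is false. The Centroid Criterion applied to each maximal family $N^{\oplus}(\lambda_i)$ only shows that the divisor whose support is \emph{all} of $N^{\oplus}(\lambda_i)$ is semistable; it says nothing about divisors whose support is a \emph{proper subset} of $N^{\oplus}(\lambda_i)$, whose convex hull is smaller and may miss the centroid. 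Indeed the unstable locus is nonempty: for example $X=\{x_0y_0^2=0\}$ satisfies $\mu(X,\lambda)>0$ for $\lambda=(1,-1,3,-1,-1,-1)$, and more generally any $(1,2)$-divisor with a singularity worse than $\mathbf{A}_1$ is, by the paper's own Proposition~\ref{semistable 1,2 divisors} and its corollary, not strictly semistable yet certainly not stable, hence unstable. So the dichotomy ``stable or strictly semistable'' on which your second paragraph-to-last rests does not hold.

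The direction you want, smooth $\Rightarrow$ stable, does not need that dichotomy. Argue the contrapositive directly: if $X$ is not stable, Theorem~\ref{thm: main for divisors} gives $g\in G$ with $\operatorname{Supp}(g\cdot X)\subseteq N^{\oplus}(\lambda_i)$ for one of the five maximal sets; then one checks that \emph{every} divisor (not just the generic one) supported in any $N^{\oplus}(\lambda_i)$ is singular --- either by exhibiting an explicit singular point (e.g.\ for $N^{\oplus}(\lambda_3)$ every member vanishes along $\pr^1\times\{[0:0:0:1]\}$ and all six partials vanish at $([b:-a],[0:0:0:1])$ where $ay_0y_3$, $by_0y_3$ are the two $y_3$-terms), or by noting that singularity is a closed condition on the sub-linear system and the generic member is singular by Lemma~\ref{lem: 1,2 divisors with A1 sings}. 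This yields $X$ singular, which is the contrapositive you need. This is essentially what the paper's (admittedly terse) ``it is not not-stable, i.e.\ it is stable'' is standing in for; as written, your version substitutes a false intermediate statement for that step.
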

\begin{proof}
    We will show that all singular $(1,2)$-divisors are not stable. Let $X$ be an arbitrary singular $(1,2)$-divisor in $\pr^1\times \pr^3$. Then, by Lemma \ref{singular 1,2 divisors}, $X = \{f = 0\}$, where $f$ is given by Equation \eqref{most general singular 1,2}. But notice that, by the discussion in Section \ref{sec: algorithm output 1,2}, we have $\operatorname{Supp}(X) = N^{\oplus}(\lambda_3)$. Hence, by Theorem \ref{thm: main for divisors}, $X$ is not stable. Since $X$ was an arbitrary singular $(1,2)$-divisor, this implies that all singular $(1,2)$-divisors are not stable. Conversely, if $X$ is a smooth $(1,2)$-divisor, it is not not-stable, i.e., it is stable.
\end{proof}

Using Proposition \ref{thm: polystable divisors} we will also classify GIT strictly polystable divisors.

\begin{proposition}
    \label{polystable 1,2 divisors}
    Let $X$ be a $(1,2)$-divisor in $\pr^1\times \pr^3$. Then $X$ is GIT strictly polystable if and only if, up to an $\Aut(\pr^1 \times \pr^3)$-action, $X$ is given by 
    $$f= (y_1y_2 -y_0y_3)x_0 + (-y_1y_2 - y_0y_3)x_1,$$
    and $X$ is a toric Fano threefold with four $\mathbf{A}_1$ singularities.
\end{proposition}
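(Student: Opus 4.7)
The plan is to prove this in two parts: first verify directly that the divisor $X_0$ in the statement has all the claimed properties, and then establish uniqueness via Proposition~\ref{thm: polystable divisors} combined with the classification of strictly semistable divisors.

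\textbf{Part 1 (Properties of $X_0$).} Let $X_0 = \{f = 0\}$ with $f$ as in the statement. After the invertible $\SL(2)_x$-change of coordinates $u_0 = x_0 - x_1,\ u_1 = -(x_0 + x_1)$, the equation simplifies to $f = u_0 y_1 y_2 + u_1 y_0 y_3$, which I would use for computations. Semistability follows from Theorem~\ref{thm:centroid}: the four support monomials map under $\xi$ to the vertices of a parallelogram in $\Z^4$ whose barycentre is precisely $\mathcal{O}_{1,2} = (\tfrac12, \tfrac12, \tfrac12, \tfrac12)$. Polystability then follows by exhibiting a one-parameter subgroup $\lambda = (0,0,2,1,-1,-2) \in S_{1,1}^{1,2}$ with $\operatorname{Supp}(X_0) = \operatorname{Ann}(\lambda)$, so that $\lambda$ lies in $\Stab(X_0)$; by \cite[Remark~8.1(5)]{dolgachev_2003} the orbit of $X_0$ is closed in the semistable locus. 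For toricity, the 1-PS $\lambda$ together with $\lambda_0 = (0,0,1,1,-1,-1)$ generates a 2-dimensional subtorus of $\Stab(X_0)$ acting faithfully on $X_0$, which realises $X_0$ as a toric threefold. Finally, I would compute the Jacobian of $f$ directly: its vanishing locus consists of exactly the four points $([0:1], [1:0:0:0])$, $([0:1], [0:0:0:1])$, $([1:0], [0:1:0:0])$, $([1:0], [0:0:1:0])$, and at each point suitable local coordinates bring $f$ into the form $\alpha\beta + \gamma\delta$, confirming an $\mathbf{A}_1$ node.

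\textbf{Part 2 (Uniqueness).} Let $X$ be any strictly polystable $(1,2)$-divisor. By Proposition~\ref{thm: polystable divisors} there exist $g \in G$ and $\lambda' \in S_{1,1}^{1,2}$ with $\operatorname{Supp}(g \cdot X) = \operatorname{Ann}(\lambda')$. Since $X$ is, in particular, strictly semistable, Proposition~\ref{semistable 1,2 divisors} forces $\operatorname{Supp}(g \cdot X) \subseteq N^{\oplus}(\lambda_j)$ for some $j \in \{0, \ldots, 4\}$; hence $\operatorname{Ann}(\lambda') \subseteq N^{\oplus}(\lambda_j)$. I would then proceed by case analysis on the five annihilator sets listed in Table~\ref{tab:closed orbits 1,2}, viewing $g \cdot X$ as defining a pencil of quadrics in $\pr^3$ parametrised by $[x_0:x_1] \in \pr^1$. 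In each case, using the residual $\SL(2) \times \SL(4)$-freedom together with the standard Kronecker--Segre normal forms for pencils of matrices, one brings $g \cdot X$ into a diagonal pencil with two rank-$2$ degenerate fibres; the reductivity of the stabiliser rules out intermediate degenerations and forces the normal form $u_0 y_1 y_2 + u_1 y_0 y_3$.

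The main obstacle will be carrying out the case analysis cleanly for all five annihilator sets, particularly using the constraint of reductive stabiliser to eliminate divisors whose pencil of quadrics has a non-hyperbolic degenerate fibre; in practice, this amounts to showing that within each $\operatorname{Ann}(\lambda_j)$ the only orbit with reductive positive-dimensional stabiliser is that of $X_0$, at which point the equivalence with the normal form, together with Part~1, completes the proof.
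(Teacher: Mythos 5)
Your overall strategy coincides with the paper's: the ``only if'' direction is Proposition \ref{thm: polystable divisors} followed by a reduction of the five annihilator families of Table \ref{tab:closed orbits 1,2} to the single normal form, and the identification of the four $\mathbf{A}_1$ points at the end. Where you add something is Part 1: the paper invokes Proposition \ref{thm: polystable divisors} as an equivalence, although as stated it only gives the forward implication, so your explicit verification that $X_0$ is semistable (centroid criterion, and indeed $\mathcal{O}_{1,2}=(\tfrac12,\tfrac12,\tfrac12,\tfrac12)$ is the barycentre of the parallelogram $\xi(\operatorname{Supp}(X_0))$) and has a positive-dimensional stabiliser is a worthwhile supplement. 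Your appeal to Kronecker--Segre normal forms for the pencil of quadrics is also a cleaner organising principle than the paper's ad hoc coordinate changes.

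That said, two steps need repair. First, Part 2 is where the substantive work lies and you defer it entirely: it is \emph{not} true that every divisor with $\operatorname{Supp}(g\cdot X)=\operatorname{Ann}(\lambda_i)$ and nonzero coefficients lies in the orbit of $X_0$ --- for instance in $\operatorname{Ann}(\lambda_1)$ one may take $x_0(y_2+y_3)^2+x_1q(y_0,y_1)$, whose singular locus is positive-dimensional --- so the promised elimination of ``intermediate degenerations'' is precisely the content of the proposition and must actually be carried out for each of the five families (the paper does this, tersely, by explicit changes of coordinates). Second, the inference ``$\Stab(X_0)$ contains a torus, hence by \cite[Remark 8.1(5)]{dolgachev_2003} the orbit is closed'' is not valid: an infinite (even reductive) stabiliser does not imply a closed orbit, and the cited remark gives the converse implication. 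To obtain closedness you should instead use that the closure of every semistable orbit contains a unique closed orbit, together with the fact that each strictly semistable $X$ degenerates under $\lambda_i$ to a divisor supported on $\operatorname{Ann}(\lambda_i)$, which by the (completed) Part 2 lies in the orbit of $X_0$; minimality of that orbit then forces it to be closed. Finally, a small slip: the $\pr^1$-coordinates of your four nodes are interchanged --- with $f=u_0y_1y_2+u_1y_0y_3$ the singular points are $([1:0],[1:0:0:0])$, $([1:0],[0:0:0:1])$, $([0:1],[0:1:0:0])$ and $([0:1],[0:0:1:0])$, which match the paper's list after the substitution $u_0=x_0-x_1$, $u_1=-(x_0+x_1)$.
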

\begin{proof}
    By Proposition \ref{thm: polystable divisors}, we know that $X$ is strictly polystable if and only if $\operatorname{Supp}(X) = \operatorname{Ann}(\lambda_i)$, for some $0\leq i\leq 4$. 
    Notice that, up to an $\Aut(\pr^1\times \pr^3)$-action, the condition $\operatorname{Supp}(X) = \operatorname{Ann}(\lambda_i)$ yields the same singular threefold. For instance, let $\operatorname{Supp}(X) = \operatorname{Ann}(\lambda_3)$. Then $X = \{f = 0\}$, where $f$ is given by the equation
    \begin{align}\label{eq: 1,2 polystable}
        f &=  (d_2y_1^2 + d_1y_1y_2 + d_0y_2^2 + d_3y_0y_3)x_0 + (d_6y_1^2 + d_5y_1y_2 + d_4y_2^2 +d_7y_0y_3)x_1, 
    \end{align}
    where the $d_i$ are non-zero coefficients. We can choose a suitable $\Aut(\pr^1\times \pr^3)$-action such that the equation is given by 
    \begin{equation}\label{eq: 1,2 polystable refined}
        f =  (y_1y_2 -y_0y_3)x_0 + (-y_1y_2 - y_0y_3)x_1.
    \end{equation}
    Then $X$ is the unique irreducible toric Fano threefold, with four $\mathbf{A}_1$ singularities at
    $P = ([1:-1],[1:0:0:0])$, $Q = ([1:1],[0:1:0:0])$, $R = ([1:1],[0:0:1:0])$ and $S = ([1:-1],[0:0:0:1])$.
    
    Now let $\operatorname{Supp}(X) = \operatorname{Ann}(\lambda_0)$. Then $X = \{f = 0\}$, where $f$ is given by the equation 
    \begin{align}\label{eq: 1,2 polystable second description}
        f = \, \, &(d_3y_0y_2 + d_1y_1y_2 + d_2y_0y_3 + d_0y_1y_3)x_0 + \\
        &(d_7y_0y_2 +d_5y_1y_2+ d_6y_0y_3 + d_4y_1y_3)x_1, \notag
    \end{align}
    where the $d_i$ are non-zero coefficients. As before, it is possible to pick a suitable $\Aut(\pr^1\times \pr^3)$-action such that the equation is given by \eqref{eq: 1,2 polystable refined}. We can proceed in the same way with the other sets  $\operatorname{Ann}(\lambda_i)$, for $i=1$, $2$, $3$, to show that there exists a unique strictly polystable threefold up to $\Aut(\pr^1\times \pr^3)$-action, which is given by Equation \eqref{eq: 1,2 polystable refined}. Furthermore, it is the unique singular threefold with four $\mathbf{A}_1$ singularities, as required.
\end{proof}

\subsection{An Alternative Compactification of the K-Moduli of Fano Threefolds 2.25}\label{k-moduli 2.25}
In \cite[\S 5]{pap22}, the third author obtained an explicit description of the K-moduli space of Fano threefolds 2.25, using a different GIT description. Smooth Fano threefolds in family 2.25 are also blow ups of complete intersections of two quadrics in $\pr^3$. By classifying the GIT quotient of complete intersections of two quadrics in $\pr^3$, the third author showed that the K-moduli stack $\mathcal{M}^K_{2.25}$  parametrising K-semistable members of family 2.25, with good moduli space ${M}^K_{2.25}$, is isomorphic to the GIT quotient stack $\mathcal{M}^{GIT}_{3,2,2}$ parametrising GIT semistable complete intersections of two quadrics in $\pr^3$, with GIT quotient ${M}^{GIT}_{3,2,2}$. In this Section, we will obtain an alternative description for $\mathcal{M}^K_{2.25}$ and ${M}^K_{2.25}$, using the GIT moduli stack $\mathcal{M}^{GIT}_{1,2}$ parametrising GIT semistable $(1,2)$-divisors in $\pr^1\times \pr^3$, and the GIT quotient ${M}^{GIT}_{1,2}$ we classified in Section \ref{sec: stability analysis 1,2}.

Notice that the unique GIT polystable threefold $\tilde{X}$ described in Proposition \ref{polystable 1,2 divisors} is the unique singular toric Fano threefold with four $\mathbf{A}_1$ singularities. This threefold has polyhedron with barycentre $0$ (c.f. \cite[\S 5]{pap22}). Hence, by \cite{batyrev}, {\cite[Theorem 1.2]{fujita_volume}}, {\cite[Corollary 1.2.]{Berman}}, the Futaki invariant on $X$ vanishes and $X$ admits a K{\"a}hler-Einstein metric. Therefore, $X$ is K-polystable. In particular, this is the terminal toric Fano threefold with Reflexive ID $\#199$ in the Graded Ring Database (GRDB) ($3$-fold $\#255743$) \cite{Kas08}, which was shown to be K-polystable in \cite[Lemma 5.1]{pap22}. Furthermore, Theorem \ref{stable 1,2 divisors} shows that smooth $(1,2)$-divisors in $\pr^1\times \pr^3$, i.e. smooth Fano threefolds in family 2.25 are GIT stable. These are known to be K-stable by \cite[Corollary 4.32]{araujo_2023}. We will use the reverse moduli continuity method  \cite[\S 5]{pap22} to prove the following results.

\begin{lemma}\label{strictly k-ss for 2.25}
Let $X$, be a GIT strictly semistable $(1,2)$-divisor in $\pr^1\times \pr^3$. Then $X$ is strictly K-semistable.
\end{lemma}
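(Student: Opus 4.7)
The plan is to use a reverse moduli continuity argument together with openness of K-semistability. By Proposition \ref{semistable 1,2 divisors} and Proposition \ref{polystable 1,2 divisors}, any GIT strictly semistable $(1,2)$-divisor $X$ has $\operatorname{Supp}(X) = N^{\oplus}(\lambda_i)$ for some $i \in \{0,1,2,3,4\}$, and the corresponding one-parameter subgroup $\lambda_i \in S_{1,1}^{1,2}$ produces a $G$-equivariant one-parameter family $\pi \colon \mathcal{X} \to \mathbb{A}^1$ whose fibres over $t \neq 0$ are $G$-translates of $X$ (hence isomorphic to $X$), and whose central fibre is the unique GIT strictly polystable divisor $\tilde X$ from Proposition \ref{polystable 1,2 divisors}.

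I would then recall that $\tilde X$ is K-polystable: as noted in the paragraphs preceding the lemma, $\tilde X$ is the terminal toric Fano threefold $\#199$ in the Graded Ring Database, whose reflexive polytope has barycentre at the origin, so it admits a K\"ahler--Einstein metric and is K-polystable by \cite{batyrev, fujita_volume, Berman} (cf.\ \cite[Lemma 5.1]{pap22}). In particular $\tilde X$ is K-semistable.

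The central step is to transfer K-semistability from the special fibre $\tilde X$ to the general fibre $X$. The family $\pi$ is a $\mathbb{Q}$-Gorenstein family of klt Fano varieties: every fibre is a $(1,2)$-divisor in $\pr^1 \times \pr^3$, hence Fano and polarised by the restriction of $\mathcal{O}_{\pr^1 \times \pr^3}(1,2)$, and by the classification in Proposition \ref{semistable 1,2 divisors} each fibre has at worst $\mathbf{A}_1$ singularities, which are klt. By the openness of K-semistability in $\mathbb{Q}$-Gorenstein families of klt Fanos (a standard result in the K-moduli literature, see e.g.\ \cite[Theorem 1.1]{ascher2020kmoduli} and the references therein), the locus in $\mathbb{A}^1$ parametrising K-semistable fibres is Zariski open; since it contains $0$ and $\mathbb{A}^1$ is irreducible, it is cofinite and hence contains some $t \neq 0$. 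The fibre over such $t$ is isomorphic to $X$, so $X$ is K-semistable. To conclude \emph{strict} K-semistability, note that $X \not\cong \tilde X$ (since $X$ has at most two $\mathbf{A}_1$ singularities while $\tilde X$ has four), and the test configuration $\pi$ exhibits $\tilde X$ as a non-trivial degeneration of $X$; hence $X$ cannot be K-polystable and must be strictly K-semistable.

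The main obstacle is ensuring that the openness of K-semistability applies to this specific family: one must verify that $\pi$ fits the hypotheses of the available openness theorems, which should be automatic here since all fibres are simultaneously embedded in $\pr^1 \times \pr^3$ and anticanonically polarised by the restriction of the same line bundle, and to cite a sufficiently general openness result that covers families of Fanos with klt fibres of this type.
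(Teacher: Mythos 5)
Your proposal follows essentially the same route as the paper: degenerate $X$ to the unique polystable toric threefold $\tilde X$ via the destabilising one-parameter subgroup, invoke K-polystability of $\tilde X$ together with openness of K-semistability to get K-semistability of $X$, and use $X \not\cong \tilde X$ (via uniqueness of K-polystable degenerations, which the paper cites as \cite[Theorem 1.1]{blum-xu}) to conclude strictness. The only discrepancy is a citation detail — the openness statement you want is \cite{blum2021openness} rather than \cite[Theorem 1.1]{ascher2020kmoduli} — but the argument itself matches the paper's.
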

\begin{proof}
Let $X$ be a GIT strictly semistable $(1,2)$-divisor in $\pr^1\times \pr^3$. Since $X$ is GIT strictly semistable, there exists a one-parameter subgroup $\lambda$ such that we have $\lim_{t\to 0} \lambda(t)\cdot X = \tilde{X}$, where $\tilde{X}$ is the unique GIT strictly polystable toric Fano threefold with 
four $\mathbf{A}_1$ singularities, described in Proposition \ref{polystable 1,2 divisors}. This one-parameter subgroup induces a family $f\colon \mathcal{X} \to \pr^1$, over $\pr^1$, such that the fibers $\mathcal{X}_t$ are isomorphic to $X$ for all $t\neq 0$, and $\mathcal{X}_0\cong \tilde{X}$. We thus have a map $\mathcal{X} \to \pr^1$ which is naturally a test configuration of $\mathcal{X}$ with central fibre $\mathcal{X}_0$.

Hence, we have constructed a test configuration $f\colon \mathcal{X} \to B$, where the central fiber $\mathcal{X}_0\cong \tilde{X}$ is a klt Fano threefold, which is K-polystable by the discussion above. Furthermore, the general fiber $\mathcal{X}_t\cong X$ is not isomorphic to $\mathcal{X}_0$. By \cite[Theorem 1.1]{blum-xu} and \cite{blum2021openness}, the central fiber $\mathcal{X}_t \cong X$ is strictly K-semistable. 
\end{proof}

We are now in a position to describe the K-moduli space ${M}^K_{2.25}$ in terms of the GIT quotient ${M}^{GIT}_{1,2}$.

\begin{theorem}\label{thm: 2.25 compactification}
There exists an isomorphism $\mathcal{M}^K_{2.25} \cong \mathcal{M}^{GIT}_{1,2}$. In particular, there exists an isomorphism ${M}^K_{2.25} \cong {M}^{GIT}_{1,2}\cong \pr^1$.
\end{theorem}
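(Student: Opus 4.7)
The plan is to apply the reverse moduli continuity method in the style of \cite[\S 5]{pap22} (cf.\ also \cite{odaka_spotti_sun_2016, liu-xu}). Concretely, I would construct a natural morphism $\Phi\colon \mathcal{M}^{GIT}_{1,2}\to \mathcal{M}^K_{2.25}$ and prove it is an isomorphism by combining the GIT classification obtained in Section \ref{sec: stability analysis 1,2} with properness and separatedness of both stacks.

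First, I would check that every GIT semistable $(1,2)$-divisor is K-semistable, which produces $\Phi$ from the universal family of GIT semistable $(1,2)$-divisors in $\pr^1\times\pr^3$. If $X$ is GIT stable, Theorem \ref{stable 1,2 divisors} gives that $X$ is smooth, hence a general member of family 2.25 and K-stable by \cite[Corollary 4.32]{araujo_2023}. If $X$ is GIT strictly semistable, K-semistability is the content of Lemma \ref{strictly k-ss for 2.25}. Together this shows that the morphism $\Phi$ is well-defined on the level of Artin stacks.

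Second, I would argue that $\Phi$ is an isomorphism. Both stacks are proper, reduced, irreducible of dimension one, and on the dense open substack parametrising smooth members the morphism $\Phi$ is an isomorphism, because smooth $(1,2)$-divisors are exactly the smooth members of family 2.25 and the $G$-action on such a divisor coincides, up to the finite component group of $\Aut(\pr^1\times\pr^3)$, with its K-moduli automorphism action. By Proposition \ref{polystable 1,2 divisors}, the unique GIT strictly polystable $(1,2)$-divisor is the toric Fano threefold $\tilde{X}$ with four $\mathbf{A}_1$ singularities, and by the discussion preceding Lemma \ref{strictly k-ss for 2.25} (using \cite{batyrev, fujita_volume, Berman} and \cite[Lemma 5.1]{pap22}) this $\tilde{X}$ is the unique strictly K-polystable member of family 2.25. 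Hence $\Phi$ induces a bijection on closed points. Combined with properness on both sides, a Luna slice comparison at $\tilde{X}$ then upgrades $\Phi$ to an isomorphism of stacks; passing to good moduli spaces yields ${M}^K_{2.25}\cong {M}^{GIT}_{1,2}$, and the identification with $\pr^1$ is then immediate from \cite[\S 5]{pap22}.

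The main obstacle I anticipate is the last step: promoting $\Phi$ from a bijection on closed points to an isomorphism of stacks. This requires matching the local structure at $\tilde{X}$, i.e.\ verifying that the stabiliser of $\tilde{X}$ in $G$ coincides with its reductive automorphism group in the K-moduli sense and that the Luna slice at $\tilde{X}$ in $|\mathcal{O}(1,2)|\sslash G$ carries exactly the same family of K-semistable deformations of $\tilde{X}$. This is ensured by the openness of K-semistability \cite{blum2021openness} and the uniqueness of K-polystable degenerations \cite[Theorem 1.1]{blum-xu}, applied to the one-parameter degenerations $\lim_{t\to 0}\lambda(t)\cdot X = \tilde{X}$ already used in the proof of Lemma \ref{strictly k-ss for 2.25}.
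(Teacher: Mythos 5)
Your proposal is correct and follows the same overall strategy as the paper: the reverse moduli continuity method, with exactly the same inputs (Theorem \ref{stable 1,2 divisors}, Proposition \ref{polystable 1,2 divisors}, Lemma \ref{strictly k-ss for 2.25}, and the K-polystability of the toric threefold $\tilde{X}$) used to produce the morphism $\mathcal{M}^{GIT}_{1,2}\to\mathcal{M}^K_{2.25}$. The only genuine difference is the concluding step. You propose to upgrade a bijection on closed points to an isomorphism of stacks via a Luna slice comparison at $\tilde{X}$ together with properness and the uniqueness of K-polystable degenerations \cite{blum-xu, blum2021openness}; this is the local route familiar from \cite{odaka_spotti_sun_2016} and requires you to actually match the stabiliser of $\tilde{X}$ in $G$ with its automorphism group as a K-polystable Fano and to compare the two slices, which is the step you correctly flag as the main obstacle. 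The paper instead avoids any local analysis: it observes that the map is already an open immersion of stacks (by openness of K-semistability), deduces that the induced map on good moduli spaces is injective, quasi-finite and separated, applies \cite[Prop.\ 6.4]{Alper} to get finiteness, and concludes by Zariski's Main Theorem that a finite birational morphism to a normal variety is an isomorphism. The paper's global argument is shorter and sidesteps the slice comparison entirely; your approach is equally valid but would need the stabiliser identification at $\tilde{X}$ carried out explicitly. One small imprecision to watch: you assert both \emph{stacks} are proper, whereas it is the good moduli spaces that are proper; the stacks themselves are not separated, so the properness input should be phrased at the level of the moduli spaces (or as S-completeness of the stacks).
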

\begin{proof}
    We argue as in \cite[Theorem 5.5]{pap22}. By Theorem 
\ref{stable 1,2 divisors}, Proposition \ref{polystable 1,2 divisors}, Lemma \ref{strictly k-ss for 2.25} and the discussion at the start of this Section, we have an open immersion of a representable morphism of stacks:
\begin{center}
    \begin{tikzcd}
     \mathcal{M}^{GIT}_{1,2}\arrow[r, "\phi"] & \mathcal{M}^K_{2.25}\\
    \left[X\right] \arrow[r, mapsto, "\phi"] & \left[X\right].
    \end{tikzcd}
\end{center}

Since $\phi$ is an open immersion of stacks, by \cite[Lemma 06MY]{stacks-project}, $\phi$ is separated. Moreover, since it is injective, it is also quasi-finite. We now need to check that $\phi$ is an isomorphism that descends (as an isomorphism of schemes) to the moduli spaces
\begin{center}
    \begin{tikzcd}
     {M}^{GIT}_{1,2}\arrow[r, "\overline{\phi}"] & M^K_{2.25}\\
    \left[X\right] \arrow[r, mapsto, "\overline{\phi}"] & \left[X\right].
    \end{tikzcd}
\end{center}
As we have an open immersion $\phi$ of stacks, both of which admit moduli spaces, $\overline \phi$ is injective. 
Now, by \cite[Prop 6.4]{Alper}, since $\phi$ is representable, quasi-finite and separated, $\overline \phi$ is finite and $\phi$ maps closed points to closed points, we obtain that $ \phi$ is finite. Thus, by Zariski's Main Theorem, as $\overline \phi$ is a birational morphism with finite fibers to a normal variety, $\phi$ is an isomorphism to an open subset. But it is also an open immersion, and it is therefore an isomorphism.
\end{proof}

\begin{remark}
    Although the K-moduli description for family 2.25 is already known, we expect that the above computational approach and the developed software \cite{our_code} can be applied in various examples of K-moduli of Fano threefold families that arise as divisors of bidegree $(k,l)$ in products of projective space.
\end{remark}

\printbibliography
\end{document}